\newcommand\gnm{G(n,m)}
\newcommand\Gnm{G'(n,m)}
\newcommand\MPCPS{Mathematical Proceedings of the Cambridge Philosophical Society}
\newcommand\COMB{Combinatorica}
\def\vec#1{\mathchoice{\mbox{\boldmath$\displaystyle#1$}}
{\mbox{\boldmath$\textstyle#1$}}
{\mbox{\boldmath$\scriptstyle#1$}}
{\mbox{\boldmath$\scriptscriptstyle#1$}}}
\newcommand{\dAN}{d_{k,\mathrm{AN}}}
\newcommand{\dk}{d_{k-\mathrm{col}}}
\newcommand{\dfirst}{d_{k,\mathrm{first}}}
\newcommand{\dsecond}{d_{k,\mathrm{second}}}
\newcommand{\dfreeze}{d_{k,\mathrm{freeze}}}
\DeclareMathOperator{\pr}{P}
\newcommand{\qed}{\hfill$\Box$\smallskip}
\newenvironment{proof}{\emph{Proof.}}{}
\newtheorem{definition}{Definition}[section]
\newtheorem{theorem}[definition]{Theorem}
\newtheorem{lemma}[definition]{Lemma}
\newtheorem{proposition}[definition]{Proposition}
\newtheorem{corollary}[definition]{Corollary}
\newcommand\dist{\mbox{dist}}
\newcommand\cA{\mathcal{A}}
\newcommand\cB{\mathcal{B}}
\newcommand\cC{\mathcal{C}}
\newcommand\cE{\mathcal{E}}
\newcommand\cU{\mathcal{U}}
\newcommand\cN{\mathcal{N}}
\newcommand\cS{\mathcal{S}}
\newcommand\cT{\mathcal{T}}
\newcommand\cI{\mathcal{I}}
\newcommand\cM{\mathcal{M}}
\newcommand\cY{\mathcal{Y}}
\newcommand\cV{\mathcal{V}}
\newcommand\cW{\mathcal{W}}
\def\cC{{\mathcal C}}
\def\cE{{\cal E}}
\newcommand\eul{\mathrm{e}}
\newcommand\eps{\varepsilon}
\newcommand\Erw{\mathrm{E}}
\newcommand{\vecone}{\vec{1}}
\newcommand{\Po}{{\rm Po}}
\newcommand{\Bin}{{\rm Bin}}
\newcommand{\bink}[2] {{{#1}\choose {#2}}}
\newcommand\ra{\rightarrow}
\newcommand\bc[1]{\left({#1}\right)}
\newcommand\cbc[1]{\left\{{#1}\right\}}
\newcommand\bcfr[2]{\bc{\frac{#1}{#2}}}
\newcommand\brk[1]{\left\lbrack{#1}\right\rbrack}
\newcommand\norm[1]{\left\|{#1}\right\|}
\newcommand\abs[1]{\left|{#1}\right|}
\newcommand{\Whp}{W.h.p.}
\newcommand{\whp}{w.h.p.}
\newcommand{\Bollobas}{Bollob\'as}
\newcommand{\Luczak}{\L uczak}
\newcommand\Lem{Lemma}
\newcommand\Prop{Proposition}
\newcommand\Thm{Theorem}
\newcommand\Cor{Corollary}
\newcommand\Sec{Section}
\begin{document}

\title{\bf Upper-bounding the $k$-colorability threshold by counting covers}

\author{
Amin Coja-Oghlan\thanks{{\tt acoghlan@math.uni-frankfurt.de}.
	Goethe University, Mathematics Institute, 10 Robert Mayer St, Frankfurt 60325, Germany.
		The research leading to these results has received funding from the European Research Council under the European Union's Seventh Framework
			Programme (FP/2007-2013) / ERC Grant Agreement n.\ 278857--PTCC.}
}
\date{\today}

\maketitle

\begin{abstract}
\noindent
Let $\gnm$ be the random graph on $n$ vertices with $m$ edges.
Let $d=2m/n$ be its average degree.
We prove that $\gnm$ fails to be $k$-colorable \whp\ if $d>2k\ln k-\ln k-1+o_k(1)$.
This  matches a conjecture put forward on the basis of sophisticated but non-rigorous
statistical physics ideas (Krzakala, Pagnani, Weigt: Phys.\ Rev.\ E {\bf70} (2004)).
The proof is based on applying the first moment method to the number of
``covers'', a physics-inspired concept.
By comparison, a standard first moment over the number of $k$-colorings shows that $\gnm$ is not $k$-colorable
\whp\ if $d>2k\ln k-\ln k$.\\

\noindent
\bigskip
\emph{Key words:}	random structures, phase transitions, graph coloring.
\end{abstract}

\section{Introduction}

{\em Let $\gnm$ be the 
 random graph on $V=\cbc{1,\ldots,n}$ with $m$ edges. 
Unless specified otherwise, we let $m=\lceil dn/2\rceil$ for a number
$d>0$ that remains fixed as $n\ra\infty$.
Let $k\geq3$ be an $n$-independent integer.
We say that $\gnm$ has a property $\cE$ {\bf\em with high probability} (`\whp') if $\lim_{n\ra\infty}\pr\brk{\gnm\in\cE}= 1$.}

\bigskip\noindent
One of the longest-standing open problems in the theory of random graphs is whether there is a phase transition for $k$-colorability in $\gnm$ 
and, if so, at what average degree $d$ it occurs~\cite{AchFried,Cheeseman,ER}.
Regarding existence, Achlioptas and Friedgut~\cite{AchFried} proved that for any $k\geq3$ there is a {\em sharp threshold sequence} $\dk(n)$
such that for any fixed $\eps>0$ the random graph $\gnm$ is $k$-colorable \whp\ if $m/n<(1-\eps)\dk(n)$
and non-$k$-colorable \whp\ if $m/n>(1+\eps)\dk(n)$.
To establish the existence of an actual sharp threshold, one would have to show that the sequence $\dk(n)$ converges.
This is widely conjectured to be the case (explicitly so in~\cite{AchFried}) but as yet unproven.

In any case, the techniques used to prove the existence of $\dk(n)$ shed no light on its location.
An upper bound is easily obtained via the {\em first moment method}.
Indeed, a simple calculation shows that for $k\geq3$ and 
	\begin{equation}\label{eqNaiveFirstMoment}
	d>\dfirst=2k\ln k-\ln k,
	\end{equation}
the expected number number of $k$-colorings tends to $0$ as $n\ra\infty$ (e.g., \cite{AchlioptasMolloy}).
Hence, Markov's inequality implies that $\gnm$ fails to be $k$-colorable for $d>\dfirst$ \whp\
Furthermore, Achlioptas and Naor~\cite{AchNaor} used the second moment method to prove that for any $k\geq3$,
$\gnm$ is $k$-colorable \whp\ if
	\begin{equation}\label{eqAN}
	\dk\geq\dAN=2(k-1)\ln(k-1)=2k\ln k-2\ln k-2+o_k(1).
	\end{equation}
Here and throughout the paper, we use the symbol $o_k(1)$ to hide terms that tend to zero for large $k$.
The bound~(\ref{eqAN}) was recently improved~\cite{ACOVilenchik}, also via a second moment argument, for sufficiently large $k$ to
	\begin{equation}\label{eqSecond}
	\dk\geq\dsecond=2k\ln k-\ln k-2\ln2+o_k(1).
	\end{equation}
This leaves an additive gap of $2\ln 2+o_k(1)$ between the upper bound~(\ref{eqNaiveFirstMoment}) and the lower bound~(\ref{eqSecond}).

The problem of $k$-coloring $\gnm$ is closely related to the ``diluted mean-field $k$-spin Potts antiferromagnet'' model of statistical physics.
Indeed, over the past decade physicists have developed sophisticated, albeit mathematically non-rigorous formalisms 
for identifying phase transitions in random discrete structures, the ``replica method'' and the ``cavity method'' (see~\cite{MM} for details and references).
Applied to the problem of $k$-coloring $\gnm$~\cite{KPW,MPWZ,vanMourik,LenkaFlorent}, these techniques lead to the conjecture that
	\begin{equation}\label{eqcavity}
	\dk=2k\ln k-\ln k-1+o_k(1).
	\end{equation}
The main result of the present paper is an improved {\em upper} on $\dk$ that matches the physics prediction~(\ref{eqcavity})
(at least up to the term hidden in the $o_k(1)$).

\begin{theorem}\label{Thm_main}
We have $\dk\leq2k\ln k-\ln k-1+o_k(1)$.
\end{theorem}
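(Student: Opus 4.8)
The plan is to improve on the naive first moment bound (\ref{eqNaiveFirstMoment}) by applying the first moment method not to the number $Z$ of $k$-colorings, but to the number $\hat Z$ of \emph{covers} (also called ``frozen'' or ``core'' configurations in the physics literature). Roughly speaking, a cover is obtained from a $k$-coloring by ``unassigning'' any vertex whose colour could be changed without creating a monochromatic edge, then iterating; the fixpoint of this operation is a partial colouring in which every assigned vertex $v$ has, for each colour $c$ different from its own, a neighbour coloured $c$ that is itself assigned. The key combinatorial point is that every $k$-coloring $\sigma$ projects to exactly one cover $\pi(\sigma)$, so that whenever $\hat Z=0$ we also have $Z=0$. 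Hence $\Erw[\hat Z]\to 0$ implies non-$k$-colorability \whp. The gain over the trivial first moment comes from the fact that covers are far less numerous than colourings: the ``whitening''/unassigning process typically kills an exponential number of colourings down to a single cover, so the entropy term in $\Erw[\hat Z]$ is smaller while the edge-rejection (probability that no edge is monochromatic) is essentially unchanged, which is exactly what shaves off the constant $1$ in (\ref{eqcavity}).

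The steps, in order, would be: \textbf{(i)} give the precise definition of a cover as a map $V\to\{0,1,\ldots,k\}$ (with $0$ meaning ``unassigned'') satisfying the local constraint above, and verify that the whitening operator applied to any proper $k$-colouring converges to a cover, so that the projection $\pi$ is well defined and surjective onto the set of covers reachable from colourings. \textbf{(ii)} Set up $\Erw[\hat Z]=\sum_{\ell}\sum_{\text{patterns with }\ell\text{ assigned vertices}}\pr[\text{pattern is a cover of }\gnm]$, and organise the sum by the fraction $\alpha=\ell/n$ of assigned vertices together with the profile $(\rho_1,\ldots,\rho_k)$ of colour-class sizes among the assigned vertices. \textbf{(iii)} For a fixed pattern, the probability that it is a cover factors (asymptotically, after conditioning on edge counts) into the probability that none of the $m$ random edges is monochromatic — this contributes essentially the same $\exp(-m\,(\text{something}+o(1)))$ as in the ordinary first moment, governed by the colour profile — times the probability that the local ``no vertex can be freed'' constraint holds at every assigned vertex. \textbf{(iv)} Estimate this second factor: for a uniformly symmetric profile each assigned vertex has Poisson-ish numbers of neighbours in each of the other $k-1$ colour classes, all of which must be nonzero among the assigned vertices; this yields a per-assigned-vertex penalty of order $(1-\exp(-d\alpha/k))^{k-1}$, and balancing it against the entropy of choosing which $\alpha n$ vertices are assigned and colouring them gives the optimising $\alpha=\alpha(d,k)$. \textbf{(v)} Show that the dominant contribution comes from the symmetric, near-maximal profile (standard convexity/Laplace arguments), evaluate the resulting one-variable exponential rate $\Phi(d)$ as a function of $d$, and check that $\Phi(d)<0$ — hence $\Erw[\hat Z]\to0$ — precisely when $d>2k\ln k-\ln k-1+o_k(1)$, which by Markov gives $\dk\le 2k\ln k-\ln k-1+o_k(1)$.

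I expect the main obstacle to be step \textbf{(iv)}–\textbf{(v)}: controlling the probability of the ``frozenness'' constraint and, crucially, showing that the asymmetric profiles do not dominate. The frozenness event couples the edge set in a way that is not independent across assigned vertices (an edge between two assigned vertices helps freeze both endpoints), so a naive union/independence bound will over- or under-count; one has to argue, via a careful conditioning on the bipartite-type degree sequences or via a pairing-model computation, that the correlations are of lower order and the leading exponential rate is the product form in \textbf{(iv)}. Equally delicate is verifying that the entropy--energy trade-off is genuinely maximised by the symmetric profile with the right $\alpha$ and that no boundary regime (e.g.\ $\alpha\to0$, which recovers the trivial bound, or $\alpha\to1$) does better for the relevant range of $d$; this requires a real analysis of the rate function $\Phi$, most cleanly done by expanding in $1/k$ and tracking the constant term, which is where the sharp ``$-1$'' emerges.
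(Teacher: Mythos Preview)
Your high-level strategy matches the paper's, but there is a genuine gap that you have not confronted: the \emph{all-zero cover}. With your definition (and with the paper's), the map that sends every vertex to $0$ is a cover of \emph{every} graph, so if $\hat Z$ counts all covers then $\hat Z\ge1$ deterministically and $\Erw[\hat Z]\to0$ is impossible. Your remark in step (v) that the boundary $\alpha\to0$ ``recovers the trivial bound'' is therefore incorrect; at $\alpha=0$ the rate function is not the naive first-moment rate but simply $0$, and the first moment never vanishes. Restricting $\hat Z$ to covers that are actually in the image of the whitening map $\pi$ does fix the logic, but then you must \emph{prove} that valid covers cannot have $\alpha$ small --- and this is not a calculus afterthought but a separate probabilistic argument of roughly the same weight as the first-moment computation itself.

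The paper carries this out as follows. It first shows (its Proposition~\ref{Prop_core}) that \whp\ \emph{every} $k$-coloring $\sigma$ of $\gnm$ gives a cover $\hat\sigma$ with $|\hat\sigma^{-1}(0)|\le nk^{-2/3}$ and near-balanced color classes. This is done by a ``core'' construction: one exhibits, inside each coloring, a large set of vertices that the whitening process provably cannot touch, and one controls the large deviations of this construction tightly enough to beat the exponential number of colorings (Proposition~\ref{Prop_firstMoment} is used here to bound $\Erw[Z]$). Only \emph{then} is the first moment over covers taken, restricted to covers with these structural properties, and the optimization over $\alpha_0\in[0,k^{-2/3}]$ yields the ``$-1$''.

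Two smaller points. First, the paper's stability notion requires \emph{two} neighbors of each other color, not one; this changes the per-vertex penalties (Poisson $\ge2$ rather than $\ge1$) and feeds into the constant, and it also gives the cover a robustness that is convenient for the core argument. Second, the paper's cover definition includes a constraint on the $0$-vertices as well (its condition CV3), which produces a nontrivial factor $p_0$ in the first moment and is essential for the optimization over $\alpha_0$ to land where it does. Your sketch omits both of these ingredients; without them the analysis in steps (iv)--(v) will not produce the claimed constant even after the all-zero issue is handled.
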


\Thm~\ref{Thm_main} improves the naive first moment bound~(\ref{eqNaiveFirstMoment}) by about an additive $1$.
This proves, perhaps surprisingly, that the $k$-colorability threshold  (if it exists) does {\em not} coincide with the first moment bound.
Furthermore, \Thm~\ref{Thm_main} narrows the gap to the lower bound~(\ref{eqSecond}) to $2\ln2-1+o_k(1)\approx0.39$.

The proof of \Thm~\ref{Thm_main} is based on a concept borrowd from the ``cavity method'',
namely the notion of {\em covers}.
This concept is closely related to hypotheses on the ``geometry'' of the set of $k$-colorings of the random graph,
which are at the core of the cavity method~\cite{LenkaFlorent,MM,pnas,KPW,MPWZ,LenkaThesis}.
More precisely, let $\cS_k(\gnm)\subset\cbc{1,\ldots,k}^n$ be the set of all $k$-colorings of $\gnm$.
According to the cavity method, for average degrees $(1+o_k(1))k\ln k<d<\dk$
\whp\ the set $\cS_k(\gnm)$ has a decomposition
	$$\cS_k(\gnm)=\bigcup_{i=1}^N\cC_i$$
into $N=\exp(\Omega(n))$ non-empty ``clusters''  $\cC_i$ such that for any two colorings $\sigma,\tau$ that belong to distinct clusters we have
	$$\dist(\sigma,\tau)=\abs{\cbc{v\in V:\sigma(v)\neq\tau(v)}}\geq\delta n\qquad\mbox{for some }\delta=\delta(k,d)>0.$$
In other words, the clusters are well-separated.
Furthermore, a ``typical'' cluster $\cC_i$ is characterized by a set of $\Omega(n)$ ``frozen'' vertices,
which have the same color in all colorings $\sigma\in\cC_i$.
Roughly speaking, a cover is a representation of a cluster $\cC_i$:
the cover details the colors of all the frozen vertices, while the non-frozen ones 
are represented by the ``joker color'' $0$.
We will define covers precisely in \Sec~\ref{Sec_Covers}.

The key idea behind the proof of \Thm~\ref{Thm_main} is to apply the first moment method to the number of covers.
Since, according to the cavity method, covers are in one-to-one correspondence with clusters,
we carry effectively out a first moment argument for the number of {\em clusters}.
The improvement over the ``classical'' first moment bound for the number of $k$-colorings results because
this approach allows us to completely ignore the cluster sizes $|\cC_i|$.
Indeed, close to the $k$-colorability threshold the cluster sizes are conjectured to vary wildly, as has  in part been
established rigorously in~\cite{ACOVilenchik}.
By contrast, the ``classical'' first moment argument amounts to putting a rather generous uniform bound on all the cluster sizes.

The ``freezing'' of vertices in $k$-colorings of $\gnm$ has been studied previously~\cite{Barriers}.
Formally, let us call a set $F$ of vertices {\bf\em $\delta$-frozen} in a $k$-coloring $\sigma$ of $\gnm$ 
if any other $k$-coloring $\tau$ such that $\tau(v)\neq\sigma(v)$ for some vertex $v\in F$ indeed satisfies
	$\abs{\cbc{v\in F:\sigma(v)\neq\tau(v)}}\geq\delta n.$
There is an explicitly known sharp threshold $\dfreeze=(1+o_k(1))k\ln k$, about half of $\dk$, such that for $d>\dfreeze$ \whp\ a  random $k$-coloring of $\gnm$ has
$\Omega(n)$ frozen vertices~\cite{Molloy}.
The threshold $\dfreeze$ coincides asymptotically with the largest average degree for which efficient algorithms are known to
find a $k$-coloring of $\gnm$ \whp~\cite{AchMolloy,GMcD}.
In fact, it has been hypothesized that the emergence of frozen vertices causes the failure of a wide class of ``local search'' algorithms~\cite{Barriers,Molloy}.

Yet the known results~\cite{Barriers,Molloy} on the freezing phenomenon only show that a {\em random} $k$-coloring of $\gnm$ ``freezes''.
It is not apparent that this poses an obstacle if we merely aim to find {\em  some} $k$-coloring.
As an important part of the proof of \Thm~\ref{Thm_main},
 we show that for $d$ close to $\dk$ (but strictly below the lower bound~(\ref{eqSecond})),
 in fact {\em all} $k$-colorings of $\gnm$ belong to a cluster with many frozen vertices \whp\

\begin{corollary}\label{Cor_allFrozen}
Assume that $d\geq 2k\ln k-\ln k-4+o_k(1)$.
There is a number $\delta_k>0$ such that \whp\ every $k$-coloring $\sigma$ of the random graph $\gnm$ 
has a set $F(\sigma)$ of $\delta_k$-frozen vertices of size $|F(\sigma)|\geq(1-o_k(1))n$.
\end{corollary}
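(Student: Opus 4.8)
The plan is to derive the corollary from (a refinement of) the first-moment argument for covers that underlies \Thm~\ref{Thm_main}, rather than from any direct analysis of the coloring landscape. The key observation is that the cover construction associates to \emph{every} $k$-coloring $\sigma$ of $\gnm$ a cover $\hat\sigma$, obtained by iteratively replacing a vertex's color by the joker color $0$ whenever that vertex could be recolored without forcing large-scale rearrangements, and iterating this ``whitening''/coarsening process to a fixed point. The set of vertices that retain a genuine color in $\hat\sigma$ is then, essentially by construction, a set of frozen vertices of $\sigma$. So Corollary~\ref{Cor_allFrozen} reduces to the claim that \whp\ every cover arising this way has at most $o_k(1)\cdot n$ joker vertices, i.e.\ the frozen set has size $(1-o_k(1))n$.

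The steps I would carry out, in order, are as follows. First, recall from \Sec~\ref{Sec_Covers} the precise definition of covers and the whitening map $\sigma\mapsto\hat\sigma$, and record the deterministic fact that the non-joker vertices of $\hat\sigma$ form a $\delta_k$-frozen set in $\sigma$ for a suitable constant $\delta_k>0$; this is where the separation parameter $\delta$ enters. Second, stratify covers by the number $\alpha n$ of joker vertices and bound the expected number of covers with exactly $\alpha n$ jokers: this is a first-moment computation very similar to the one proving \Thm~\ref{Thm_main}, but now keeping track of $\alpha$ as an explicit parameter. A joker vertex is one that has, in a sense made precise by the cover definition, at least one available color in $\{1,\dots,k\}$ under every neighboring assignment; heuristically this costs a factor like $(1-(1-1/k)^{\text{(typical degree)}})$ per joker relative to a frozen vertex, which at $d\approx 2k\ln k$ is roughly $k^{-2}\cdot\mathrm{poly}(k)$. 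Third, multiply the entropy term $\binom{n}{\alpha n}$ by the resulting weight and check that the exponential rate, as a function of $\alpha$, is negative for all $\alpha$ bounded away from $0$ once $d\geq 2k\ln k-\ln k-4+o_k(1)$; the slack of $4$ versus the $1$ in \Thm~\ref{Thm_main} is exactly the room needed to kill covers with a small but linear number of jokers. Summing over the $O(n)$ values of $\alpha$ and applying Markov's inequality then shows that \whp\ no $k$-coloring whitens to a cover with $\geq o_k(1)\cdot n$ jokers, which is the corollary.

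The main obstacle is the third step: controlling the first-moment rate \emph{uniformly in $\alpha$}, and in particular near $\alpha=0$. For $\alpha$ of order $1$ the computation is a mild variant of the proof of \Thm~\ref{Thm_main}, but as $\alpha\to 0$ the binomial entropy $\binom{n}{\alpha n}$ behaves like $(\eul/\alpha)^{\alpha n}$, which blows up relative to any fixed per-joker weight $w=w(k)<1$; one must therefore show that the per-joker weight itself decays fast enough — essentially that $w\leq\exp(-\Omega_k(1))$ with the implied constant large enough to beat $\ln(1/\alpha)$ down to $\alpha$ of the required size $o_k(1)$. Making this quantitative requires a careful estimate of the probability that a vertex is ``free'' given the cover structure on its neighborhood, i.e.\ an analysis of the local weight function for covers analogous to Achlioptas--Naor's treatment of colorings, and it is the place where the precise exponent in ``$o_k(1)$'' and in $\delta_k$ gets pinned down. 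A secondary technical point is verifying the deterministic claim in the first step — that whitening really does produce a frozen set and not merely a set that is frozen ``on average'' — which needs the cluster-separation property to be threaded through the iterative definition of the cover; I would handle this by induction on the whitening steps, showing that at each stage any recoloring that changes a still-colored vertex must disagree with $\sigma$ on $\geq\delta n$ vertices.
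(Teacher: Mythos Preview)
Your approach differs substantially from the paper's, and the step you flag as ``secondary'' is in fact a genuine gap.

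The paper does \emph{not} derive the corollary from a first moment over covers. Instead it runs a stronger ``core'' process {\bf CR1}--{\bf CR3} (\Sec~\ref{Sec_Prop_core}) that iteratively removes vertices with fewer than $\ell=\exp(-7)\ln k$ surviving neighbours of some colour. Large-deviation estimates (\Lem s~\ref{Lem_size_of_W}, \ref{Lem_size_of_U}) give failure probability $\leq\exp(-10n/k)$ per coloring for the core to have size $(1-o_k(1))n$; combined with the first moment bound $\Erw[Z]\leq\exp((c/(2k)+o_k(1/k))n)$ on the number of \emph{colorings} (\Prop~\ref{Prop_firstMoment}) and $c\leq4$, a union bound over colorings shows that \whp\ every coloring has a large core. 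The frozen property then follows from the expansion fact (\Lem~\ref{Lemma_expansion}) that in $\gnm$ every set of size $\leq\delta n$ spans fewer than $\frac{\ell}{2}$ edges per vertex: a recoloring that alters any core vertex forces the changed set $\Delta$ to have minimum degree $\ell$ inside the core, hence $|\Delta|\geq\delta n$.

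The gap in your plan is that the cover definition only guarantees each non-joker vertex \emph{two} non-joker neighbours of each other colour, so your recoloring argument yields only minimum degree $2$ in $\Delta$, i.e.\ $e(\Delta)\geq|\Delta|$. But $\gnm$ \whp\ contains short cycles, so the analogue of \Lem~\ref{Lemma_expansion} with $\ell$ replaced by $2$ is simply false; the union bound over small sets does not close. Your proposed fix --- ``induction on the whitening steps'' --- is a statement about how $\hat\sigma$ is built from $\sigma$, not about what \emph{other} colorings $\tau$ can look like, and does not touch this issue. The choice $\ell=\Theta(\ln k)$ in the core is precisely what makes the expansion argument survive the union bound. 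As a secondary point, your Steps~2--3 would need the class-size constraints {\bf Z1}--{\bf Z3} that the paper's cover first moment (\Prop~\ref{Prop_noCover}) assumes; but those constraints are themselves established via the core construction (\Prop~\ref{Prop_core}), which already delivers the corollary --- so the route you sketch is, at best, circular.
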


Due to the (conjectured) relationship between freezing and the demise of local-search algorithms,
it would be interesting to identify the precise threshold where {\em all} the $k$-colorings of $\gnm$ are frozen.

\paragraph{Further related work.}

The problem of coloring $\gnm$ has been studied intensively over the past few decades.
Improving a prior result by Matula~\cite{Matula},
\Bollobas~\cite{BBColor} determined the asymptotic value of the chromatic number of dense random graphs.
\Luczak\ extended this result to sparse random graphs~\cite{LuczakColor}.
In the case that $d$ remains fixed as $n\ra\infty$, his result yields $\dk=(2+o_k(1))k\ln k$.
As mentioned above, Achlioptas and Naor~\cite{AchNaor} improved this result by obtaining the lower bound~(\ref{eqAN}).
In addition, \Luczak's result was sharpened in~\cite{Angelika} for $m\ll n^{5/4}$.

The problem of locating the threshold for $3$-colorability has received considerable attention as well.
The best current lower bound is $4.03$~\cite{AMo}.
Moreover, Dubois and Mandler~\cite{DuboisMandler} proved that $d_{3-\mathrm{col}}\leq4.9364$.
This improved over a stream of prior results~\cite{AchlioptasMolloy,DunneZito,Fountoulakis,Giotis,KKS}.

The key idea in this line of work is to estimate the first moment of the number of ``rigid''
colorings: for any two colors $1\leq i<j\leq k$, every vertex of color $i$ must have neighbors of color $j$~\cite{AchlioptasMolloy}.
Clearly, any $k$-colorable graph must have a rigid $k$-coloring.
At the same time, the number of rigid $k$-colorings can be expected
to be significantly smaller than the total number of $k$-colorings, and thus one might expect an improved first-moment upper bound.
However, in terms of the clustering scenario put forward by physicists, it is conceivable that many clusters contain
a large (in fact, exponentially large) number of rigid $k$-colorings.
Therefore, the idea of counting rigid $k$-colorings seems conceptually weaker than the approach of counting clusters
pursued in the present work.
In fact, the improvement obtained by counting rigid colorings appears to diminish for larger $k$~\cite{AchlioptasMolloy}.

A fairly new approach to obtaining upper bounds on thresholds in random constraint satisfaction problems is the
use of the {\em interpolation method}~\cite{BGT10,FranzLeone,Guerra,Panchenko}.
This technique gives an upper bound on, e.g., the $k$-colorability threshold in terms of a variational problem that is related to the
statistical mechanics techniques.
However, this variational problem appears to be difficult to solve.
Thus, it is not clear (to me) how an {\em explicit} upper bound as stated in \Thm~\ref{Thm_main}
can be obtained from  the interpolation method.

Dani, Moore and Olson~\cite{Dani} studied a variant of the graph coloring problem in which each pair of $(u,v)$ of vertices comes with
a random permutation $\pi_{u,v}$ of the $k$ possible colors;
this gives rise to a concept of ``permuted'' $k$-colorings.
They obtained an upper bound of $2k\ln k-\ln k-1+o_k(1)$ on the threshold for the existence of permuted $k$-colorings.
The proof is based on counting the total weight of $k$-colorings and using an isoperimetric inequality.
Moreover, as pointed out in~\cite{Dani}, physics intuition suggests that the threshold in the permuted $k$-coloring problem matches the
``unpermuted'' $k$-colorability threshold.

In the context of satisfiability, Maneva and Sinclair~\cite{Maneva} used the concept of covers to obtain a conditional upper bound on the random $3$-SAT threshold.
Roughly speaking, the condition that they need is that \whp\ all satisfying assignments have frozen variables.
However, verifying this condition in random 3-SAT is an open problem.
(That said, it is conceivable that the approach used in~\cite{Maneva} might yield a better upper bound on the $k$-SAT threshold for large $k$.)

\section{Preliminaries}

Let $\brk k=\cbc{1,2,\ldots,k}$.
Because \Thm~\ref{Thm_main} and \Cor~\ref{Cor_allFrozen} are asymptotic statements in both $k$ and $n$, we may
generally assume that $k\geq k_0$ and $n\geq n_0$, where $k_0,n_0$ are constants that are chosen sufficiently large
for the various estimates to hold.

We perform asymptotic considerations with respect to both $k$ and $n$.
When referring to asymptotics in $k$, we use the notation $O_k(\cdot)$, $o_k(\cdot)$, etc.
Asymptotics with respect to $n$ are just denoted by $O(\cdot)$, $o(\cdot)$, etc.

If $G$ is a (multi-)graph and $A,B$ are sets of vertices, then we let $e_G(A,B)$ denote the number of $A$-$B$-edges in $G$.
Moreover, $e_G(A)$ denotes the number of edges inside of $A$.
If $A=\cbc v$ is a singleton, we just write $e_G(v,B)$.
The reference to $G$ is omitted where it is clear from the context.

\paragraph{Working with independent edges.}

The random graph $\gnm$ consists of $m$ edges that are chosen \emph{almost} independently.
To simplify some of the arguments below, we are going to work with a random multi-graph model $G'(n,m)$ in which
edges are perfectly independent.
More precisely, $G'(n,m)$ is obtained as follows:
	let $\vec e=(\vec e_1,\ldots,\vec e_m)\in(V\times V)^m$ be a uniformly random $m$-tuple of ordered pairs of vertices.
In other words, each $\vec e_i$ is chosen uniformly out of all $n^2$ possible vertex pairs, independently of all the others.
Now, let $G'(n,m)$ be the random multi-graph comprising of $\vec e_1,\ldots,\vec e_m$ viewed as undirected edges.
Thus, $G'(n,m)$ may have self-loops (if $\vec e_i=(v,v)$ for some index $i$) as well as multiple edges (if, for example, $\vec e_i=(u,v)$ and $\vec e_j=(v,u)$
with $1\leq i<j\leq m$ and $u\neq v$).
The two random graph models are related as follows.

\begin{lemma}\label{Lemma_independentEdges}
For any event $\cA$ we have
	$\pr\brk{\gnm\in\cA}\leq O(1)\cdot\pr\brk{G'(n,m)\in\cA}$.
\end{lemma}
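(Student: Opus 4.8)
The plan is to obtain $\gnm$ from $G'(n,m)$ by conditioning on the event $\cS$ that the multigraph it produces happens to be simple, i.e., has neither self-loops nor multiple edges. The first step is to verify that, conditioned on $\cS$, the multigraph $G'(n,m)$ is distributed exactly like $\gnm$. To see this, fix any simple graph $H$ on the vertex set $V$ with precisely $m$ edges. A tuple $\vec e=(\vec e_1,\ldots,\vec e_m)\in(V\times V)^m$ gives rise to $H$ if and only if the $m$ unordered pairs $\vec e_1,\ldots,\vec e_m$ are exactly the edges of $H$, each occurring once; the number of such tuples is $2^m\cdot m!$ (choose a bijection from $\brk m$ onto the edge set of $H$, and an orientation of each edge). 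Since this count is the same for every simple $H$ with $m$ edges, the conditional law of $G'(n,m)$ given $\cS$ is uniform on simple graphs with $m$ edges, which is precisely the law of $\gnm$.

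Consequently, for any event $\cA$,
	$$\pr\brk{G'(n,m)\in\cA}\geq\pr\brk{G'(n,m)\in\cA\cap\cS}=\pr\brk{\gnm\in\cA}\cdot\pr\brk{\cS},$$
and the lemma follows once we show that $\pr\brk{\cS}=\Omega(1)$ (recall that $d$, and thus $m/n$, is fixed as $n\ra\infty$). This is the only substantive point. I would bound $\pr\brk{\cS}$ from below by controlling the number $X$ of ``defects'': the number of indices $i$ with $\vec e_i$ a self-loop, plus the number of pairs $i<j$ for which $\vec e_i$ and $\vec e_j$ induce the same edge. Since a single $\vec e_i$ is a loop with probability $1/n$ and a fixed pair $\vec e_i,\vec e_j$ induces the same edge with probability at most $2/n^2$, linearity of expectation yields $\Erw\brk{X}\leq m/n+\binom{m}{2}\cdot2/n^2=O(1)$; moreover $X$ is asymptotically Poisson (a bounded number of asymptotically independent rare events), so $\pr\brk{\cS}=\pr\brk{X=0}$ stays bounded away from $0$. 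Even without appealing to the Poisson limit, a routine second-moment estimate $\Erw\brk{X^2}=O(1)$ together with the Paley--Zygmund inequality---or, equivalently, the classical computation showing that the configuration model produces a simple graph with probability $\Omega(1)$---gives $\pr\brk{\cS}=\Omega(1)$.

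Rearranging the displayed inequality then gives $\pr\brk{\gnm\in\cA}=\pr\brk{G'(n,m)\in\cA}/\pr\brk{\cS}\leq O(1)\cdot\pr\brk{G'(n,m)\in\cA}$, as desired. The only real obstacle is the lower bound on $\pr\brk{\cS}$, and that is entirely standard---it is the same calculation underlying the well-known asymptotic equivalence of the configuration model and $G(n,m)$---so I expect no genuine difficulty, merely the care needed to account for the self-loop and multi-edge contributions separately and to extract an honest positive constant.
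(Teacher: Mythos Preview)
Your approach is essentially the paper's: condition on the event that $G'(n,m)$ is simple (the paper calls it $\cE$), observe that the conditional law equals that of $\gnm$, and then lower-bound the probability of simplicity. For that last step the paper uses the direct product
\[
\pr[\cS]\ \geq\ \prod_{i=0}^{m-1}\Bigl(1-\tfrac{2i+n}{n^2}\Bigr)\ \geq\ \exp(-d-2d^2),
\]
which is shorter and yields an explicit constant; one small slip in your write-up is that Paley--Zygmund lower-bounds $\pr[X>0]$, not $\pr[X=0]$, so that alternative does not give what you need---your Poisson-limit argument (or the product above) does.
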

\begin{proof}
The random graph $G'(n,m)$ has \emph{at most} $m$ distinct edges, and no self-loops.
Let $\cE$ be the event that it has \emph{exactly} $m$ edges.
This is the case iff $\vec e_1,\ldots,\vec e_m$ induce pairwise distinct undirected edges.
Given the event $\cE$, $G'(n,m)$ is identical to $G(n,m)$.
Hence,
	\begin{equation}\label{eqindependence1}
	\pr\brk{\gnm\in\cA}=\pr\brk{G'(n,m)\in\cA|\cE}\leq\pr\brk{G'(n,m)\in\cA}/\pr\brk\cE
	\end{equation}
Now,
	\begin{eqnarray*}
	\pr\brk\cE&\geq&\prod_{i=0}^{m}\bc{1-\frac{2i+n}{n^2}}=\exp\brk{\sum_{i=0}^{m-1}\ln(1-(2i+n)/n^2)}\geq\exp(-d-2d^2)=\Omega(1).
	\end{eqnarray*}
Thus, the assertion follows from~(\ref{eqindependence1}).
\qed\end{proof}

\paragraph{The Chernoff bound.}

We need the following Chernoff bound on the tails of a binomially distributed random variable (e.g.,~\cite[p.~21]{JLR}).

\begin{lemma}\label{Lemma_Chernoff}
Let $\varphi(x)=(1+x)\ln(1+x)-x$.
Let $X$ be a binomial random variable with mean $\mu>0$.
Then for any $t>0$ we have
	\begin{eqnarray*}
	\pr\brk{X>\Erw\brk X+t}&\leq&\exp(-\mu\cdot\varphi(t/\mu)),\\
	\pr\brk{X<\Erw\brk X-t}&\leq&\exp(-\mu\cdot\varphi(-t/\mu)).
	\end{eqnarray*}
In particular, for any $t>1$ we have
	$\pr\brk{X>t\mu}\leq\exp\brk{-t\mu\ln(t/\eul)}.$
\end{lemma}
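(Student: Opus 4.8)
The plan is the standard exponential-moment (``Chernoff'') method, with the free parameter $\lambda$ optimised by hand. Write $X=\sum_{i=1}^N X_i$ as a sum of independent Bernoulli variables with $\pr\brk{X_i=1}=p$, so that $\mu=Np$. For the upper tail, fix $\lambda>0$ and apply Markov's inequality to $\exp(\lambda X)$:
\[
\pr\brk{X>\mu+t}\leq\pr\brk{\exp(\lambda X)\geq\exp(\lambda(\mu+t))}\leq\exp(-\lambda(\mu+t))\cdot\Erw\brk{\exp(\lambda X)}.
\]
By independence $\Erw\brk{\exp(\lambda X)}=(1-p+p\exp(\lambda))^N=(1+p(\exp(\lambda)-1))^N$, and since $1+x\leq\exp(x)$ this is at most $\exp(Np(\exp(\lambda)-1))=\exp(\mu(\exp(\lambda)-1))$. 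Hence
\[
\pr\brk{X>\mu+t}\leq\exp\bc{\mu(\exp(\lambda)-1)-\lambda(\mu+t)}.
\]

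Next I would optimise over $\lambda$. The exponent $g(\lambda)=\mu(\exp(\lambda)-1)-\lambda(\mu+t)$ is convex and minimised where $\exp(\lambda)=1+t/\mu$, i.e.\ at $\lambda=\ln(1+t/\mu)>0$; substituting this value, the exponent becomes $t-(\mu+t)\ln(1+t/\mu)$, and writing $\mu+t=\mu(1+t/\mu)$ this equals $-\mu\brk{(1+t/\mu)\ln(1+t/\mu)-t/\mu}=-\mu\varphi(t/\mu)$, which is the first bound. For the lower tail I run the mirror-image argument with $\exp(-\lambda X)$ in place of $\exp(\lambda X)$, $\lambda>0$, which gives $\pr\brk{X<\mu-t}\leq\exp\bc{\mu(\exp(-\lambda)-1)+\lambda(\mu-t)}$. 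If $t\geq\mu$ the left-hand side vanishes (as $X\geq0$) and there is nothing to prove; otherwise the exponent is minimised at $\lambda=-\ln(1-t/\mu)>0$, and plugging in gives $-t-(\mu-t)\ln(1-t/\mu)=-\mu\varphi(-t/\mu)$ by the analogous algebra.

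Finally, for the ``in particular'' clause with $t>1$, I set $t\mu=\mu+s$ with $s=(t-1)\mu>0$ and feed this into the upper-tail bound, obtaining $\pr\brk{X>t\mu}\leq\exp(-\mu\varphi(t-1))=\exp(-\mu(t\ln t-t+1))$ since $\varphi(t-1)=t\ln t-t+1$. As $t\ln t-t+1\geq t\ln t-t=t\ln(t/\eul)$, this is at most $\exp(-t\mu\ln(t/\eul))$, as claimed. There is no genuine obstacle here: the statement is classical (and the paper in fact just cites~\cite[p.~21]{JLR}); the only things to be careful about are the two algebraic substitution steps — checking that the optimised exponent is exactly $-\mu\varphi(\pm t/\mu)$ with $\varphi(x)=(1+x)\ln(1+x)-x$ — and the degenerate range $t\geq\mu$ in the lower tail, where the probability is $0$.
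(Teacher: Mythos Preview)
Your argument is correct in all details: the optimisation of the exponential moment yields exactly $-\mu\varphi(\pm t/\mu)$, the degenerate lower-tail case $t\geq\mu$ is handled, and the ``in particular'' clause follows from $\varphi(t-1)=t\ln t-t+1\geq t\ln(t/\eul)$. The paper itself gives no proof of this lemma (it simply quotes \cite[p.~21]{JLR}), so there is nothing to compare against; your write-up is the standard derivation and is fine as is.
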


\paragraph{Balls and bins.}
Consider a balls and bins experiment where $\mu$ balls
are thrown independently and uniformly at random into $\nu$ bins.
Thus, the probability of each distribution of balls into bins equals $\nu^{-\mu}$.
We will need the following well-known ``Poissonization lemma'' (e.g., \cite[Section~2.6]{Durrett}).

\begin{lemma}\label{Lemma_BallsAndBins}
In the above experiment let $e_i$ be the number of balls in bin $i\in\brk\nu$.
Moreover, let $\lambda>0$ and let $(b_i)_{i\in\brk\nu}$ be a family of independent Poisson variables, each with mean $\lambda$.
Then for any sequence $(t_i)_{i\in\brk\nu}$ of non-negative integers such that $\sum_{i=1}^\nu t_i=\mu$ we have
	$$\pr\brk{\forall i\in\brk\nu:e_i=t_i}=\pr\bigg[\forall i\in\brk\nu:b_i=t_i\,\bigg|\,\sum_{i=1}^\nu b_i=\mu\bigg].$$
Hence, the joint distribution of $(e_i)_{i\in\brk\nu}$ coincides with the joint distribution of $(b_i)_{i\in\brk\nu}$
given $\sum_{i=1}^\nu b_i=\mu$.
\end{lemma}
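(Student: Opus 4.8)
The plan is to evaluate both sides of the claimed identity explicitly and to observe that they agree, with the common value being independent of $\lambda$. First I would compute the left-hand side: when $\mu$ distinguishable balls are thrown into $\nu$ bins, the number of outcomes in which bin $i$ receives exactly $t_i$ balls is the multinomial coefficient $\mu!/\prod_{i=1}^\nu t_i!$, and each individual outcome occurs with probability $\nu^{-\mu}$; hence
\[
\pr\brk{\forall i\in\brk\nu:e_i=t_i}=\frac{\mu!}{\nu^\mu\prod_{i=1}^\nu t_i!}.
\]

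Next I would evaluate the right-hand side using independence of the $b_i$ together with the fact that a sum of independent Poisson variables is again Poisson, so that $\sum_{i=1}^\nu b_i$ has distribution $\Po(\nu\lambda)$. Using $\sum_{i=1}^\nu t_i=\mu$ this gives
\[
\pr\brk{\forall i\in\brk\nu:b_i=t_i}=\prod_{i=1}^\nu\eul^{-\lambda}\frac{\lambda^{t_i}}{t_i!}=\eul^{-\nu\lambda}\frac{\lambda^\mu}{\prod_{i=1}^\nu t_i!},\qquad\pr\brk{\textstyle\sum_{i=1}^\nu b_i=\mu}=\eul^{-\nu\lambda}\frac{(\nu\lambda)^\mu}{\mu!}.
\]
Dividing the first quantity by the second, the factors $\eul^{-\nu\lambda}$ and $\lambda^\mu$ cancel, and what remains is $\mu!/(\nu^\mu\prod_{i=1}^\nu t_i!)$, which matches the left-hand side.

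Finally, the ``hence'' assertion follows because two probability distributions on the countable set of $\nu$-tuples of non-negative integers coincide precisely when they assign the same mass to every such tuple: the vector $(e_i)_{i\in\brk\nu}$ is supported on tuples with $\sum_{i=1}^\nu t_i=\mu$, conditioning on $\sum_{i=1}^\nu b_i=\mu$ confines $(b_i)_{i\in\brk\nu}$ to that same support, and there the two have just been shown to agree. I do not expect any genuine obstacle here; the only point deserving a moment's attention is that the common value is free of $\lambda$, which is exactly what makes the lemma useful as a device for replacing dependent bin occupancies by independent Poisson variables.
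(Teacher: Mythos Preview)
Your argument is correct: the direct computation of both sides, yielding the multinomial probability $\mu!/(\nu^\mu\prod_i t_i!)$, is the standard proof of this well-known Poissonization identity. The paper does not actually prove this lemma but simply cites it from Durrett's textbook, so there is no in-paper proof to compare against; your write-up would serve perfectly well as a self-contained verification.
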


We are typically going to use \Lem~\ref{Lemma_BallsAndBins} to obtain an {\em upper} bound on the probability on the left hand side.
Therefore, the following simple corollary will come in handy.

\begin{corollary}\label{Cor_BallsAndBins}
With the notation of \Lem~\ref{Lemma_BallsAndBins}, assume that $\lambda=\mu/\nu>0$.
Then for any sequence $(t_i)_{i\in\brk\nu}$ of non-negative integers such that $\sum_{i=1}^\nu t_i=\mu$ we have
	$$\pr\brk{\forall i\in\brk\nu:e_i=t_i}\leq O(\sqrt\mu)\cdot \pr\brk{\forall i\in\brk\nu:b_i=t_i}.$$
\end{corollary}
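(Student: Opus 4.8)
The plan is to deduce Corollary~\ref{Cor_BallsAndBins} from Lemma~\ref{Lemma_BallsAndBins} by controlling the conditioning event $\{\sum_{i=1}^\nu b_i=\mu\}$. Since the $b_i$ are independent Poisson variables with mean $\lambda=\mu/\nu$, their sum $\sum_{i=1}^\nu b_i$ is a Poisson variable with mean $\nu\lambda=\mu$. Starting from the identity in Lemma~\ref{Lemma_BallsAndBins}, I would write
	$$\pr\brk{\forall i\in\brk\nu:e_i=t_i}=\frac{\pr\brk{\forall i\in\brk\nu:b_i=t_i}}{\pr\brk{\sum_{i=1}^\nu b_i=\mu}},$$
using that the event $\{\forall i:b_i=t_i\}$ already forces $\sum_i b_i=\sum_i t_i=\mu$, so the conditional probability is just the ratio of the unconditional probability to $\pr[\sum_i b_i=\mu]$. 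Thus everything reduces to the lower bound
	$$\pr\brk{\sum_{i=1}^\nu b_i=\mu}=\eul^{-\mu}\frac{\mu^\mu}{\mu!}=\Omega(\mu^{-1/2}),$$
which is exactly the statement that a Poisson$(\mu)$ variable takes its (near-)modal value with probability of order $\mu^{-1/2}$.

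The one remaining point is to justify that lower bound, which is an immediate consequence of Stirling's formula: $\mu!=\sqrt{2\pi\mu}\,(\mu/\eul)^\mu(1+O(1/\mu))$, so $\eul^{-\mu}\mu^\mu/\mu!=(2\pi\mu)^{-1/2}(1+O(1/\mu))\geq c/\sqrt\mu$ for a suitable constant $c>0$ and all $\mu\geq1$. (The case $\mu=0$ is trivial since then all $t_i=0$ and both sides equal $1$; alternatively one simply assumes $\mu\geq1$, which is the only case of interest.) Plugging this into the displayed identity yields
	$$\pr\brk{\forall i\in\brk\nu:e_i=t_i}\leq c^{-1}\sqrt\mu\cdot\pr\brk{\forall i\in\brk\nu:b_i=t_i}=O(\sqrt\mu)\cdot\pr\brk{\forall i\in\brk\nu:b_i=t_i},$$
as claimed.

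There is no real obstacle here: the corollary is a routine "depoissonization" estimate, and the only ingredient beyond Lemma~\ref{Lemma_BallsAndBins} is the elementary fact that a Poisson variable with integer mean $\mu$ puts mass $\Omega(\mu^{-1/2})$ on the value $\mu$. If one wanted to avoid invoking Stirling's formula, an alternative is to note that the mode of Poisson$(\mu)$ is at $\mu$ and that the probability mass function decays like a Gaussian with variance $\mu$ near the mode, so that a window of width $O(\sqrt\mu)$ around $\mu$ carries constant mass and hence the single most likely value carries mass $\Omega(\mu^{-1/2})$; but the direct Stirling computation is shortest. \qed
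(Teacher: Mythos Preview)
Your proof is correct and follows essentially the same approach as the paper: apply Lemma~\ref{Lemma_BallsAndBins}, observe that $\sum_i b_i$ is Poisson with mean $\mu$, and use Stirling's formula to get $\pr[\sum_i b_i=\mu]=\Omega(\mu^{-1/2})$. The paper's argument is a terse version of exactly what you wrote.
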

\begin{proof}
Let $b=\sum_{i=1}^\nu b_i=\mu$.
Since the $b_i$ are independent Poisson variables with means $\lambda=\mu/\nu$, $b$ is Poisson with mean $\mu$.
By Stirling's formula,
	$\pr\brk{b=\mu}=\mu^\mu\exp(-\mu)/\mu!=\Omega(\mu^{-1/2})$.
Hence, \Lem~\ref{Lemma_BallsAndBins} yields
	\begin{eqnarray*}
	\pr\brk{\forall i\in\brk\nu:e_i=t_i}&=&\frac{\pr\brk{\forall i\in\brk\nu:b_i=t_i}}{\pr\brk{b=\mu}}
		=O(\sqrt\mu)\cdot\pr\brk{\forall i\in\brk\nu:b_i=t_i},
	\end{eqnarray*}
as claimed.
\qed\end{proof}

\section{Covers}\label{Sec_Covers}

Let $G=(V,E)$ be a graph, let $k\geq k_0$ be an integer, and let $\sigma:V\ra\brk k$ be a $k$-coloring of $G$.
We would like to identify a set $F\subset V$ of vertices whose colors cannot be changed easily by a ``local'' recoloring of a few vertices.
For instance, if $v$ is a vertex that does not have a neighbor of color $j$ for some $j\in\brk k\setminus\cbc{\sigma(v)}$, then
$v$ can be recolored easily.
More generally, we would like to say that, recursively, a vertex can be recolored easily if there is a color $j$ such that all its neighbors of color $j$
can be easily recolored.
To formalize this, we need the following concept.

\begin{definition}
Let $\zeta:V\ra\cbc{0,1,\ldots,k}$.
We call $v\in V$ {\bf\em stable} under $\zeta$ if $\zeta(v)\neq0$ and if for any color $j\in\brk k\setminus\cbc{\zeta(v)}$ there are
at least two neighbors $u_1,u_2$ of $v$ such that $\zeta(u_1)=\zeta(u_2)=j$.
\end{definition}

Now, consider the following 
{\bf\em whitening process} that, given a $k$-coloring $\sigma$ of $G$, returns a map $\hat\sigma:V\ra\cbc{0,1,\ldots,k}$;
the idea is that $\hat\sigma(v)=0$ for all $v$ that are easy to recolor.
\begin{description}
\item[WH1.] Initially, let $\hat\sigma(v)=\sigma(v)$ for all $v\in V$.
\item[WH2.] While there exist a vertex $v\in V$ with $\hat\sigma(v)\neq0$ that is not stable under $\hat\sigma$, set $\hat\sigma(v)=0$.
\end{description}
The process {\bf WH1--WH2} is similar to processes studied in~\cite{ParisiWhitening,LenkaThesis} in the context of random graph coloring,
and in~\cite{AchlioptasRicci} in the context of random $k$-SAT.
(The term ``whitening process'' stems from~\cite{ParisiWhitening}.)
Clearly, the final outcome $\hat\sigma$ of the whitening process is independent of the order in which {\bf WH2} proceeds.

The intuition behind the whitening process is that if we attempt to recolor some stable vertex $v$
with another color $j\in\brk k\setminus\cbc{\hat\sigma(v)}$, then we will have
to recolor {\em two} additional stable vertices $u_1,u_2$ as well.
Hence, any attempt to recolor a stable vertex is liable to trigger an avalanche of further recolorings (unless the graph $G$
has an abundance of short cycles, which is well-known not to be the case in the random graph $\gnm$ \whp).

The following definition is going to lead to a neat description of the outcome of the whitening process.

\begin{definition}
A {\bf \em$k$-cover} in $G$ is a map $\zeta:V\ra\cbc{0,1,\ldots,k}$ with the following properties.
\begin{description}
\item[CV1.] There is no edge $e=\cbc{u,v}$ such that $\zeta(u)=\zeta(v)\neq0$.
\item[CV2.] If $\zeta(v)\neq0$, then $v$ is stable under $\zeta$.
\item[CV3.] If $\zeta(v)=0$, then there are $i,j\in\brk k$, $i\neq j$, such that
			$v$ does not have a neighbor $u$ with $\zeta(u)=i$ and $v$ has at most
			one neighbor $w$ with $\zeta(w)=j$.
\end{description}
\end{definition}
The concept of covers is very closely related and, in fact, inspired by the properties of certain
fixed points of the Survey Propagation message passing procedure~\cite{MM}.
(To my knowledge, the term ``cover'' has not been used previously in the context of $k$-colorability, although it appears
to be in common use in the context of satisfiability~\cite{Maneva}.)

Now, the outcome of $\hat\sigma$ is the cover characterized by the following two properties.
\begin{enumerate}
\item[i.] For all vertices $v$ such that $\hat\sigma(v)\neq0$ we have $\hat\sigma(v)=\sigma(v)$.
\item[ii.] Subject to i., $|\hat\sigma^{-1}(0)|$ is minimum.
\end{enumerate}
Of course, in general the graph $G$ may have many $k$-covers that cannot be obtained
from a $k$-coloring via the whitening process.
This motivates

\begin{definition}
A $k$-cover $\zeta$ of $G$ is {\bf\em valid} if $G$ has a $k$-coloring $\sigma$ such that $\zeta=\hat\sigma$.
\end{definition}

To prove \Thm~\ref{Thm_main}, we perform a first moment argument for the number of valid $k$-covers.
The main task is to show that the all-$0$ cover (i.e., $\zeta(v)=0$ for all vertices $v$) is not a valid $k$-cover in $\gnm$ \whp\
To this end, we need to establish a few basic properties that all $k$-colorings of $\gnm$ have \whp\
More precisely, in \Sec~\ref{Sec_Prop_firstMoment} we are going to prove the following via a ``standard'' first moment argument
over $k$-colorings.

\begin{proposition}\label{Prop_firstMoment}
Assume that $k\geq k_0$ for a sufficiently large constant $k_0$.
Moreover, assume that $d=2k\ln k-\ln k-c$, with $0\leq c\leq 4$.
\begin{enumerate}
\item Let $Z$ be the number of $k$-colorings of $\gnm$. Then
		$\frac1n\ln\Erw\brk Z=\frac{c+o_k(1)}{2k}.$
\item \Whp\ all $k$-colorings of $\gnm$ satisfy $|\sigma^{-1}(i)|=(1+o_k(1))\frac nk$ for all $i\in\brk k$.
\item In fact,
	\whp\ $\gnm$ does not have a $k$-coloring $\sigma$ such that $|\sigma^{-1}(i)-n/k|>n/(k\ln^4k)$ for more than $\ln^8k$ colors $i\in\brk k$.
\end{enumerate}
\end{proposition}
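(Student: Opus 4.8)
The plan is to run a plain first moment over $k$-colorings in the independent-edge model $G'(n,m)$ and transfer the upper-bound statements (parts~2 and~3) back to $\gnm$ via \Lem~\ref{Lemma_independentEdges}; for the exact estimate in part~1 I would instead work directly in $\gnm$, where for a coloring $\sigma$ with color-density profile $\rho=\bc{|\sigma^{-1}(1)|/n,\ldots,|\sigma^{-1}(k)|/n}$ one has $\pr\brk{\sigma\text{ proper}}=\binom{f(\rho)}m/\binom{\binom n2}m$ with $f(\rho)=\binom n2-\sum_i\binom{n\rho_i}2$. In $G'(n,m)$ the events ``$\vec e_i$ is bichromatic under $\sigma$'' are independent, each of probability $1-\norm\rho_2^2$, so a fixed $\sigma$ of profile $\rho$ is proper with probability exactly $(1-\norm\rho_2^2)^m$; since there are $\binom{n}{n\rho_1,\ldots,n\rho_k}=\exp(nH(\rho)+O(\log n))$ colorings with a prescribed profile ($H$ the natural-logarithm entropy) and at most $(n+1)^k$ profiles in all, everything reduces to understanding
\[
\phi(\rho)=H(\rho)+\tfrac d2\ln\bc{1-\norm\rho_2^2}.
\]

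The heart of the argument is a single quantitative estimate on $\phi$ in terms of the distance from the uniform profile. Writing $\rho_i=\tfrac1k(1+x_i)$ with $\sum_ix_i=0$ and putting $S=\sum_ix_i^2$, one checks the exact identities
\[
H(\rho)=\ln k-\tfrac1k\sum_i\varphi(x_i)\qquad\text{and}\qquad\norm\rho_2^2=\tfrac1k\bc{1+\tfrac Sk}=\bc{1-\tfrac1k}\bc{1-\tfrac S{k(k-1)}},
\]
where $\varphi(x)=(1+x)\ln(1+x)-x$ is precisely the function from \Lem~\ref{Lemma_Chernoff} — the constraint $\sum_ix_i=0$ kills the linear part of $(1+x_i)\ln(1+x_i)$. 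Substituting and using $\ln(1-y)\leq-y$ gives, for every profile with $\norm\rho_2^2<1$,
\[
\phi(\rho)\leq\psi_k-\tfrac1k\sum_i\varphi(x_i)-\tfrac d{2k(k-1)}S\leq\psi_k-\tfrac{\ln k}{2k}S,\qquad\psi_k:=\ln k+\tfrac d2\ln\bc{1-\tfrac1k},
\]
the last step using $\varphi\geq0$ and $\tfrac d{2(k-1)}\geq\tfrac12\ln k$ for $k\geq k_0$. Finally a Taylor expansion of $\ln(1-1/k)$, keeping the $\tfrac1{2k^2}$-order term and plugging in $d=2k\ln k-\ln k-c$, collapses to $\psi_k=\tfrac{c+o_k(1)}{2k}$, so in particular $\psi_k\leq3/k$ for $k\geq k_0$. (Profiles with $\norm\rho_2^2=1$ are point masses, contribute $0$ to $\Erw\brk Z$, and can be discarded.)

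From here the three parts drop out. For part~1: by the displayed bound $\phi(\rho)\leq\psi_k$ for all $\rho$, while $\phi$ equals $\psi_k$ at the uniform profile, so $\max_\rho\phi(\rho)=\psi_k$ (attained up to $O(1/n)$ by the near-uniform profile), which yields the two-sided bound $\tfrac1n\ln\Erw\brk Z\to\psi_k=\tfrac{c+o_k(1)}{2k}$ — the $\gnm$-versus-$G'(n,m)$ discrepancy only perturbs sub-exponential factors. For part~2, if $\sigma$ has a color class with $\big||\sigma^{-1}(i)|-n/k\big|>(\ln k)^{-1/4}\,n/k$ then $S\geq x_i^2>(\ln k)^{-1/2}$, hence $\phi(\rho)\leq\psi_k-\tfrac{(\ln k)^{1/2}}{2k}\leq-\tfrac1k$ for $k\geq k_0$; summing $\exp(n\phi(\rho)+o(n))$ over the polynomially many such profiles gives $\exp(-\Omega(n/k))\to0$, so by Markov and \Lem~\ref{Lemma_independentEdges} no such coloring exists in $\gnm$ \whp, and $(\ln k)^{-1/4}=o_k(1)$. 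Part~3 is the same with a cruder input: if more than $\ln^8k$ colors deviate by more than $n/(k\ln^4k)$ then $S>\ln^8k\cdot(\ln^{-4}k)^2=1$, so $\phi(\rho)\leq\psi_k-\tfrac{\ln k}{2k}<-\tfrac1k$ and the first moment again tends to $0$.

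I do not expect a real obstacle: the proposition is a careful but routine first-moment calculation, and the whole analytic content is the one estimate on $\phi$ above. The two points that need genuine attention are (a) checking that the per-profile error terms are $o(n)$ uniformly in $\rho$ — Stirling for the multinomial, the rounding $m=\lceil dn/2\rceil$, and the comparison of $\binom{f(\rho)}m/\binom{\binom n2}m$ with $(1-\norm\rho_2^2)^m$ in $\gnm$, which costs only a factor $\exp(O(d))=\exp(O_k(k\ln k))$, hence $o(n)$ after dividing by $n$ — and (b) the expansion of $\psi_k$, where the two $\tfrac{\ln k}{2k}$-sized contributions coming from the first two terms of $\ln(1-1/k)$ must be balanced against the $\tfrac{\ln k}{2k}$-sized correction hidden in $d$ in order to land exactly on $\tfrac{c+o_k(1)}{2k}$.
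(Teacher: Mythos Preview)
Your proposal is correct and covers all three parts. The overall architecture---first moment over profiles, reduction to the function $\phi(\rho)=H(\rho)+\tfrac d2\ln(1-\norm\rho_2^2)$, identification of the maximum $\psi_k=\ln k+\tfrac d2\ln(1-1/k)=\tfrac{c+o_k(1)}{2k}$, and a quadratic penalty away from the uniform profile---matches the paper exactly. (One typo: your second identity should read $1-\norm\rho_2^2=(1-\tfrac1k)(1-\tfrac{S}{k(k-1)})$; you use it in this form afterwards.)

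The point of genuine difference is how the quadratic penalty is obtained. The paper eliminates $\alpha_k$, computes the Hessian $D^2\phi$ explicitly, argues it is globally negative-definite with all eigenvalues below $-d$, and then invokes Taylor's theorem to get $\phi(\alpha)\leq\psi_k-\tfrac d2\norm{\alpha-k^{-1}\vecone}_2^2$. Your route is more elementary: the substitution $\rho_i=(1+x_i)/k$ with $\sum_i x_i=0$ turns the entropy \emph{exactly} into $\ln k-\tfrac1k\sum_i\varphi(x_i)$ and factors $1-\norm\rho_2^2$ \emph{exactly} as $(1-\tfrac1k)(1-\tfrac{S}{k(k-1)})$, after which a single application of $\ln(1-y)\leq-y$ and $\varphi\geq0$ gives the same penalty $-\tfrac{d}{2k(k-1)}S$ (equivalently $-\tfrac d2\norm{\alpha-k^{-1}\vecone}_2^2$) without any calculus. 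This buys you a shorter, coordinate-free argument and sidesteps the somewhat fiddly verification that the Hessian stays negative-definite on the whole simplex; the paper's approach, on the other hand, is the more reusable template when such clean algebraic factorizations are not available.
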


\noindent
Building upon \Prop~\ref{Prop_firstMoment}, we will establish the following properties of valid $k$-covers  in \Sec~\ref{Sec_Prop_core}.

\begin{proposition}\label{Prop_core}
There is a number $k_0$ such that for 
$k\geq k_0$ and $2k\ln k-\ln k-4\leq d\leq2k\ln k$ any valid $k$-cover $\zeta$ of $\gnm$ has the following properties \whp
\begin{enumerate}
\item We have $|\zeta^{-1}(0)|\leq nk^{-2/3}$.
\item For all $i\in\brk k$ we have $|\zeta^{-1}(i)|=(1+o_k(1))n/k$.
\item In fact, there are no more than $\ln^9k$ indices $i\in\brk k$ such that $|\zeta^{-1}(i)-n/k|>n/(k\ln^3k)$.
\end{enumerate}
\end{proposition}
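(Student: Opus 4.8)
The plan is to leverage Proposition~\ref{Prop_firstMoment} together with a first-moment bound on the number of valid $k$-covers with a \emph{large} set of $0$-vertices. The starting observation is that if $\zeta$ is a valid $k$-cover, then $\zeta=\hat\sigma$ for some $k$-coloring $\sigma$, and by property~i of the whitening outcome the non-zero vertices of $\zeta$ inherit their colors from $\sigma$. In particular, the restriction of $\zeta$ to $V\setminus\zeta^{-1}(0)$ is a proper partial $k$-coloring (by \textbf{CV1}), and $\sigma$ extends it to a proper $k$-coloring of the whole graph. Thus for each color class size statement it suffices to control $k$-colorings $\sigma$ with the relevant imbalance and then observe that whitening only moves vertices into class $0$, which can only \emph{shrink} each class $\zeta^{-1}(i)$ relative to $\sigma^{-1}(i)$.

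The first and main step is item~1: showing $|\zeta^{-1}(0)|\le nk^{-2/3}$ \whp. I would fix a target set $S\subseteq V$ with $|S|=s$ and count, via first moments over $G'(n,m)$ (using \Lem~\ref{Lemma_independentEdges}), the expected number of pairs $(\sigma,\zeta)$ where $\sigma$ is a proper $k$-coloring, $\zeta=\hat\sigma$, and $\zeta^{-1}(0)=S$. The key combinatorial gain is condition \textbf{CV3}: every vertex $v\in S$ must, with respect to $\zeta$, \emph{miss} some color $i$ entirely among its neighbors and see another color $j$ at most once. Since typical degrees are $d=\Theta(k\ln k)$ and the $k-1$ available colors each appear with density $\approx 1/k$ among neighbors, the probability that a \emph{fixed} vertex satisfies \textbf{CV3} is roughly $k\cdot(1-1/k)^{d}\cdot(\text{poly})\le k^{-1+o_k(1)}$, because $(1-1/k)^{d}\approx \exp(-d/k)=\exp(-2\ln k+o_k(1))=k^{-2+o_k(1)}$. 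Multiplying this ``rarity'' over the $s$ vertices of $S$ against the $\binom ns\le (\eul n/s)^s$ choices of $S$ and the $\Erw[Z]=\exp((c+o_k(1))n/(2k))$ factor from \Prop~\ref{Prop_firstMoment}(1), one finds the expected count is at most $\exp(s\ln(\eul n/s)-s\cdot(1-o_k(1))\ln k+(c+o_k(1))n/(2k))$, which is $o(1)$ once $s\ge nk^{-2/3}$, since then $\ln(\eul n/s)=\tfrac23\ln k+O(1)\ll(1-o_k(1))\ln k$ dominates and the $n/(2k)$ term is negligible compared to $s\ln k=nk^{-2/3}\ln k$. A union bound over all $s$ in the range $[nk^{-2/3},n]$ finishes item~1; one must be slightly careful at the extreme end $s=\Theta(n)$, where the $\binom ns$ term is largest, but there the per-vertex factor $k^{-1+o_k(1)}$ still wins because $\ln(\eul n/s)=O(1)\ll\ln k$.

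For items~2 and~3, I would argue as follows. By item~1, $|\zeta^{-1}(0)|\le nk^{-2/3}=o(n/k)\cdot k^{1/3}$; more to the point, it is $o(n/(k\ln^3 k))$ times a bounded factor only if... actually it is \emph{not} that small, so I cannot simply transfer the coloring bounds verbatim. Instead: since $\zeta=\hat\sigma$, we have $\zeta^{-1}(i)\subseteq\sigma^{-1}(i)$ for $i\neq 0$, hence $|\zeta^{-1}(i)|\le|\sigma^{-1}(i)|$, and also $\sum_{i\neq0}|\zeta^{-1}(i)|=n-|\zeta^{-1}(0)|\ge n(1-k^{-2/3})$. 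Combining the upper bounds $|\zeta^{-1}(i)|\le|\sigma^{-1}(i)|=(1+o_k(1))n/k$ (from \Prop~\ref{Prop_firstMoment}(2), valid \whp\ for the coloring $\sigma$ witnessing validity) with the total-mass lower bound forces $|\zeta^{-1}(i)|\ge n/k-|\zeta^{-1}(0)|-\sum_{j\neq i}(|\sigma^{-1}(j)|-n/k)$, and \Prop~\ref{Prop_firstMoment}(3) bounds the last sum: at most $\ln^8 k$ colors deviate by more than $n/(k\ln^4 k)$, so the total deviation is $O(\ln^8 k\cdot n/k+k\cdot n/(k\ln^4 k))=o(n/(k\ln^3 k))$ — wait, $\ln^8k\cdot n/k$ is too big, so I instead feed \Prop~\ref{Prop_firstMoment}(3) into a direct first-moment refinement rather than summing crudely. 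The clean route is: \Prop~\ref{Prop_firstMoment}(2) gives item~2 for $\zeta$ directly (upper bound from $\sigma$, lower bound from item~1 plus the other classes' upper bounds), and for item~3 one re-runs the first-moment computation of \Prop~\ref{Prop_firstMoment}(3) for colorings $\sigma$ having more than $\ln^9 k$ substantially-imbalanced classes, noting that $\zeta^{-1}(i)$ can differ from $\sigma^{-1}(i)$ by at most $|\zeta^{-1}(0)|\le nk^{-2/3}\ll n/(k\ln^3 k)$... no: $nk^{-2/3}$ versus $n/(k\ln^3k)=n k^{-1}\ln^{-3}k$, and $k^{-2/3}\gg k^{-1}\ln^{-3}k$, so that gap is \emph{not} absorbed either. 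Hence item~3 genuinely requires its own first-moment argument over valid covers with $>\ln^9k$ imbalanced classes, analogous to \Prop~\ref{Prop_firstMoment}(3) but with the extra \textbf{CV3} savings on whatever $0$-set appears; the entropy cost of $\ln^9 k$ imbalanced classes beats the $\Erw[Z]$-type gain, and I would organize it exactly as in \Sec~\ref{Sec_Prop_firstMoment}.

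The step I expect to be the real obstacle is item~1 at the boundary regime $s=\Theta(n)$, i.e.\ ruling out covers with a constant fraction of $0$-vertices: there the crude bound $\binom ns\le(\eul n/s)^s$ is weakest and one must extract the full $k^{-1+o_k(1)}$ per-vertex saving from \textbf{CV3} \emph{uniformly}, including handling the conditioning introduced by demanding simultaneously that $\zeta=\hat\sigma$ (not merely that $\zeta$ is \emph{some} cover) — this is where the minimality property~ii of the whitening outcome and a careful setup of the balls-and-bins / Poissonization estimates (\Lem~\ref{Lemma_BallsAndBins}, \Cor~\ref{Cor_BallsAndBins}, \Lem~\ref{Lemma_Chernoff}) for the neighbor-color counts will be needed. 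Once that uniform estimate is in hand, the rest is bookkeeping.
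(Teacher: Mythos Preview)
Your approach diverges substantially from the paper's, and as written it has a real gap at exactly the point you flag as the obstacle.

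The core issue is this: the per-vertex saving of $k^{-1+o_k(1)}$ you extract from \textbf{CV3} is computed as if each $v\in S$ must miss a color among its neighbors \emph{with respect to $\sigma$}. But \textbf{CV3} is a condition on $\zeta$, not on $\sigma$: it asks that $v$ have no neighbor $u$ with $\zeta(u)=i$, i.e.\ no neighbor in $\sigma^{-1}(i)\setminus S$. When $|S|=\alpha n$ with $\alpha$ bounded away from $0$, the expected number of such neighbors is only about $(1-\alpha)d/k\approx 2(1-\alpha)\ln k$, so the miss probability is $k^{-2(1-\alpha)+o_k(1)}$ and the per-vertex saving degrades to $k^{-(1-2\alpha)+o_k(1)}$, vanishing at $\alpha=1/2$ and becoming a loss beyond. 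At the extreme $S=V$, \textbf{CV3} is vacuous. So your union bound over $S$ with $|S|\geq n/2$ does not close, and invoking the minimality of $\hat\sigma$ does not obviously rescue it, since minimality is a global constraint that does not factor into per-vertex events.

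If instead you read the per-vertex event as ``$v$ is initially unstable with respect to $\sigma$'' (which is what your density calculation actually matches), you lose the implication: not every $v\in\hat\sigma^{-1}(0)$ is initially unstable --- many are whitened only through the cascade. Controlling that cascade is exactly what needs a new idea.

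The paper proceeds quite differently and never uses \textbf{CV3} here. For each $k$-coloring $\sigma$ it runs an explicit three-step \emph{core} construction ({\bf CR1--CR3}): remove vertices with fewer than $3\ell$ neighbors in some color class (the set $W$); then remove vertices with too many neighbors in $W$ (the set $U$); then iteratively peel vertices with $\geq\ell$ neighbors already removed (the set $Y$). By design every surviving vertex remains stable throughout whitening, so $\hat\sigma^{-1}(0)\subset W\cup Y$. Working in the planted model $G'(\sigma)$, the paper proves large-deviation bounds $\pr[|W|>nk^{-0.7}]\leq\exp(-16n/k)$ and $\pr[|U|>n\ln\ln k/(k\ln k)]\leq\exp(-10n/k)$ (\Lem s~\ref{Lem_size_of_W}, \ref{Lem_size_of_U}), and controls $|Y|$ via a deterministic expansion property of $\Gnm$ (\Lem~\ref{Lemma_expansion}). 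Since these failure probabilities beat the $\exp((c+o_k(1))n/(2k))$ from \Prop~\ref{Prop_firstMoment}(1), a first moment over \emph{colorings} --- with no union bound over $S$ at all --- shows that \whp\ every $\sigma$ has a large core. Items~2 and~3 come for free from the same lemma, which bounds each $|W_i|$ individually; this is how the paper avoids the mismatch $nk^{-2/3}\gg n/(k\ln^3k)$ that tripped you up.

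In short, the missing idea is to argue in the planted model about an explicit structural witness (the core) whose failure has probability $\exp(-\Omega(n/k))$, rather than to union-bound over putative zero-sets $S$ using cover axioms that degenerate when $S$ is large.
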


\noindent
Finally, in \Sec~\ref{Sec_Prop_noCover} we perform the first moment argument over $k$-covers.

\begin{proposition}\label{Prop_noCover}
There is  $\eps_k=o_k(1)$ such that
for $d\geq2k\ln k-\ln k-1+\eps_k$ \whp\ the random graph $\gnm$
does not have a $k$-cover with properties 1.--3.\ from \Prop~\ref{Prop_core}.
\end{proposition}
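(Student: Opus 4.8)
The plan is to run a first-moment computation over maps $\zeta:V\to\cbc{0,1,\ldots,k}$ that satisfy \textbf{CV1}--\textbf{CV3} together with the size constraints~1.--3.\ from \Prop~\ref{Prop_core}, and show that the expected number of such maps in $\Gnm$ tends to $0$ once $d\geq2k\ln k-\ln k-1+\eps_k$. By \Lem~\ref{Lemma_independentEdges} it suffices to work in the independent-edge model $\Gnm$, which makes the edges genuinely i.i.d.\ and lets me treat the $m$ edge-slots as balls thrown into bins (pairs of vertices), so that \Cor~\ref{Cor_BallsAndBins} applies to convert exact occupancy probabilities into Poissonised ones at the cost of a $\mathrm{poly}(n)$ factor that is swamped by the exponential gain.

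First I would fix a ``color profile'' $(n_0,n_1,\ldots,n_k)$ with $n_i=|\zeta^{-1}(i)|$ constrained by properties~1.--3., and count the number of maps with this profile by a multinomial coefficient, $\binom{n}{n_0,n_1,\ldots,n_k}$. Conditioning on $\zeta$, the edges of $\Gnm$ are i.i.d., so the probability that a given edge is ``legal'' for $\zeta$ has two contributions: it must not be monochromatic on a nonzero color (this is \textbf{CV1}), which happens with probability roughly $1-\sum_{i\geq1}(n_i/n)^2\approx1-\tfrac1k$ when the profile is near-balanced; and, crucially, the stability requirement \textbf{CV2} is itself a \emph{lower-bound} event on the edge pattern around each nonzero vertex rather than a per-edge event. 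So the event I actually need to bound is: CV1 holds for all $m$ edges, \emph{and} for every $v$ with $\zeta(v)\neq0$ and every color $j\neq\zeta(v)$, at least two edges land between $v$ and $\zeta^{-1}(j)$. The second requirement \emph{costs} probability, and the whole point of the cover trick is that this cost beats the entropy $\tfrac{c+o_k(1)}{2k}n$ coming from the multinomial coefficient. I would compute $\Erw[Z_{\mathrm{cov}}]$ as $\sum_{\text{profiles}}\binom{n}{n_0,\ldots,n_k}\cdot\pr[\text{all }m\text{ edges CV1-legal}]\cdot\pr[\text{CV2 stability}\mid\text{CV1}]$, expand the logarithm in $n$, and show the exponential rate is strictly negative. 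Using property~1., $|\zeta^{-1}(0)|\leq nk^{-2/3}=o(n)$, so the ``$0$-vertices'' contribute a negligible entropy term $O(k^{-2/3}\ln k)\cdot n=o_k(1)n/(2k)$ — essentially the cover looks like a genuine $k$-coloring on a $(1-o(1))$-fraction of vertices, and the CV1 part of the calculation reproduces the classical first-moment rate $2k\ln k-\ln k$ up to $o_k(1)$. The extra gain of $+1$ comes entirely from the CV2 stability factor.

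The key estimate, then, is the CV2 factor. For a fixed near-balanced profile, condition on the (Poissonised) degree structure: each nonzero vertex $v$ sees, toward color class $j$, a Poisson-like number of incident edges with mean $\approx d/k=(2\ln k)(1+o_k(1))$. The probability that this count is $\geq2$ is $1-(1+\mu)e^{-\mu}$ with $\mu\approx2\ln k$; equivalently the ``bad'' event (zero or one such edge) has probability $(1+o(1))\mu e^{-\mu}\approx(2\ln k)k^{-2}$. Over the $k-1$ colors $j$ and the $\approx n$ nonzero vertices these are \emph{positively} correlated in a benign way (sharing few edges), but I should handle the dependence carefully: I would either (i) use the exact balls-in-bins joint distribution of \Lem~\ref{Lemma_BallsAndBins} and a union/second-moment-free argument on the ``number of violations'', or (ii) more robustly, directly bound $\pr[\text{CV2}]$ by first exposing the edge endpoints in the classes $\zeta^{-1}(j)$ one class at a time. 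The upshot I expect is that $\ln\pr[\text{CV2}\mid\text{CV1}]\leq -(1-o_k(1))\cdot\tfrac{n}{k}\cdot\ln\!\big(1/(1-(1+\mu)e^{-\mu})\big)^{-1}$... more precisely $\pr[\text{CV2}]\leq\exp\!\big(-(1+o_k(1))\,n\,e^{-d/k}\big)$ roughly, and since $e^{-d/k}=e^{-(2\ln k-\ln(k)/k-\cdots)}\approx k^{-2}$, this is $\exp(-(1+o_k(1))n/k^2)$, which is \emph{not} by itself enough — I need it to beat $\exp(\tfrac{c}{2k}n)$, so the relevant comparison is at the scale $n/(2k)$, and a more careful accounting of the per-vertex log-probability (there are $\approx n$ vertices each contributing $\approx(k-1)\mu e^{-\mu}\approx 2k^{-1}\ln k$ to the negative exponent, giving total rate $\approx -(2\ln k)/1$... ) is what produces the clean threshold. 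This arithmetic — getting the constant in front of the CV2 exponent exactly right so that the net rate is $\tfrac{1}{2k}\big(c-1+o_k(1)\big)n$, hence negative precisely for $c<1-\eps_k$, i.e. $d>2k\ln k-\ln k-1+\eps_k$ — is the heart of the proof and the step I expect to be most delicate.

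The main obstacle, I anticipate, is controlling the dependence among the stability events in the CV2 factor and simultaneously keeping error terms uniformly $o_k(1)$ across the whole (exponentially large) family of admissible profiles. Property~3.\ of \Prop~\ref{Prop_core} is exactly the tool for the latter: it confines all but $\ln^9k$ color classes to within $n/(k\ln^3k)$ of $n/k$, so the profile-dependence of both the CV1 probability and the $\mu_j$'s is tightly pinned, the number of relevant profiles is only $k^{O(\ln^9 k)}=\exp(o(n))$ (indeed $\exp(O_k(1)\ln n)$-ish — negligible), and the $o_k(1)$ slack in the exponent is genuinely uniform. For the dependence issue, the cleanest route is probably to expose edges in two phases — first all edge-\emph{locations} to get the degree sequence via \Lem~\ref{Lemma_BallsAndBins}, then note that conditional on the vertex-to-class incidence counts the CV1 and CV2 events factorise over vertices — and then invoke \Cor~\ref{Cor_BallsAndBins} to pass from the conditioned multinomial back to independent Poissons, paying only $O(\sqrt m)$. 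Once that is set up, the remaining work is the (still nontrivial but mechanical) Laplace-type expansion of $\tfrac1n\ln\Erw[Z_{\mathrm{cov}}]$ and checking it is $\leq\tfrac1{2k}(c-1+o_k(1))<0$.
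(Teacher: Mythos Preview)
Your overall architecture --- first moment over covers in $\Gnm$, multinomial entropy for the profile, \textbf{CV1} as an independent-set constraint, Poissonisation via \Cor~\ref{Cor_BallsAndBins} to factorise the stability event over vertices --- is exactly the route the paper takes. But there is one genuine gap that breaks the argument as written.

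You claim that because $\alpha_0=|\zeta^{-1}(0)|/n\leq k^{-2/3}$, the $0$-class ``contributes a negligible entropy term $O(k^{-2/3}\ln k)\cdot n=o_k(1)n/(2k)$''. This is false arithmetic: $k^{-2/3}\ln k$ is \emph{much larger} than $1/k$, not $o_k(1/k)$. Concretely, the excess entropy from the $0$-class is $-\alpha_0\ln\alpha_0+\alpha_0\ln k\sim\tfrac{5}{3}k^{-2/3}\ln k$ at the boundary $\alpha_0=k^{-2/3}$, which swamps the $O(1/k)$ scale at which the whole argument lives. If you use only \textbf{CV1} and \textbf{CV2} as you propose, the resulting rate is approximately
\[
-\alpha_0\ln\alpha_0+\alpha_0\ln k+\frac{c-2-4\ln k}{2k}+o_k(1/k),
\]
which is \emph{positive} for $\alpha_0$ near $k^{-2/3}$, and the first moment does not close.

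The missing ingredient is \textbf{CV3}. Each vertex $v$ with $\zeta(v)=0$ must have two colors $i\neq j$ with no neighbour in $\zeta^{-1}(i)$ and at most one in $\zeta^{-1}(j)$; under the Poissonisation this occurs with probability $p_0\leq(1+o_k(1))\tfrac{1+4\ln k}{2k^2}$ per $0$-vertex. Thus \textbf{CV3} contributes $\alpha_0\ln p_0\approx-2\alpha_0\ln k$ to the rate, which precisely cancels the runaway $+\alpha_0\ln k$ term above. After this cancellation the $\alpha_0$-dependent part becomes $\alpha_0\brk{1-\ln\frac{2k\alpha_0}{1+4\ln k}}+o_k(1/k)$, whose maximum over $\alpha_0$ is $(1+o_k(1))\tfrac{1+4\ln k}{2k}$, and combining with the constant piece $\tfrac{c-2-4\ln k}{2k}$ yields the target rate $\tfrac{c-1+o_k(1)}{2k}$. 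So the ``$-1$'' you are after comes not from \textbf{CV2} alone but from the interplay between the $0$-class entropy and the \textbf{CV3} penalty; without \textbf{CV3} the bound is vacuous.
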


\noindent
\Thm~\ref{Thm_main} is immediate from \Prop s~\ref{Prop_core} and~\ref{Prop_noCover}.
Furthermore, we will prove \Cor~\ref{Cor_allFrozen} in \Sec~\ref{Sec_Prop_core}.

\section{Proof of \Prop~\ref{Prop_firstMoment}}\label{Sec_Prop_firstMoment}

The proof of \Prop~\ref{Prop_firstMoment} is very much  based on standard arguments, reminiscent but unfortunately not (quite) identical to
estimates from, e.g., \cite{AchlioptasMolloy}.
Suppose $d=2k\ln k-\ln k-c$ with $0\leq c\leq 4$.
Throughout this section we work with the random graph $\Gnm$ with $m$ independent edges.

\begin{lemma}\label{Lemma_firstM}
Let $\nu=(\nu_1,\ldots,\nu_k)$ be a $k$-tuple of non-negative integers such that $\sum_{i=1}^k \nu_i=n$.
Let $Z_\nu$ be the number of $k$-colorings $\sigma$ of $\Gnm$ such that
$\abs{\sigma^{-1}(i)}=\nu_i$ for all $i\in\brk k$.
Then
	\begin{equation}\label{eqLemma_firstM}
	\ln\Erw\brk{Z_\nu}=o(n)+\sum_{i=1}^k\nu_i\ln(n/\nu_i)+\frac d2\ln\brk{1-\sum_{i=1}^k\bcfr{\nu_i}{n}^2}.
	\end{equation}
\end{lemma}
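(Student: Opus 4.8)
The plan is to compute $\Erw[Z_\nu]$ by linearity of expectation, summing over all candidate colorings $\sigma$ with the prescribed color-class sizes $\nu_1,\dots,\nu_k$. The number of such $\sigma$ is the multinomial coefficient $\binom{n}{\nu_1,\dots,\nu_k}$. For a fixed such $\sigma$, I need the probability that $\sigma$ is a proper coloring of $\Gnm$, i.e.\ that none of the $m$ independent edges is monochromatic under $\sigma$. Since in $\Gnm$ each edge $\vec e_i$ is a uniformly random ordered pair from $V\times V$, the probability that a single edge is monochromatic is exactly $\sum_{i=1}^k (\nu_i/n)^2$ (the chance both endpoints land in the same color class). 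By independence of the $m$ edges, the probability that $\sigma$ is proper is $\bigl(1-\sum_{i=1}^k(\nu_i/n)^2\bigr)^m$. Hence
\begin{equation*}
\Erw[Z_\nu]=\binom{n}{\nu_1,\dots,\nu_k}\bc{1-\sum_{i=1}^k\bcfr{\nu_i}{n}^2}^{m}.
\end{equation*}
Working with $\Gnm$ rather than $\gnm$ is precisely what makes the edges independent here; this is why the section announces it works in $\Gnm$, and Lemma~\ref{Lemma_independentEdges} lets us transfer the eventual conclusions back to $\gnm$.

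Next I would take logarithms and apply Stirling's formula to the multinomial coefficient. Writing $\ln\binom{n}{\nu_1,\dots,\nu_k}=\ln n!-\sum_{i=1}^k\ln\nu_i!$ and using $\ln N!=N\ln N-N+O(\ln N)$, the linear terms $-n$ and $+\sum_i\nu_i$ cancel because $\sum_i\nu_i=n$, leaving
\begin{equation*}
\ln\binom{n}{\nu_1,\dots,\nu_k}=n\ln n-\sum_{i=1}^k\nu_i\ln\nu_i+O(k\ln n)=\sum_{i=1}^k\nu_i\ln(n/\nu_i)+o(n),
\end{equation*}
where the error is $o(n)$ since $k$ is fixed (or at worst grows slowly). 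One subtlety: if some $\nu_i=0$ the term $\nu_i\ln(n/\nu_i)$ is interpreted as $0$ and Stirling still applies trivially, so this causes no trouble. For the edge factor, $m=\lceil dn/2\rceil$ gives $m\ln\bigl(1-\sum_i(\nu_i/n)^2\bigr)=\frac d2 n\ln\bigl(1-\sum_i(\nu_i/n)^2\bigr)+O(1)$; note $\sum_i(\nu_i/n)^2\le 1$, and in fact $\ge 1/k$, so the logarithm is bounded and the rounding of $m$ contributes only $O(1)$. Combining the two pieces yields exactly \eqref{eqLemma_firstM}.

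There is no real obstacle here — the statement is a direct first-moment computation — but the one point that warrants care is the bookkeeping of the Stirling error terms and the degenerate case where some color classes are empty or tiny, so that $\ln(n/\nu_i)$ blows up; one checks that $\nu_i\ln(n/\nu_i)$ is still $O(n\ln n/n)=o(n)$ uniformly and that such terms are harmless. The only genuinely model-dependent input is the clean formula $\sum_i(\nu_i/n)^2$ for the monochromatic-edge probability, which relies on the independent-edge model $\Gnm$; in $\gnm$ itself one would get a slightly messier expression, which is exactly the reason the paper passes to $\Gnm$ and invokes Lemma~\ref{Lemma_independentEdges}.
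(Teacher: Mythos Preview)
Your proposal is correct and follows essentially the same approach as the paper: count the maps $\sigma$ with the given class sizes by the multinomial coefficient, use independence of the edges in $\Gnm$ to get the probability $(1-\sum_i(\nu_i/n)^2)^m$ that $\sigma$ is proper, and apply Stirling's formula. The paper's proof is terser and does not spell out the handling of $m=\lceil dn/2\rceil$ or of tiny color classes, but the argument is identical.
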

\begin{proof}
Let $\Sigma_\nu$ be the set of all $\sigma:V\ra\brk k$ such that $\abs{\sigma^{-1}(i)}=\nu_i$ for all $i\in\brk k$.
By Stirling's formula,
	\begin{eqnarray}\label{eqFirstM0}
	\ln\abs{\Sigma_\nu}&=&o(n)+\sum_{i=1}^k\nu_i\ln(n/\nu_i).
	\end{eqnarray}
Furthermore, the probability of being a $k$-coloring in $\Gnm$ is the same for all $\sigma\in\Sigma_\nu$.
In fact, due to the independence of the edges in $\Gnm$, this probability is
	$q=(1-\sum_{i=1}^k(\nu_i/n)^2)^m,$
because $\sigma$ is a $k$-coloring iff each of the color classes $\sigma^{-1}(i)$ is an independent set.
As $\Erw\brk{Z_\nu}=\abs{\Sigma_\nu}\cdot q$,
the assertion follows from (\ref{eqFirstM0}).
\qed\end{proof}

\begin{corollary}\label{Cor_firstM}
Let $Z$ be the total number of $k$-colorings of $\Gnm$.
We have
	$$\frac1n\ln\Erw\brk Z=\ln k+\frac d2\ln(1-1/k)+o(1)=\frac{c}{2k}+O_k(\ln k/k^2).$$
\end{corollary}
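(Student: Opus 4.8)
The plan is to specialize Lemma~\ref{Lemma_firstM} to the balanced colour distribution $\nu_i=n/k$ for all $i$ and combine with the (trivially at most polynomial) number of distinct colour-distribution vectors $\nu$. Setting each $\nu_i=n/k$ in~(\ref{eqLemma_firstM}) gives $\sum_i\nu_i\ln(n/\nu_i)=n\ln k$ and $\sum_i(\nu_i/n)^2=1/k$, so the right-hand side becomes $n\ln k+\frac{m}{1}\ln(1-1/k)+o(n)$, i.e. after dividing by $n$ and recalling $m=\lceil dn/2\rceil$ we obtain $\ln k+\frac d2\ln(1-1/k)+o(1)$ for the balanced term. Since $\EX[Z]\ge\EX[Z_{(n/k,\ldots,n/k)}]$ this is also a lower bound on $\frac1n\ln\EX[Z]$ up to $o(1)$. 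For the matching upper bound, note there are at most $(n+1)^k=\exp(o(n))$ choices of $\nu$, and for each one~(\ref{eqLemma_firstM}) shows $\frac1n\ln\EX[Z_\nu]\le\frac1n\sum_i\nu_i\ln(n/\nu_i)+\frac d2\ln(1-\sum_i(\nu_i/n)^2)+o(1)$; the first sum is the entropy $H(\nu/n)\le\ln k$ with equality exactly at the uniform distribution, and the logarithmic term is maximized (least negative) when $\sum_i(\nu_i/n)^2$ is smallest, which by convexity is again the uniform case. Hence every $\nu$ contributes at most $\ln k+\frac d2\ln(1-1/k)+o(1)$, and summing the $\exp(o(n))$ terms preserves the bound. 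This establishes $\frac1n\ln\EX[Z]=\ln k+\frac d2\ln(1-1/k)+o(1)$.

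It remains to verify the arithmetic identity $\ln k+\frac d2\ln(1-1/k)=\frac{c}{2k}+O_k(\ln k/k^2)$ when $d=2k\ln k-\ln k-c$. Here I would expand $\ln(1-1/k)=-1/k-1/(2k^2)+O(k^{-3})$ and multiply by $d/2=k\ln k-\tfrac12\ln k-\tfrac c2$. The leading product $-\frac{d}{2k}=-\ln k+\frac{\ln k}{2k}+\frac{c}{2k}$ cancels the $\ln k$ term, leaving $\frac{\ln k}{2k}+\frac{c}{2k}$ plus the contribution $-\frac{d}{4k^2}=O_k(\ln k/k^2)$ from the quadratic term of the logarithm plus higher-order corrections, all of which are $O_k(\ln k/k^2)$; in particular $\frac{\ln k}{2k}=O_k(\ln k/k^2)\cdot k$ — wait, that is not $O_k(\ln k/k^2)$, so the bookkeeping needs care. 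The honest accounting is that $\ln k+\frac d2\ln(1-1/k)=\frac{c}{2k}+\frac{\ln k}{2k}+O_k(\ln k/k^2)$; since $\frac{\ln k}{2k}$ is itself $o_k(1)$ this is consistent with part~1 of Proposition~\ref{Prop_firstMoment}, which only claims $\frac1n\ln\EX[Z]=\frac{c+o_k(1)}{2k}$, and one absorbs $\frac{\ln k}{2k}+O_k(\ln k/k^2)$ into the $o_k(1)/2k$ error. The statement of the corollary as written, with error $O_k(\ln k/k^2)$, should be read with the $\frac{\ln k}{2k}$ term understood to be included on the left via a slightly more generous expansion, or the $O_k$ term interpreted loosely; in any case the substantive content — that the normalized log-first-moment is $(c+o_k(1))/(2k)$ — follows immediately.

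The main obstacle is not any single estimate but rather the two-sided optimization over $\nu$: one must check that the entropy term $H(\nu/n)$ and the second-moment term $\ln(1-\|\nu/n\|_2^2)$ are \emph{simultaneously} maximized at the uniform point, so that no skewed distribution can do better. Both facts are elementary consequences of strict concavity of $x\mapsto x\ln x$ (equivalently of entropy) and of $x\mapsto\ln(1-x)$ composed with the convex map $\nu\mapsto\|\nu\|_2^2$, so the argument is clean; the only thing to be careful about is that we need this for the \emph{expected} count, where the product structure of $\Gnm$ makes the per-colour-class independence in Lemma~\ref{Lemma_firstM} exact and hence the bound is tight up to the $\exp(o(n))$ prefactor from the number of vectors $\nu$. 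Transferring from $\Gnm$ back to $\gnm$ costs only the $O(1)$ factor from Lemma~\ref{Lemma_independentEdges}, which is absorbed into the $o(1)$.
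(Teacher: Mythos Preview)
Your overall strategy---summing over the colour profiles $\nu$, bounding by $n^k\max_\nu\Erw[Z_\nu]$, and observing that both the entropy term and the term $\ln(1-\sum_i\alpha_i^2)$ are simultaneously maximised at the uniform point---is exactly what the paper does. So the first half of the argument is fine and matches the paper.

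The gap is in your arithmetic for the second equality, and it is a real one: you claim $-\tfrac{d}{4k^2}=O_k(\ln k/k^2)$, but since $d=2k\ln k-\ln k-c$ we actually have
\[
-\frac{d}{4k^2}=-\frac{2k\ln k-\ln k-c}{4k^2}=-\frac{\ln k}{2k}+O_k(\ln k/k^2).
\]
That leading $-\tfrac{\ln k}{2k}$ cancels precisely the $+\tfrac{\ln k}{2k}$ you correctly extracted from $-d/(2k)$. Hence
\[
\ln k+\frac{d}{2}\ln(1-1/k)=\frac{c}{2k}+O_k(\ln k/k^2)
\]
exactly as the corollary asserts; there is no leftover $\tfrac{\ln k}{2k}$ term, and no need to ``read the $O_k$ loosely'' or fall back on the weaker $o_k(1)/(2k)$ claim of Proposition~\ref{Prop_firstMoment}. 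The paper's expansion writes $\frac d2\ln(1-1/k)=-\bigl(k\ln k-\tfrac{\ln k}{2}-\tfrac c2\bigr)\bigl(\tfrac1k+\tfrac1{2k^2}+O_k(k^{-3})\bigr)$ and multiplies out, which makes this cancellation between the cross terms $k\ln k\cdot\tfrac{1}{2k^2}$ and $\tfrac{\ln k}{2}\cdot\tfrac1k$ transparent.
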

\begin{proof}
Let $\cN$ be the set of all $k$-tuples $\nu=(\nu_1,\ldots,\nu_k)$ of non-negative integers
such that $\sum_{i=1}^k\nu_i=n$.
Then
	$\Erw\brk{Z}=\sum_{\nu\in\cN}\Erw\brk{Z_\nu}\leq n^k\max_{\nu\in\cN}\Erw\brk{Z_\nu}.$
Hence, \Lem~\ref{Lemma_firstM} yields
	\begin{eqnarray}
	\frac1n\ln\Erw\brk{Z}
		&=&o(1)+\max\cbc{\sum_{i=1}^k\nu_i\ln(n/\nu_i)+\frac d2\ln\brk{1-\sum_{i=1}^k\bcfr{\nu_i}{n}^2}:\nu\in\cN}.
			\label{eqCorFirstM}
	\end{eqnarray}
Letting $\cA$ be the set of all $k$-tuples $\alpha=(\alpha_1,\ldots,\alpha_k)\in[0,1]^k$ such that
$\sum_{i=1}^k\alpha_i=1$, we obtain from~(\ref{eqCorFirstM})
	\begin{equation}			\label{eqCorFirstM2}
	\frac1n\ln\Erw\brk{Z}=o(1)+
		\max\cbc{-\sum_{i=1}^k\alpha_i\ln(\alpha_i)+\frac d2\ln\brk{1-\sum_{i=1}^k\alpha_i^2}:\alpha\in\cA}.
	\end{equation}
The entropy function $-\sum_{i=1}^k\alpha_i\ln(\alpha_i)$ is well-known to attain its maximum 
at the point $\alpha=\frac1k\vecone$ with all $k$ entries equal to $1/k$.
Furthermore, the sum of squares $\sum_{i=1}^k\alpha_i^2$ attains its minimum
at $\alpha=\frac1k\vecone$ as well.
Hence, the term
	$\frac d2\ln[1-\sum_{i=1}^k\alpha_i^2]$, and thus~(\ref{eqCorFirstM2}), is maximized at $\frac1k\vecone$.
Consequently,
	\begin{eqnarray*}
	\frac1n\ln\Erw\brk Z&=&\ln k+\frac d2\ln(1-1/k)+o(1)
		=\ln k-\frac d2\brk{\frac 1k+\frac1{2k^2}+O_k(k^{-3})}\\
		&=&\ln k-\brk{k\ln k-\frac{\ln k}{2}-\frac c{2}}\cdot\brk{\frac 1k+\frac1{2k^2}+O_k(k^{-3})}
		=\frac c{2k}+O(\ln k/k^2),
	\end{eqnarray*}
as claimed.
\qed\end{proof}

\begin{corollary}\label{Cor_equi}
\Whp\ all $k$-colorings $\sigma$ of $\Gnm$
satisfy $|\sigma^{-1}(i)|=(1+o_k(1))\frac nk$ for all $i\in\brk k$, and
there is no $k$-coloring $\sigma$ such that $|\sigma^{-1}(i)-1/k|>1/(k\ln^4k)$ for more than $\ln^8k$ colors $i\in\brk k$.
\end{corollary}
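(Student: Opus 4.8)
The plan is to derive Corollary~\ref{Cor_equi} from \Cor~\ref{Cor_firstM} and \Lem~\ref{Lemma_firstM} by a routine first-moment (union-bound) argument, exploiting the fact that the function
	$$\Phi(\alpha)=-\sum_{i=1}^k\alpha_i\ln\alpha_i+\frac d2\ln\bc{1-\sum_{i=1}^k\alpha_i^2}$$
is strictly maximized at the barycentre $\alpha=\frac1k\vecone$ and has a quantifiable ``curvature gap'' there. First I would fix a type vector $\nu=(\nu_1,\dots,\nu_k)$ with $\sum_i\nu_i=n$ and, using \Lem~\ref{Lemma_firstM}, write $\frac1n\ln\Erw\brk{Z_\nu}=o(1)+\Phi(\nu/n)$. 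By \Cor~\ref{Cor_firstM} the global maximum equals $\frac{c}{2k}+O_k(\ln k/k^2)$, which is $O_k(1/k)$. So it suffices to show that whenever $\nu$ is ``far from equitable'' in the two senses stated, $\Phi(\nu/n)$ is bounded away from this maximum by a quantity that is $\gg1/k$ (in fact by $\Omega(1)$ in the first case, and by something like $\Omega(1/(k\ln^7k))$ — still $\gg\ln k/k^2$ — in the second), so that $\Erw\brk{Z_\nu}=\exp(-\Omega(n/(k\ln^7k)))$. Summing over the at most $n^k=\exp(O(k\ln n))=\exp(o(n))$ possible type vectors $\nu$ and applying Markov's inequality then kills the probability that any such $\sigma$ exists.

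The technical heart is thus a Taylor expansion of $\Phi$ around $\frac1k\vecone$. Writing $\alpha_i=\frac1k+\beta_i$ with $\sum_i\beta_i=0$, one has $-\sum_i\alpha_i\ln\alpha_i=\ln k-\frac k2\sum_i\beta_i^2+O(k^2\sum_i|\beta_i|^3)$ and $\ln(1-\sum_i\alpha_i^2)=\ln(1-1/k)-\frac{\sum_i\beta_i^2}{1-1/k}+O((\sum_i\beta_i^2)^2)$, so that
	$$\Phi(\alpha)=\Phi\bc{\tfrac1k\vecone}-\bc{\frac k2+\frac{d}{2(1-1/k)}}\sum_{i=1}^k\beta_i^2+(\text{higher order}).$$
Since $d=\Theta(k\ln k)$, the quadratic coefficient is $\Theta(\ln k)$, so $\Phi(\alpha)\leq\Phi(\frac1k\vecone)-\Omega(\ln k)\cdot\norm{\beta}_2^2$ as long as $\norm\beta_2$ is, say, $o(1/k)$ so the cubic remainder is dominated; for $\norm\beta_2$ not that small one checks directly (using concavity of the entropy and of $\log$) that $\Phi(\alpha)$ is already bounded away from the max by $\Omega(1)$. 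In the first regime, ``$|\sigma^{-1}(i)|\neq(1+o_k(1))n/k$ for some $i$'' means some $|\beta_i|\geq\eta/k$ for a fixed small $\eta=\eta(k)$ that we are free to let tend to $0$ slowly; this forces $\norm\beta_2^2\geq\eta^2/k^2$ and hence a gap of order $\ln k\cdot\eta^2/k^2\gg\ln k/k^2$. In the second regime, ``$|\sigma^{-1}(i)-n/k|>n/(k\ln^4k)$ for more than $\ln^8k$ colors'' forces $\norm\beta_2^2\geq\ln^8k\cdot(k\ln^4k)^{-2}=1/(k^2)$ — wait, that is $\ln^8 k\cdot k^{-2}\ln^{-8}k=k^{-2}$, giving a gap of order $\ln k/k^2$, which is exactly the borderline; so here I would instead keep the gap as $\Omega(\ln^8k\cdot\ln k\cdot k^{-2}\ln^{-8}k)$ only after being careful that the $O_k(\ln k/k^2)$ slack in \Cor~\ref{Cor_firstM} does not swamp it, or, more robustly, note that the number of type vectors of this restricted form is only $\exp(O(\ln^8k\cdot\ln n))\ll\exp(n/(k^2))$, which comfortably beats $\exp(-\Omega(n/k^2))$ with room to spare. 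I would present the cleaner version by tracking constants once rather than optimizing.

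The main obstacle I anticipate is bookkeeping the competition between the entropy gap and the $\exp(o(n))$ number of summands in the union bound, together with the $O_k(\ln k/k^2)$ error already present in \Cor~\ref{Cor_firstM}: in the second, quantitatively delicate regime the naive bound $n^k$ on the number of types is too lossy, and one must instead count only the types with many coordinates far from $n/k$ (of which there are $\binom k{\ge\ln^8k}n^{\ln^8k+k^{o(1)}}$ or so), so that the exponent on the ``bad'' side genuinely beats the Markov bound. Everything else — Stirling, the Taylor expansion, and the elementary inequality that $\Phi$ drops by $\Omega(1)$ once $\norm\beta_2=\Omega(1)$ — is routine. A clean way to organize it is: (i) state the quadratic estimate for $\Phi$ as a sub-lemma valid for $\norm\beta_\infty\le k^{-2/3}$, say; (ii) handle $\norm\beta_\infty>k^{-2/3}$ crudely via strict concavity; (iii) do the two union bounds, being explicit about the number of types in each case. \Cor~\ref{Cor_equi} is then immediate, and Part~2 and Part~3 of \Prop~\ref{Prop_firstMoment} follow (Part~1 is \Cor~\ref{Cor_firstM}).
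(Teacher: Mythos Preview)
Your approach is the paper's approach --- first moment over type vectors, Taylor-expand $\Phi$ at the barycentre, union bound --- but an arithmetic slip sent you down an unnecessary detour. The quadratic coefficient $\frac{k}{2}+\frac{d}{2(1-1/k)}$ is $\Theta(k\ln k)$, not $\Theta(\ln k)$ (remember $d\sim 2k\ln k$). With that fixed, your ``second regime'' gives a gap of at least $\Theta(k\ln k)\cdot\norm\beta_2^2\geq\Theta(k\ln k)\cdot k^{-2}=\Theta(\ln k/k)$, which comfortably dominates the $c/(2k)+O_k(\ln k/k^2)$ slack from \Cor~\ref{Cor_firstM}. There is no borderline, and the naive $n^k=\exp(o(n))$ count of type vectors is perfectly adequate; your proposed refinement of the type count is not needed.

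The paper also dispenses with your local/global case split. After eliminating $\alpha_k=1-\sum_{i<k}\alpha_i$, one checks that the Hessian $D^2\Phi$ is negative-definite \emph{everywhere} on the simplex, with all eigenvalues below $-d/(1-\norm\alpha_2^2)\leq-d$: the $-\frac{2d}{1-\norm\alpha_2^2}$ diagonal term contributes $-\frac{d}{1-\norm\alpha_2^2}I$, and the remaining pieces (a diagonal matrix, a constant multiple of the all-ones matrix, and a rank-one matrix) are each negative-semidefinite. Taylor's theorem then gives the global inequality
\[
\Phi(\alpha)\leq\Phi(k^{-1}\vecone)-\frac d2\norm{\alpha-k^{-1}\vecone}_2^2
\]
for \emph{all} $\alpha$ in the simplex, with no cubic remainder to control and no separate treatment of the far-away regime. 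Both parts of the corollary then drop out of the single observation that $\frac d2\norm{\alpha-k^{-1}\vecone}_2^2$ exceeds $\frac{c}{2k}+O_k(\ln k/k^2)$ whenever either $\max_i|\alpha_i-1/k|>(k\ln^{1/3}k)^{-1}$ or more than $\ln^8k$ coordinates deviate by $(k\ln^4k)^{-1}$.
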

\begin{proof}
Let $\nu=(\nu_1,\ldots,\nu_k)$ be a $k$-tuple of non-negative integers such that $\sum_{i=1}^k\nu_i=n$.
We are going to estimate $\Erw[Z_\nu]$ in terms of how much $\nu$ deviates from the ``flat'' vector $(n/k,\ldots,n/k)$.
To this end, we  compute the first two differentials of~(\ref{eqLemma_firstM}).
Set $\alpha=(\alpha_1,\ldots,\alpha_k)=\nu/n$ and let
	$f(\alpha)=-\sum_{i=1}^k\alpha_i\ln\alpha_i+\frac d2\ln(1-\sum_{i=1}^k\alpha_i^2).$
Since $\sum_{i=1}^k\alpha_i=1$, we can eliminate the variable $\alpha_k=1-\sum_{i=1}^{k-1}\alpha_i$.
Hence, we obtain for $i,j\in\brk{k-1}$, $i\neq j$
	\begin{eqnarray}\nonumber
	\frac{\partial f}{\partial\alpha_i}&=&\ln(\alpha_k/\alpha_i)+\frac{d(\alpha_k-\alpha_i)}{1-\norm\alpha_2^2},\\
	\frac{\partial^2 f}{\partial\alpha_i^2}&=&-\frac1{\alpha_k}-\frac1{\alpha_i}-\frac{2d}{1-\norm\alpha_2^2}-\frac{2d(\alpha_k-\alpha_i)^2}{(1-\norm\alpha_2^2)^2},
		\label{eqHessian1}\\
	\frac{\partial^2 f}{\partial\alpha_i\partial\alpha_j}&=&-\frac1{\alpha_k}-\frac d{1-\norm\alpha_2^2}-\frac{2d(\alpha_k-\alpha_i)(\alpha_k-\alpha_j)}{(1-\norm\alpha_2^2)^2}
		\label{eqHessian2}.
	\end{eqnarray}
In particular, the first differential $Df$  vanishes at $\alpha=\frac1k\vecone$.
At this point, the Hessian $D^2f=(\frac{\partial^2f}{\partial\alpha_i\partial\alpha_j})_{i,j\in\brk{k-1}}$ is negative-definite, whence $\alpha=\frac1k\vecone$ is a local maximum.
Because the rank-one matrix $((\alpha_k-\alpha_i)\cdot(\alpha_k-\alpha_j))_{i,j\in\brk{k-1}}$ is positive semidefinite for all $\alpha$,
(\ref{eqHessian1}) and~(\ref{eqHessian2}) show that $D^2f$ is negative-definite for all $\alpha$.
In fact, due to the $-\frac{2d}{1-\norm\alpha_2^2}$ term in~(\ref{eqHessian1}), all its eigenvalues are smaller than $-\frac{d}{1-\norm\alpha_2^2}\leq-d$.
Therefore, Taylor's theorem yields that
	$$f(\alpha)\leq f(k^{-1}\vecone)-\frac d2\norm{\alpha-k^{-1}\vecone}_2^2$$
for all $\alpha$.
Hence, \Cor~\ref{Cor_firstM} implies that
	\begin{equation}\label{eqTaylorBound}
	\frac1n\ln\Erw[Z_\nu]\leq \frac{c}{2k}+O_k(\ln k/k^2)-\frac d2\norm{\alpha-k^{-1}\vecone}_2^2.
	\end{equation}
Since $d=(2-o_k(1))k\ln k$, the right hand side of~(\ref{eqTaylorBound}) is negative if either
\begin{itemize}
\item $\max_{i\in\brk k}|\alpha_i-k^{-1}|>(k\ln^{1/3}k)^{-1}$, or
\item there are more than $\ln^8k$ indices $i\in\brk k$ such that $|\alpha_i-1/k|>(k\ln^4k)^{-1}$.
\end{itemize}
Thus, Markov's inequality shows that \whp\ there is no $k$-coloring with either of these properties.
\qed\end{proof}

\medskip\noindent
Finally, \Prop~\ref{Prop_firstMoment} is immediate from \Lem~\ref{Lemma_independentEdges} and Corollaries~\ref{Cor_firstM} and~\ref{Cor_equi}.

\section{Proof of \Prop~\ref{Prop_core}}\label{Sec_Prop_core}

Suppose $d=2k\ln k-\ln k-c$ with $0\leq c\leq 4$.
Throughout this section we work with the random graph $\Gnm$ with $m$ independent edges.

\subsection{The core}

Let $\sigma:V\ra\brk k$ be a map such that $|\sigma^{-1}(i)|=(1+o_k(1))\frac nk$ for all $i\in\brk k$.
Moreover, let $G'(\sigma)$ be the random multi-graph $\Gnm$ conditional on $\sigma$ being a valid $k$-coloring.
Thus, $G'(\sigma)$ consists of $m$ independent random edges $\vec e_1=(\vec u_1,\vec v_1),\ldots,\vec e_m=(\vec u_m,\vec v_m)$
such that $\sigma(\vec u_i)\neq\sigma(\vec v_i)$ for all $i\in\brk m$.
To prove \Prop~\ref{Prop_core} we need to show that with a very high probability, a large number of vertices of $G'(\sigma)$
will remain ``unscathed'' by the whitening process {\bf WH1--WH2}.
To exhibit such vertices, we consider the following construction.
Let $\ell=\exp(-7)\ln k$ and assume that $k\geq k_0$ is large enough so that $\ell>3$.
Let $V_i=\sigma^{-1}(i)$ for $i\in\brk k$.
\begin{description}
  \item[CR1] For $i\in\brk k$ let $W_i=\cbc{v\in V_i:\exists j\neq i:e(v,V_j)<3\ell}$ and $W=\bigcup_{i=1}^k W_i$.
  \item[CR2] Let $U=\cbc{v\in V:\exists j:e(v,W_j)>\ell}$.
  \item[CR3] Set $Y=U$. While there is a vertex $v \in V \setminus Y$ that has $\ell$ or more neighbors in $Y$, add $v$ to $Y$.
\end{description}
We call the graph $G'(\sigma)-W-Y$ obtained by removing the vertices in $W\cup Y$ the {\bf\em core} of $G'(\sigma)$.
By construction, every vertex $v$ in the core has at least $\ell$ neighbors of each color $j\neq\sigma(v)$ that also belong to the core.
In effect, if $\hat\sigma$ is the outcome of the whitening process applied to $G'(\sigma)$, then
$\hat\sigma(v)=\sigma(v)$ for all vertices $v$ in the core.

The construction {\bf CR1--CR3} has been considered previously to show that a {\em random} $k$-coloring of the random graph $\gnm$
has many frozen vertices \whp~\cite{Barriers,ACOVilenchik}.
In the present context we need to perform a rather more thorough analysis of the process {\bf CR1--CR3} to show that \whp\
{\em all} $k$-colorings $\sigma$ of  $\gnm$ induce a non-zero cover $\hat\sigma$.
To obtain such a strong result, we need to control the large deviations of various quantities, particularly the sizes of the sets $W$, $W_i$ and $U$.
More precisely, in \Sec~\ref{Sec_Lem_size_of_W} we prove

\begin{lemma}\label{Lem_size_of_W}
With probability at least $1-\exp(-16n/k)$ the random graph $G'(\sigma)$ has the following properties.
\begin{enumerate}
\item We have $|W|\leq nk^{-0.7}$.
\item For all $i\in\brk k$ we have $|W_i|\leq\frac{n\ln\ln k}{k\ln k}$.
\item There are no more than $\ln^4k$ indices $i\in\brk k$ such that $|W_i|\geq \frac{n}{k\ln^4 k}$.
\end{enumerate}
\end{lemma}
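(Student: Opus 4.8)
The plan is to work throughout in the random graph $G'(\sigma)$ and to begin by \emph{revealing the colour pattern} $\vec c=(\vec c_t)_{t\in\brk m}$ of the edges, where $\vec c_t=(\sigma(\vec u_t),\sigma(\vec v_t))$ records which ordered pair of colour classes the $t$-th edge joins. Since the edges of $G'(\sigma)$ are independent and each is uniform among the ordered bichromatic vertex pairs, the $\vec c_t$ are i.i.d., and because $|V_i|=(1+o_k(1))n/k$ for all $i$, the number $m_{ij}$ of edges between $V_i$ and $V_j$ is a binomial variable with mean $\Erw\brk{m_{ij}}=(1+o_k(1))\frac{2n\ln k}{k}$. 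The first, routine move is to let $\cG_1$ be the event that $m_{ij}\in(1\pm o_k(1))\Erw\brk{m_{ij}}$ for all pairs $i\neq j$, with an $o_k(1)$ relative error (of order $1/\sqrt{\ln k}$) chosen large enough that the Chernoff bound (\Lem~\ref{Lemma_Chernoff}) gives $\pr\brk{m_{ij}\notin(1\pm o_k(1))\Erw\brk{m_{ij}}}\leq\exp(-30n/k)$ for each pair; a union bound over the $<k^2$ pairs then yields $\pr\brk{\overline{\cG_1}}\leq\exp(-20n/k)$ for $n\geq n_0(k)$.

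The crucial step is to pin down the dependence structure of the counts $e(v,V_j)$ conditionally on $\vec c$. The point is that, given $\vec c$, for every ordered pair $(i,j)$ with $i\neq j$ the $V_i$-endpoints of the $m_{ij}$ edges joining $V_i$ and $V_j$ are $m_{ij}$ independent uniform elements of $V_i$, \emph{and} these families, over all ordered pairs, are mutually independent -- simply because the two endpoints of each edge are sampled independently given its colour pattern. Consequently, conditionally on $\vec c$, the occupancy vectors $\bc{(e(v,V_j))_{v\in V_i}}$, one per ordered pair, are mutually independent, each being the occupancy vector of a balls-into-bins experiment with $m_{ij}$ balls and $|V_i|$ bins; in particular $e(v,V_j)\sim\Bin(m_{ij},1/|V_i|)$ has mean $\mu_{ij}=m_{ij}/|V_i|$, which on $\cG_1$ equals $(2-o_k(1))\ln k$. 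Two consequences will carry the argument: (i) conditionally on $\vec c$ the collection $(e(v,V_j))_{v\in V,\,j\neq\sigma(v)}$ is negatively associated -- it is a union, over unordered colour pairs, of independent negatively associated occupancy vectors -- so $|W|=\sum_v\vecone\cbc{v\in W}$ and each $|W_i|$, being sums of $\cbc{0,1}$-valued, monotone, disjointly supported functions of this collection, obey Chernoff-type upper tail bounds; equivalently one may Poissonise every occupancy vector via \Cor~\ref{Cor_BallsAndBins} at the cost of a harmless $n^{O(k^2)}$ factor, after which the $e(v,V_j)$ are outright independent. (ii) For distinct $i,i'$ the variables $|W_i|$ and $|W_{i'}|$ are functions of disjoint sub-families of the independent family consisting of the $V_a$-endpoints of the edges meeting colour $a$, over all $a\in\brk k$; hence the events ``$|W_i|$ large'' are mutually independent given $\vec c$.

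The whole estimate then rests on one instance of \Lem~\ref{Lemma_Chernoff}: on $\cG_1$ we have $\mu_{ij}=(2-o_k(1))\ln k$ while $3\ell=3\exp(-7)\ln k\ll\mu_{ij}$, so the lower tail gives $\pr\brk{e(v,V_j)<3\ell\mid\cG_1}\leq\exp(-\mu_{ij}\varphi(3\ell/\mu_{ij}-1))\leq k^{-1.9}$ -- this is exactly where the constant $\exp(-7)$ is calibrated, since $\varphi(\theta-1)\to1$ as $\theta\downarrow0$. A union bound over the $k-1$ colours $j\neq i$ gives $\pr\brk{v\in W_i\mid\cG_1}\leq k^{-0.9}$, hence $\Erw\brk{|W_i|\mid\cG_1}\leq|V_i|\,k^{-0.9}\leq 2nk^{-1.9}$ and $\Erw\brk{|W|\mid\cG_1}\leq 2nk^{-0.9}$ for $k$ large. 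Now part~1 follows because $|W|$ is a sum of conditionally negatively associated $\cbc{0,1}$-variables with conditional mean $\ll nk^{-0.7}$, so $\pr\brk{|W|>nk^{-0.7}\mid\cG_1}\leq\exp(-\Omega(nk^{-0.7}\ln k))\leq\exp(-20n/k)$. Part~2 follows by the same estimate applied to each $|W_i|$: $\pr\brk{|W_i|>\tfrac{n\ln\ln k}{k\ln k}\mid\cG_1}\leq\exp(-\Omega(\tfrac{n\ln\ln k}{k}))$ -- the factor $\ln\ln k$ in the target being precisely what forces the exponent past $20n/k$ -- followed by a union bound over the $k$ colours. For part~3, $q:=\pr\brk{|W_i|\geq\tfrac{n}{k\ln^4k}\mid\cG_1}\leq\exp(-\Omega(\tfrac{n}{k\ln^3k}))$, and since by (ii) the events $\cbc{|W_i|\geq n/(k\ln^4k)}$ are mutually independent given $\vec c$, $\pr\brk{\#\cbc{i\in\brk k:|W_i|\geq n/(k\ln^4k)}>\ln^4k\mid\cG_1}\leq\binom{k}{\ln^4k}q^{\ln^4k}\leq\exp(\ln^5k-\Omega(\tfrac{n\ln k}{k}))\leq\exp(-20n/k)$ for $n\geq n_0(k)$. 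Combining $\pr\brk{\overline{\cG_1}}$ with these conditional failure probabilities gives the claimed bound $1-\exp(-16n/k)$.

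The hard part is the second step. The crude marginal control of the $e(v,V_j)$ only yields Markov-type bounds on $|W|$ and $|W_i|$, which are far too weak for an $\exp(-\Omega(n/k))$ guarantee: a single $|W_i|$ concentrates to within a polylogarithmic factor only with failure probability of order $\exp(-\Omega(n/(k\,\mathrm{polylog}\,k)))$, so a naive union bound over the $k$ colours would be fatal. The resolution is to notice that conditioning on the colour pattern and viewing each edge through its two \emph{independently sampled} class-endpoints makes the relevant occupancy vectors mutually independent across ordered colour pairs and negatively associated within a class; this simultaneously supplies the sharp exponential tails needed for parts~1 and~2 and the exact independence of the per-colour ``bad'' events that part~3 uses. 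Everything else is the calibration of $\exp(-7)$ -- making sure $3\exp(-7)\ln k$ sits far enough below the mean $\approx 2\ln k$ that $\pr\brk{e(v,V_j)<3\ell}\leq k^{-1.9}$ -- together with routine bookkeeping of the $o_k(1)$ and $o(1)$ error terms.
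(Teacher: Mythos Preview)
Your proposal is correct and follows essentially the same route as the paper: condition on the inter-class edge counts (your colour pattern $\vec c$ versus the paper's event $\cM$), decouple the per-vertex counts $e(v,V_j)$ via Poissonisation (the paper's choice, which you list as the alternative to negative association), obtain $\pr\brk{e(v,V_j)<3\ell}\leq k^{-1.9}$ from the lower-tail Chernoff bound, and then Chernoff again on the resulting binomial counts $|W_i|$ and $|W|$. The only cosmetic difference is that you note the conditional mutual independence of the $|W_i|$ directly from the disjointness of the ``$V_i$-endpoint'' families, whereas the paper obtains the independence of the Poissonised analogues $\cW_i$ after Poissonisation; both yield the same product bound for part~3.
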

Moreover, in \Sec~\ref{Sec_Lem_size_of_U} we are going to establish

\begin{lemma}\label{Lem_size_of_U}
In $G'(\sigma)$ we have
	$\pr\brk{\abs U>\frac{n\ln\ln k}{k\ln k}}\leq\exp(-10n/k).$
\end{lemma}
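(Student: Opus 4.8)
The goal is to bound $\pr[|U|>n\ln\ln k/(k\ln k)]$ in the model $G'(\sigma)$, where $U$ is the set of vertices $v$ having more than $\ell=\exp(-7)\ln k$ neighbors inside some single colour-restricted bad set $W_j$. The first move is to observe that $U$ is a monotone function of the pair $(W,E)$, so I would condition on the outcome of {\bf CR1}, i.e.\ on the sets $W_1,\dots,W_k$ and their sizes. By Lemma~\ref{Lem_size_of_W} we may assume, at a cost of an additive $\exp(-16n/k)$ in the probability, that $|W|\le nk^{-0.7}$, that $|W_i|\le n\ln\ln k/(k\ln k)$ for every $i$, and that at most $\ln^4 k$ indices $i$ have $|W_i|\ge n/(k\ln^4 k)$. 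Conditioning on {\bf CR1}, the remaining randomness is the placement of the $m$ edges; since we work in the independent-edge model $\Gnm$ restricted to bichromatic edges, I can use Corollary~\ref{Cor_BallsAndBins} / the Poissonization lemma to replace the edge counts $e(v,W_j)$ by independent Poisson variables, paying only a $O(\sqrt m)=O(\sqrt n)$ factor, which is harmless against the $\exp(-10n/k)$ we are aiming for.

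**The core estimate.** After Poissonization, for a fixed vertex $v$ and a fixed colour $j\ne\sigma(v)$, the number of $v$--$W_j$ edges is (stochastically dominated by) a Poisson variable with mean $\mu_{v,j}\approx d|W_j|/n$ up to lower-order corrections coming from the colour constraint; in the worst admissible case $|W_j|\le n\ln\ln k/(k\ln k)$, so $\mu_{v,j}\le (1+o_k(1))\cdot 2\ln\ln k\cdot\frac{k\ln k}{k\ln k}\cdot\tfrac{1}{?}$ — more precisely $\mu_{v,j}=O(\ln\ln k)$, which is far below the threshold $\ell=\exp(-7)\ln k$. Hence by the Chernoff bound of Lemma~\ref{Lemma_Chernoff} (the $t>1$ form, $\pr[X>t\mu]\le\exp(-t\mu\ln(t/\eul))$ with $t\mu=\ell$) we get $\pr[e(v,W_j)>\ell]\le\exp(-\Omega(\ell\ln(\ell/\ln\ln k)))=\exp(-\Omega(\ell\ln\ln k))=k^{-\omega(1)}$ for large $k$. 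Union-bounding over the $k$ colours, $\pr[v\in U]\le k\cdot k^{-\omega(1)}=k^{-\omega(1)}=:p$. Thus $|U|$ is stochastically dominated by $\Bin(n,p)$ with $p=k^{-\omega(1)}$, in particular $np\le n/(k\ln k)\cdot o_k(1)$. Applying the Chernoff bound once more to $\Bin(n,p)$ with target $t\mu$-value $T=n\ln\ln k/(k\ln k)\gg np$, we obtain $\pr[|U|>T]\le\exp(-\Omega(T\ln(T/(np))))$. Since $T/(np)\ge k^{\omega(1)}$ and $T\ge n/(k\ln^2 k)$ say, the exponent is $\Omega(T\cdot\omega(1)\ln k)=\omega(n/k)$, which comfortably beats $10n/k$. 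Finally combine with the $O(\sqrt n)$ Poissonization factor and the $\exp(-16n/k)$ failure probability of Lemma~\ref{Lem_size_of_W}; all of these are absorbed, giving $\pr[|U|>n\ln\ln k/(k\ln k)]\le\exp(-10n/k)$ for $k\ge k_0$.

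**Where the care is needed.** The genuinely delicate point is the handling of the vertices lying in the few colour classes with a large $W_i$: the refined clause~3 of Lemma~\ref{Lem_size_of_W} matters because without it one cannot rule out that a constant fraction of the colours $j$ each contribute a non-trivial chance that $e(v,W_j)>\ell$. With at most $\ln^4 k$ "big" classes (each of size $\le n\ln\ln k/(k\ln k)$ still) and all remaining classes of size $\le n/(k\ln^4 k)$, the per-vertex Poisson means are either $O(\ln\ln k)$ (for the $\le\ln^4 k$ big classes) or $O(1/\ln^3 k)$ (for the rest), and in both regimes $\ell$ is super-constant-factor larger, so the Chernoff tail $\exp(-\Omega(\ell\ln(\ell/\mu)))$ is $k^{-\omega(1)}$; summing the $\le\ln^4 k$ larger contributions and the $\le k$ tiny ones still yields $\pr[v\in U]=k^{-\omega(1)}$. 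A second subtlety is that the edges are not literally independent Poisson across vertices — a single edge $\vec e_i$ touches two vertices — but since we only need an upper bound on $|U|$ and $U$ is monotone increasing in the edge set, the Poissonization corollary together with a union bound over vertices suffices: I do not need joint independence of the events $\{v\in U\}$, only a bound on each marginal and then the standard binomial-domination argument for the sum, which is valid because $|U|=\sum_v \mathbf 1\{v\in U\}$ and a Chernoff-type bound on a sum of (possibly dependent) indicators each with small mean can be obtained by a first-moment/Markov bound on $\binom{|U|}{s}$ for $s=T$ — i.e.\ $\pr[|U|\ge T]\le\binom nT p_{\mathrm{set}}$ where $p_{\mathrm{set}}$ bounds the probability that a fixed $T$-set is contained in $U$, and $p_{\mathrm{set}}\le$ (product over the set of the conditionally-independent edge-placement events) works because distinct vertices' neighbourhoods into the fixed sets $W_j$ are determined by disjoint coordinates of the edge tuple once we further expose which edges are incident to which vertices. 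Making that last reduction clean is the main bookkeeping obstacle; everything else is a routine Chernoff computation.
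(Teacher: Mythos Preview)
Your plan contains a genuine circularity that the paper's proof is specifically engineered to avoid. You write that you ``condition on the outcome of {\bf CR1}, i.e.\ on the sets $W_1,\dots,W_k$'' and then that ``the remaining randomness is the placement of the $m$ edges.'' But the sets $W_j$ are \emph{determined by} the edge placements: $v\in W_i$ iff $e(v,V_j)<3\ell$ for some $j\neq i$. Once you condition on $W_1,\dots,W_k$ the edges are no longer uniform, so the Poissonization you invoke for $e(v,W_j)$ is not justified. Nor can you repair this by a union bound over all admissible tuples $(W_1,\dots,W_k)$: even using the refined size profile from \Lem~\ref{Lem_size_of_W}, the number of such tuples is $\exp(\omega(n/k))$, which swamps the target $\exp(-10n/k)$.

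The paper breaks the circularity by conditioning not on $W$ but on the entire degree vector $\vec d=(d_{vj})_{v,j}$. This vector determines $W$ (so no further union bound over $W$ is needed), yet given $\vec d$ there is still genuine residual randomness: for each $v\in V_i$, \emph{which} vertices of $V_j$ its $d_{vj}$ edges hit is uniform. One can then model $e(v,W_j)$ as a binomial $\Bin(d_{vj},w_{ji}/m_{ji})$, whose mean is $O(\ln\ln k)$ provided (a) the total edge count $w_{ji}=e(W_j,V_i)$ is at most $2n\ln\ln k/k$, and (b) $d_{vj}\le100\ln k$. Point~(a) is secured by \Lem~\ref{Lemma_eWiVi}, which union-bounds over subsets $S$ of a \emph{single} colour class $V_i$ (only $2^{\nu_i}\le\exp(O(n/k))$ sets, so the union bound does go through there). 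Point~(b) fails for a small set $\cT$ of ``freak'' high-degree vertices, and a separate estimate (\Lem~\ref{Lemma_degreesFreak}) shows $|\cT|\le n/(4k\ln k)$; these are simply absorbed into $U$. Your proposal does not address high-degree vertices at all, and without controlling them the per-vertex bound $\pr[e(v,W_j)>\ell]=k^{-\omega(1)}$ is false in general, since a vertex with $d_{vj}$ of order $k$ can easily have $\ell$ neighbours in $W_j$ even when $|W_j|$ is small.
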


\noindent
To estimate the size of $Y$ we use the following observation.

\begin{lemma}\label{Lemma_expansion}
\Whp\ the random graph $\Gnm$ has the following property.
\begin{equation}\label{eqLemma_expansion}
\parbox[c]{12cm}{For any set $\cY\subset V$ of size $|\cY|\leq\lceil\frac{2n\ln\ln k}{k\ln k}\rceil$ we have
	$e(\cY)<\frac{\ell}2|\cY|$}
\end{equation}
\end{lemma}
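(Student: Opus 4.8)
The plan is to run a first moment / union bound over all small vertex sets. Put $s:=\lceil 2n\ln\ln k/(k\ln k)\rceil$, and for $1\le y\le s$ let $\cB_y$ be the event that some $\cY\subset V$ with $|\cY|=y$ has $e(\cY)\ge\frac\ell2 y$. Fix a set $\cY$ with $|\cY|=y$. In $\Gnm$ the $m$ edges are independent uniformly random elements of $V\times V$, so $e(\cY)$ is a binomial variable with parameters $m$ and $(y/n)^2$ and mean $\mu=my^2/n^2$. Since $d\le2k\ln k$ we have $m\le2k\ln k\cdot n$ for $n$ large, and since $y\le s$ one checks that $\frac{\ell y/2}{\mu}=\frac{\ell n^2}{2my}>1$ (in fact it is $\omega(1)$) once $k\ge k_0$, because $\ell=\eul^{-7}\ln k$ and $s/n=\Theta(\ln\ln k/(k\ln k))$. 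Hence \Lem~\ref{Lemma_Chernoff} gives
\[
\pr\brk{e(\cY)\ge\tfrac\ell2 y}\ \le\ \bc{\tfrac{2\eul m y}{\ell n^2}}^{\ell y/2}\ \le\ \bc{\tfrac{4\eul^{8}k y}{n}}^{\ell y/2}.
\]
Multiplying by $\binom ny\le(\eul n/y)^y$ and using the identity $y^{\ell/2-1}n^{1-\ell/2}=(y/n)^{\ell/2-1}$ yields
\[
\pr\brk{\cB_y}\ \le\ \bc{A_k\,(y/n)^{\ell/2-1}}^{y},\qquad A_k:=\eul\,(4\eul^{8}k)^{\ell/2}.
\]

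Next I would sum this over $1\le y\le s$. Since $\ell>3$ we have $\ell/2-1\ge\tfrac12>0$, so $y\mapsto A_k(y/n)^{\ell/2-1}$ is increasing on $[1,s]$; set $\beta_k:=A_k(s/n)^{\ell/2-1}$. A short computation with $s/n=\Theta(\ln\ln k/(k\ln k))$ shows $\ln\beta_k=\Theta(\ln k)-\Omega(\ln k\cdot\ln\ln k)\to-\infty$, because the $(s/n)^{\ell/2-1}$ factor beats $A_k$ once $\ell=\Theta(\ln k)$ is large; hence $\beta_k\le\tfrac12$ for $k\ge k_0$. Now split the range at $y_0:=\lceil\ln n\rceil$. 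For $y_0<y\le s$ we have $A_k(y/n)^{\ell/2-1}\le\beta_k\le\tfrac12$, so $\pr[\cB_y]\le2^{-y}$ and $\sum_{y>y_0}2^{-y}=o_n(1)$. For $1\le y\le y_0$ we have $A_k(y/n)^{\ell/2-1}\le A_k(\ln n/n)^{\ell/2-1}<1$, so $\pr[\cB_y]\le A_k(\ln n/n)^{\ell/2-1}$, and the at most $\ln n$ such terms sum to at most $A_k(\ln n)^{\ell/2}n^{1-\ell/2}=o_n(1)$ since $\ell/2>1$. A union bound over $y$ therefore gives $\pr\brk{\exists\,\cY,\ |\cY|\le s,\ e(\cY)\ge\tfrac\ell2|\cY|}=o_n(1)$, which is exactly the negation of (\ref{eqLemma_expansion}) for all sets of size up to $s$, as required.

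The argument is essentially a routine first-moment computation using nothing about $\Gnm$ beyond the independence of its edges; the one point that needs a little care is that the per-size bound $[A_k(y/n)^{\ell/2-1}]^y$ is \emph{not} uniformly $o_n(1)$: when $y$ is a constant fraction of $s$ it is only $\beta_k^y$, a constant in $n$ raised to a growing power. The fix is the split above, using $\beta_k<1$ (which is precisely where the super-polynomial growth $\ell=\Theta(\ln k)$, hence $\ell/2-1\ge\tfrac12$, enters) to get geometric decay in $y$ for the larger sets, and the genuine $n^{1-\ell/2}$ factor to dispose of the $O(\log n)$ smallest sizes.
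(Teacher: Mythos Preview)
Your argument is correct and follows essentially the same approach as the paper: a first-moment/union bound over small vertex sets combined with the Chernoff bound on $e(\cY)$. The only cosmetic difference is that the paper massages the per-size probability down to $\exp[|\cY|\ln(|\cY|/n)]=(|\cY|/n)^{|\cY|}$, which already sums directly over $1\le|\cY|\le s$ without needing your split at $y_0=\lceil\ln n\rceil$; your split achieves the same end by a slightly different bookkeeping route.
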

\begin{proof}
For any fixed set $\cY$ of size $0<yn\leq \lceil\frac{2n\ln\ln k}{k\ln k}\rceil$ the number $e(\cY)$ of edges spanned by $\cY$ in $\Gnm$
is binomially distributed  with mean 
	$$\Erw\brk{e(\cY)}=\frac{m(yn)^2}{n^2}=(1+o_k(1))y^2dn/2\leq 2y^2nk\ln k$$
Hence, by the Chernoff bound 
	\begin{equation}\label{eqLem_size_of_Z2a}
	\pr\brk{e(\cY)\geq y\ell n/2}
		\leq\exp\brk{\frac{y\ell n}2\ln\bcfr{y\ell n/2}{\eul\cdot\Erw\brk{e(\cY)}}}\leq\exp\brk{\frac{y\ell n}{3}\ln(ky)}.
	\end{equation}
Since $ky\leq 3\ln\ln k/\ln k$ and $\ell=\Omega_k(\ln k)$,  (\ref{eqLem_size_of_Z2a}) yields
	\begin{equation}\label{eqLem_size_of_Z2}
	\pr\brk{e(\cY)\geq y\ell n/2}\leq\exp\brk{3yn\ln(y)}.
	\end{equation}
Further, by Stirling's formula the total number of sets $\cY\subset V$ of size $yn$ is
	\begin{equation}\label{eqLem_size_of_Z3}
	\bink n{yn}\leq\exp\brk{yn(1-\ln y)}\leq\exp(-2yn\ln y).
	\end{equation}
Combining (\ref{eqLem_size_of_Z2}) and (\ref{eqLem_size_of_Z3}) with the union bound, we obtain
	$$\pr\brk{\exists \cY\subset V:|\cY|=yn,\,e(\cY)\geq \ell\abs\cY/2}\leq\exp\brk{y n\ln(y)}.$$
Taking the union bound over all possible sizes $yn$ completes the proof.
\qed\end{proof}

\medskip
\noindent{\em Proof of Proportion \ref{Prop_core}.} 
By \Prop~\ref{Prop_firstMoment} \whp\ all $k$-colorings $\sigma$ of the random graph $\Gnm$ satisfy
$|\sigma^{-1}(i)|=(1+o_k(1))n/k$ for all $i\in\brk k$.
Let us call such a $k$-coloring $\sigma$ of $G=\Gnm$ 
{\em good} if it has the following two properties (and {\em bad} otherwise):
\begin{description}
\item[G1.] Step {\bf CR1} applied to $G,\sigma$ yields sets $W_1,\ldots,W_k,W$ that satisfy the three properties in \Lem~\ref{Lem_size_of_W}.
\item[G2.] The set $U$ created in step {\bf CR2} has size $\abs U>\frac{n\ln\ln k}{k\ln k}$.
\end{description}
Let $Z_{\mathrm{bad}}$ be the number of bad $k$-colorings of $\Gnm$.
Since $G'(\sigma)$ is just the random graph $\Gnm$ conditional on $\sigma$ being a $k$-coloring, we have
	\begin{eqnarray*}
	\Erw\brk{Z_{\mathrm{bad}}}&=&\sum_{\sigma}\pr\brk{\sigma\mbox{ is a $k$-coloring of $\gnm$}}\cdot\pr\brk{\sigma\mbox{ is bad in }\Gnm|
			\sigma\mbox{ is a $k$-coloring}}\\
		&=&\sum_{\sigma}\pr\brk{\sigma\mbox{ is a $k$-coloring of $\gnm$}}\cdot\pr\brk{\sigma\mbox{ is bad in }G'(\sigma)}\\
		&\leq&2\exp(-10n/k)\cdot\sum_{\sigma}\pr\brk{\sigma\mbox{ is a $k$-coloring of $\gnm$}}\qquad\mbox{[by \Lem s~\ref{Lem_size_of_W} and~\ref{Lem_size_of_U}]}\\
		&\leq&2\exp(-10n/k)\cdot\exp(cn/k+o(n))\qquad\qquad\qquad\qquad\quad\mbox{ [by \Prop~\ref{Prop_firstMoment}]}\\
		&\leq&\exp(-6n/k+o(n))=o(1).\qquad\qquad\qquad\qquad\qquad\qquad\mbox{\,[as $c\leq 4$]}
	\end{eqnarray*}
Hence, \whp\ the random graph $\Gnm$ does not have a bad $k$-coloring.

Now, consider a good $k$-coloring $\sigma$.
By \Lem~\ref{Lemma_expansion}, we may assume that~(\ref{eqLemma_expansion}) holds.
To bound the size of the set $Y$ created by step {\bf CR3},
observe that each vertex that is added to $Y$ contributes $\ell$ extra edges to the subgraph spanned by $Y$.
Thus, assume that $|Y|>\frac{2n\ln\ln k}{k\ln k}$ and consider  the first time step {\bf CR3} has got a set $Y'$ 
of size  $\lceil \frac{2n\ln\ln k}{k\ln k}\rceil$.
Then $Y'$ spans at least $y\ell n/2$ edges, in contradiction to~(\ref{eqLemma_expansion}).
Hence, \whp\ $\Gnm$ is such that
	\begin{equation}\label{eqGoodY}
	\mbox{for {\em any} good $k$-coloring the set $Y$ constructed by {\bf CR3} has size at most $ \frac{2n\ln\ln k}{k\ln k}$.}
	\end{equation}

If~(\ref{eqGoodY}) is true and $\Gnm$ does not have a bad $k$-coloring, then for any $k$-coloring $\sigma$ the set $W\cup Y$ constructed by {\bf CR1--CR3}
has size at most $|W\cup Y|\leq k^{-0.7}n+ \frac{2n\ln\ln k}{k\ln k}\leq nk^{-2/3}$ (the bound on $|W|$ follows from {\bf G1}).
This shows the first property asserted in \Prop~\ref{Prop_core}, because
 the construction {\bf CR1--CR3} ensures that the
 cover $\hat\sigma$ obtained from $\sigma$ via the whitening process {\bf WH1--WH2} satisfies $\hat\sigma(v)=\sigma(v)$
for all $v\in V\setminus(W\cup Y)$.
By the same token, the second assertion follows because by {\bf G1} and~(\ref{eqGoodY}) for every color $i\in\brk k$ we have
	$$|\sigma^{-1}(i)\cap (W\cup Y)|\leq|W_i|+|Y|=n\cdot o_k(1/k).$$
Finally, the {\bf G1} and~(\ref{eqGoodY}) also imply that
there cannot be more than $\ln^9k$ indices $i\in\brk k$ such that $|\sigma^{-1}(i)-n/k|>n/(k\ln^3k)$,
which is the third assertion.
\qed

\medskip\noindent{\em Proof of \Cor~\ref{Cor_allFrozen}.}
We claim that the vertices in $F=V\setminus(W\cup Y)$ are $\delta$-frozen \whp\ for $\delta=1/(k\ln k)$.
Indeed, assume that $\tau$ is another $k$-coloring such that the set $\Delta=\cbc{v\in F:\tau(v)\neq\sigma(v)}$
has size
	$0<\abs\Delta<\delta n.$
Every vertex $v\in\Delta$ has at least $\ell$ neighbors in $\Delta$.
Indeed, the construction {\bf CR1--CR3} ensures that every vertex $v\in\Delta$ has at least $\ell$ neighbors colored $\tau(v)\neq\sigma(v)$ in $\sigma$.
Because $\tau$ is a $k$-coloring, all of these neighbors must belong to $\Delta$ as well.
Hence, $\Delta$ violates~(\ref{eqLemma_expansion}).
Thus, the assertion follows from \Lem~\ref{Lemma_expansion}.
\qed

\subsection{Proof of \Lem~\ref{Lem_size_of_W}}\label{Sec_Lem_size_of_W}

We begin by estimating the number of edges between different color classes.
Recall that $V_i=\sigma^{-1}(i)$ for $i\in\brk k$, and that we are assuming that $|V_i|=(1+o_k(1))n/k$.
Let $\nu_i=|V_i|$ for $i=1,\ldots,k$.

\begin{lemma}\label{Lemma_fixEdgeCounts}
In $G'(\sigma)$ we have
	\begin{eqnarray*}
	\pr\brk{\min_{1\leq i<j\leq k}e(V_i,V_j)\leq \frac{0.99dn}{k^2}}&\leq&\exp(-11 n/k)\quad\mbox{ and }\quad\\
	\pr\brk{\max_{1\leq i<j\leq k}e(V_i,V_j)\geq \frac{1.01dn}{k^2}}&\leq&\exp(-11 n/k).
	\end{eqnarray*}
\end{lemma}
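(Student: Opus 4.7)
The plan is to observe that, conditional on $\sigma$ being a valid $k$-coloring, each of the $m$ independent random edges of $G'(\sigma)$ is distributed as a uniformly random ordered pair of vertices with distinct $\sigma$-colors. Consequently, for any fixed pair $i\neq j$, the probability that a single edge falls between $V_i$ and $V_j$ equals
\begin{equation*}
p_{ij}=\frac{2\nu_i\nu_j}{n^2-\sum_{l=1}^k\nu_l^2},
\end{equation*}
and $e_{G'(\sigma)}(V_i,V_j)$ is a binomial random variable $\mathrm{Bin}(m,p_{ij})$ because the $m$ edges are drawn independently.

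Next I would use the hypothesis $\nu_l=(1+o_k(1))n/k$ for every $l$ to show that
\begin{equation*}
p_{ij}=(1+o_k(1))\cdot\frac{2}{k(k-1)}=\frac{(1+o_k(1))\cdot 2}{k^2},\qquad \mu_{ij}:=mp_{ij}=(1+o_k(1))\frac{dn}{k^2}.
\end{equation*}
Since $d=2k\ln k-\ln k-c$ with $c\leq 4$, this gives $\mu_{ij}=(2+o_k(1))\frac{n\ln k}{k}$. Thus the deviation event $\{|e(V_i,V_j)-dn/k^2|\geq 0.01\,dn/k^2\}$ is, for $k$ large enough, implied by $\{|e(V_i,V_j)-\mu_{ij}|\geq \tfrac{1}{200}\mu_{ij}\}$.

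I would then apply the Chernoff bound (\Lem~\ref{Lemma_Chernoff}) in both directions to this binomial:
\begin{equation*}
\pr\!\left[\,\bigl|e(V_i,V_j)-\mu_{ij}\bigr|\geq\tfrac{\mu_{ij}}{200}\right]\leq 2\exp\!\bigl(-\mu_{ij}\cdot\varphi(1/200)\bigr)\leq\exp\!\bigl(-\eta\,\mu_{ij}\bigr)
\end{equation*}
for some absolute constant $\eta>0$ (since $\varphi(\pm 1/200)$ is a positive constant). Plugging in $\mu_{ij}=\Theta(n\ln k/k)$ gives a bound of the form $\exp(-\Omega(n\ln k/k))$ for each individual pair $(i,j)$.

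Finally, I would take the union bound over the $\binom{k}{2}\leq k^2$ unordered pairs, producing an extra factor of $\exp(2\ln k)$. For $k$ sufficiently large this is easily absorbed, yielding
\begin{equation*}
\pr\!\left[\,\exists i<j:\bigl|e(V_i,V_j)-dn/k^2\bigr|\geq 0.01\,dn/k^2\right]\leq k^2\exp(-\Omega(n\ln k/k))\leq\exp(-11n/k),
\end{equation*}
which yields both inequalities in the lemma. There is no genuine obstacle here; the only delicate point is controlling the $o_k(1)$ slack between $p_{ij}$ and its idealized value $2/k^2$, which is why we have room for the constants $0.99$ and $1.01$ rather than sharper ratios.
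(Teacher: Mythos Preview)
Your proposal is correct and follows essentially the same route as the paper: identify $e(V_i,V_j)$ as $\mathrm{Bin}(m,p_{ij})$ with $p_{ij}=2\nu_i\nu_j/(n^2-\sum_l\nu_l^2)=(2+o_k(1))/k^2$, apply the Chernoff bound to get a per-pair failure probability of $\exp(-\Omega(n\ln k/k))$, and finish with a union bound over the $\binom{k}{2}$ pairs. The only cosmetic difference is that the paper treats the lower tail explicitly and dismisses the upper tail as analogous, whereas you handle both tails in one stroke.
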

\begin{proof}
Because the edges $\vec e_1,\ldots,\vec e_m$ are chosen independently,
for any pair $1\leq i<j\leq k$ the random variable $e(V_i,V_j)$ has a binomial distribution
	$\Bin(m,q_{ij})$, where
		$$q_{ij}=\frac{2\nu_i \nu_j}{n^2-\sum_{l=1}^k\nu_l^2}\geq\frac{2\nu_i\nu_j}{n^2}.$$
Since we are assuming that $\nu_i,\nu_j=(1+o_k(1))\frac nk$, we have $q_{ij}\geq(2+o_k(1))/k^2$.
Thus, 
	$\Erw\brk{e(V_i,V_j)}=mq_{ij}\geq
		(1+o_k(1))dn/{k^2}.$
Hence, the Chernoff bound yields
	$$\pr\brk{e(V_i,V_j)\leq \frac{0.99dn}{2k^2}}\leq\exp\brk{-\frac{dn}{8\cdot 10^4k^2}}\leq\exp(-12 n/k).$$
Finally, the first assertion follows by taking a union bound over $i,j$.
The second assertion follows analogously.
\qed\end{proof}

\medskip\noindent{\em Proof of \Lem~\ref{Lem_size_of_W}.}
By \Lem~\ref{Lemma_fixEdgeCounts} we may disregard the case that
$\min_{1\leq i<j\leq k}e(V_i,V_j)\leq \frac{0.99dn}{k^2}$.
Thus, fix integers $(m_{ij})_{1\leq i<j\leq k}$ such that
	\begin{equation}\label{eqLem_size_of_W1}
	m_{ij}\geq\frac{0.99dn}{k^2}\quad\mbox{ and }\quad\sum_{1\leq i<j\leq k}m_{ij}=m.
	\end{equation}
Let $\cM$ be the event that $e(V_i,V_j)=m_{ij}$ for all $1\leq i<j\leq k$.

We need to get a handle on the random variables $(e(v,V_j))_{v\in V_i}$ 
	(i.e., the number of neighbors of $v$ in $V_j$)
	in the random graph $G'(\sigma)$.
Given that $\cM$ occurs we know that $\sum_{v\in V_i}e(v,V_j)=e(V_i,V_j)=m_{ij}$.
Furthermore, because $G'(\sigma)$ consists of $m$ \emph{independent} random edges $\vec e_1,\ldots,\vec e_m$,
given the event $\cM$ the $m_{ij}$ edges between $V_i$ and $V_j$ are chosen uniformly and independently.
Therefore, we can think of the vertices in $V_i$ as ``bins'' and of the $m_{ij}$ edges as randomly tossed ``balls''.
In particular, the average number of balls that each bin $v\in V_i$ receives is $m_{ij}/\nu_i$.
Crucially, these balls-and-bins experiments are independent for all $i,j$.

To analyze them, we are going to use 
\Cor~\ref{Cor_BallsAndBins}.
Thus, 
 consider a family $(b_{vj})_{v\in V,j\in\brk k\setminus\cbc{\sigma(v)}}$ of mutually
independent Poisson variables with means $\Erw\brk{b_{vj}}=m_{\sigma(v)j}/\nu_{\sigma(v)}.$
Then for any family $(t_{vj})_{v\in V,j\in\brk k\setminus\cbc{\sigma(v)}}$ of integers we have
	\begin{equation}\label{eqWPoisson}
		\pr\brk{\forall v,j:e(v,V_j)=t_{vj}|\cM}
	\leq\exp(o(n))\cdot
		\pr\brk{\forall v,j:b_{vj}=t_{vj}}.
	\end{equation}
In words, the joint probability that the random variables $(e(v,V_j))_{v\in V,j\in\brk k\setminus\cbc{\sigma(v)}}$ take certain values given that $\cM$ occurs is
dominated by the corresponding event for the random variables $(b_{vj})$.

If $|W_i|>\frac{n\ln\ln k}{k\ln k}$, then there are at least $N=\frac{n\ln\ln k}{k\ln k}$ vertices $v\in V_i$ such that $\min_{j\in\brk k\setminus\cbc i}e(v,V_j)<3\ell$.
Thus, let $\cW_i$ be the number of vertices $v\in V_i$ such that $\min_{j\in\brk k\setminus\cbc i}b_{vj}<3\ell$.
Then~(\ref{eqWPoisson}) yields
	\begin{equation}\label{eqWPoisson3}
	\pr\brk{|W_i|\geq N|\cM}\leq\exp(o(n))\cdot\pr\brk{\cW_i\geq N}.
	\end{equation}
Furthermore, because the random variables $(b_{vj})_{v\in V_i,j\in\brk k\setminus\cbc i}$ are mutually independent, $\cW_i$ is a binomial random variable
with mean
	$\Erw\brk{\cW_i}\leq\nu_i q_i$, where $q_i=\sum_{j\in\brk k\setminus\cbc i}\pr\brk{\Po(m_{ij}/\nu_i)\leq 3\ell}$.
Since $\nu_i=(1+o_k(1))n/k$ and $m_{ij}\geq0.99d n/k^2$, we have $\mu_{ij}/\nu_i\geq0.98d/k\geq1.95\ln k$.
Recalling that $\ell=\exp(-7)\ln k$, we find 
$\pr\brk{\Po(m_{ij}/\nu_i)\leq 3\ell}\leq k^{-1.9}$ and thus $q_i\leq (k-1)k^{-1.9}$.
Hence,
	\begin{equation}\label{eqWPoisson6}
	\Erw\brk{\cW_i}\leq(1+o_k(1))k^{-1.9}n\leq k^{-1.8}n.
	\end{equation}
Therefore, the Chernoff bound gives
	\begin{equation}\label{eqWPoisson4}
	\pr\brk{\cW_i\geq N}\leq\exp\brk{-N\ln\bcfr{k^{1.8}N}{\eul n}}\leq\exp(-20n/k).
	\end{equation}
Combining~(\ref{eqWPoisson3}) and~(\ref{eqWPoisson4}), we obtain
	\begin{equation}\label{eqWPoisson5}
	\pr\brk{|W_i|\geq N|\cM}\leq\exp(o(n)-20n/k)\leq\exp(-19n/k).
	\end{equation}

Now, consider the event that there are at least $\kappa=\lceil\ln^4k\rceil$ classes $i_1,\ldots,i_\kappa$ such that $|W_i|\geq N'=\frac{n}{k\ln^4k}$.
We have
	\begin{equation}\label{eqWPoisson7}
	\pr\brk{\cW_{i_j}\geq N'}\leq\exp\brk{-N'\ln\bcfr{k^{1.8}N'}{\eul n}}\leq\exp\brk{-\frac12N'\ln k},
	\end{equation}
Furthermore, because the random variables $\cW_{i_1},\ldots,\cW_{i_\kappa}$ are independent,
we obtain from~(\ref{eqWPoisson}) and (\ref{eqWPoisson7})
	\begin{eqnarray}\label{eqWPoisson8}
	\pr\brk{\abs{\cbc{i\in\brk k:|W_i|\geq N'}}\geq\kappa|\cM}&\leq&\bink{k}{\kappa}\exp\brk{-\frac{\kappa}2N'\ln k}\leq\exp(-20n/k).
	\end{eqnarray}

With respect to the event $|W|\geq nk^{-0.7}$, observe that by~(\ref{eqWPoisson6}) the sum $\cW=\sum_{i=1}^k\cW_i$
is stochastically dominated by a binomial random variable with mean $nk^{-0.8}$.
Therefore, by~(\ref{eqWPoisson}) and the Chernoff bound
	\begin{equation}\label{eqWPoisson9}
	\pr\brk{\abs W\geq nk^{-0.7}|\cM}\leq\pr\brk{\cW\geq n k^{-0.7}}\leq\exp(-n k^{-0.7})\leq\exp(-20n/k).
	\end{equation}
Finally, since the estimates~(\ref{eqWPoisson5}), (\ref{eqWPoisson8}), (\ref{eqWPoisson9}) hold
for all $\cM$, the assertion follows from Bayes' formula.
\qed

\subsection{Proof of \Lem~\ref{Lem_size_of_U}}\label{Sec_Lem_size_of_U}

We begin by estimating the number of edges between the sets $W_i$ and the color class $V_j$.
As before, we let that $V_i=\sigma^{-1}(i)$ for $i\in\brk k$ and
 $\nu_i=|V_i|=(1+o_k(1))n/k$ for $i=1,\ldots,k$.

\begin{lemma}\label{Lemma_eWiVi}
In $G'(\sigma)$ we have
	$$\pr\brk{\max_{1\leq i<j\leq k}e(W_i,V_j)\geq \frac{2n\ln\ln k}{k}}\leq\exp(-11 n/k).$$
\end{lemma}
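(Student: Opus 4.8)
The plan is to condition on the pairwise edge counts, pass to a Poissonised model, and split $e(W_i,V_j)$ according to which colour ``witnesses'' that a vertex lies in $W_i$. By \Lem~\ref{Lemma_fixEdgeCounts} we may work outside an event of probability at most $\exp(-11n/k)$ and thus assume $\frac{0.99dn}{k^2}\le e(V_i,V_j)\le\frac{1.01dn}{k^2}$ for all $1\le i<j\le k$. So fix admissible integers $(m_{ij})_{i<j}$ in this range and condition on $\cM=\{e(V_i,V_j)=m_{ij}\ \forall i<j\}$. Exactly as in the proof of \Lem~\ref{Lem_size_of_W}, under $\cM$ the $m_{ij}$ edges between $V_i$ and $V_j$ land uniformly and independently, so \Cor~\ref{Cor_BallsAndBins}, applied to the $k-1$ balls-and-bins experiments involving $V_i$, lets us replace the degree counts $\bigl(e(v,V_{j'})\bigr)_{v\in V_i,\,j'\neq i}$ by a family $(b_{vj'})$ of independent Poisson variables with means $\lambda_{j'}:=m_{ij'}/\nu_i$, at the cost of only a factor $\exp(o(n))$ in the probability of any event. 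Since $\nu_i=(1+o_k(1))n/k$ and $d=(2+o_k(1))k\ln k$, we have $1.95\ln k\le\lambda_{j'}\le2.1\ln k$ for all $j'\neq i$ once $k$ is large.

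Write $\hat B:=\sum_{v\in V_i}b_{vj}\cdot\mathbf 1\{\min_{j'\neq i}b_{vj'}<3\ell\}$, the Poissonised version of $e(W_i,V_j)$. From the pointwise inequality $\mathbf 1\{\min_{j'\neq i}b_{vj'}<3\ell\}\le\mathbf 1\{b_{vj}<3\ell\}+Z_v$, where $Z_v:=\mathbf 1\{\exists j'\in\brk k\setminus\{i,j\}:b_{vj'}<3\ell\}$, we obtain $\hat B\le\hat A+\hat B'$ with $\hat A:=3\ell\cdot\bigl|\{v\in V_i:b_{vj}<3\ell\}\bigr|$ and $\hat B':=\sum_{v\in V_i}b_{vj}Z_v$; it suffices to bound $\pr[\hat A\ge\frac{n\ln\ln k}{k}]$ and $\pr[\hat B'\ge\frac{n\ln\ln k}{k}]$ each by $\exp(-\Omega_k(\tfrac{n\ln\ln k}{k}))$. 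For $\hat A$ the cardinality is a binomial random variable with mean $\nu_i\pr[\Po(\lambda_j)<3\ell]\le\nu_i k^{-1.9}$ — precisely the estimate from the proof of \Lem~\ref{Lem_size_of_W}, valid because $\ell=\eul^{-7}\ln k$ and $\lambda_j\ge1.95\ln k$ — so the relevant threshold $\frac{n\ln\ln k}{3\ell k}$ exceeds this mean by a factor $k^{1.9-o_k(1)}$, and the Chernoff bound of \Lem~\ref{Lemma_Chernoff} (the ``$t>1$'' form) gives a bound of order $\exp(-\Omega(\tfrac{n\ln\ln k}{k}\ln k))$, comfortably below $\exp(-11n/k)$ for every large $k$.

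The core of the argument is $\hat B'$, and here the key observation is that $b_{vj}$ is \emph{independent} of $Z_v$ (the latter depends only on the $b_{vj'}$ with $j'\neq j$) and that the pairs $(b_{vj},Z_v)$ are independent across $v\in V_i$. Writing $\lambda:=\lambda_j$ and $p_v:=\pr[Z_v=1]\le(k-2)k^{-1.9}\le k^{-0.9}$, the exponential moment factorises:
\[
\Erw\bigl[\eul^{t\hat B'}\bigr]=\prod_{v\in V_i}\bigl(1+p_v(\eul^{\lambda(\eul^t-1)}-1)\bigr)\le\exp\!\bigl(\nu_i k^{-0.9}(\eul^{\lambda(\eul^t-1)}-1)\bigr),\qquad t>0.
\]
Now choose $t>0$ to be a small absolute constant with $\lambda(\eul^t-1)\le0.85\ln k$ (possible since $\lambda\le2.1\ln k$); then the right-hand side is at most $\exp(\nu_i k^{-0.05})=\exp(o_k(1)\cdot\tfrac{n\ln\ln k}{k})$, so Markov's inequality yields $\pr[\hat B'\ge\tfrac{n\ln\ln k}{k}]\le\exp\bigl(-(t-o_k(1))\tfrac{n\ln\ln k}{k}\bigr)$, which is below $\exp(-11n/k)$ once $\tfrac t2\ln\ln k\ge11$, i.e.\ for $k\ge k_0$. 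To conclude, note that the bounds just derived hold uniformly over all admissible $(m_{ij})$, so a union bound over the $\bink k2$ pairs $i<j$ (a factor $\eul^{O(\ln k)}$, negligible for large $n$) followed by Bayes' formula over the choice of $(m_{ij})$ — exactly as at the end of the proof of \Lem~\ref{Lem_size_of_W} — gives $\pr\bigl[\max_{i<j}e(W_i,V_j)\ge\tfrac{2n\ln\ln k}{k}\bigr]\le\exp(-11n/k)$.

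I expect the main obstacle to be the term $\hat B'$. A vertex of $W_i$ may have many neighbours in $V_j$, so the naive estimate $e(W_i,V_j)\le3\ell|W_i|$ is false; the decomposition is designed so that the only bad case — that $j$ itself is the sparse-neighbour witness — is confined to $\hat A$, where the cap $3\ell$ does apply, leaving $\hat B'$ a clean sum of independent products $b_{vj}Z_v$. The remaining delicate point is that the target $\tfrac{2n\ln\ln k}{k}$ is a constant factor above $\Erw[e(W_i,V_j)]$, so a mere first-moment/Markov bound does not suffice; the exponential-moment parameter $t$ must be tuned in the regime $\lambda=\Theta(\ln k)\to\infty$ to make $\eul^{-t\cdot(n\ln\ln k)/k}$ beat the Poisson blow-up factor $\eul^{\lambda(\eul^t-1)}$, which is exactly why $k_0$ must be taken sufficiently large.
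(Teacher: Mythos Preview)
Your argument is correct and takes a genuinely different route from the paper. The paper's proof does not open up the definition of $W_i$ at all: it invokes \Lem~\ref{Lem_size_of_W} to obtain $|W_i|\le\frac{n\ln\ln k}{k\ln k}$, observes that for any \emph{fixed} set $S\subset V_i$ of at most that size the count $e(S,V_j)$ is binomial in $G'(\sigma)$, applies the Chernoff bound, and then takes a union bound over all such $S$ (at most $2^{\nu_i}=\exp(O(n/k))$ of them). Your approach instead Poissonises, peels off the case where $j$ itself is the witness colour (the $\hat A$ term, where the cap $3\ell$ is legitimate), and handles the remaining sum $\hat B'$ via a moment-generating-function argument exploiting the independence of $b_{vj}$ from $Z_v$. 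The paper's route is conceptually simpler --- it treats $W_i$ as a black-box small set --- but it is numerically delicate: for $|S|=\frac{n\ln\ln k}{k\ln k}$ one has $\Erw[e(S,V_j)]=(2+o_k(1))\frac{n\ln\ln k}{k}$, essentially equal to the target threshold, so the Chernoff step yields very little (the paper's displayed bound $\Erw[e(S,V_j)]\le\frac{3n\ln\ln k}{2k}$ in fact loses a factor of~$2$, since $d/(k\ln k)\approx 2$). Your decomposition sidesteps this entirely because $\Erw[\hat B']\le\nu_i\lambda\cdot k^{-0.9}=O_k\!\bigl(\frac{n\ln k}{k^{1.9}}\bigr)$, far below the threshold, leaving plenty of room for the exponential-moment bound.

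One bookkeeping remark: you discard the bad event from \Lem~\ref{Lemma_fixEdgeCounts} as having probability ``at most $\exp(-11n/k)$'', but the two tails of that lemma together already cost $2\exp(-11n/k)$, exceeding the stated target. This is harmless --- the proof of \Lem~\ref{Lemma_fixEdgeCounts} actually gives $\exp(-12n/k)$ per colour pair before the union bound, and in any case the constant $11$ is immaterial for the application in \Lem~\ref{Lem_size_of_U} --- but strictly you should either cite that sharper internal bound or relax your target constant slightly.
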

\begin{proof}
Fix $1\leq i<j\leq k$.
We begin by proving the following statement.
\begin{equation}\label{eqeWiVj1}
\parbox[c]{13.5cm}{For any set $S\subset V_i$ of size $|S|\leq\frac{n\ln\ln k}{k\ln k}$ we have $\pr\brk{e(S,V_j)> 2n\ln\ln k/k}\leq\exp(-13n/k)$.}
\end{equation}
Indeed, for any set $S$ as above the number $e(S,V_j)$ of edges $\vec e_i$ that join $S$ to $V_j$ has a binomial distribution
	$\Bin(m,q_{j,S})$, where
	$$q_{j,S}=\frac{2\nu_j|S|}{n^2-\sum_{l=1}^k\nu_l^2}\leq\frac{3\ln\ln k}{k^2\ln k};$$
the last inequality follows from our assumption that $\nu_l=(1+o_k(1))n/k$ for all $l\in\brk k$.
Hence,
	$$\Erw\brk{e(S,V_j)}=m q_{j,S}\leq\frac{3d\ln\ln k}{2k^2\ln k}\cdot n\leq\frac{3\ln\ln k}{2k}\cdot n$$
Thus, (\ref{eqeWiVj1}) follows from the Chernoff bound.
Taking the union bound over all possible sets $S$ of size $|S|\leq\frac{n\ln\ln k}{k\ln k}$, we obtain from~(\ref{eqeWiVj1})
\begin{equation}\label{eqeWiVj2}
\pr\brk{\exists S\subset V_i,\,|S|\leq\frac{n\ln\ln k}{k\ln k}:e(S,V_j)>\frac{2n\ln\ln k}k}\leq2^{\nu_i}\exp(-13n/k)\leq\exp(-12n/k).
\end{equation}
As $\pr\brk{|W_i|>\frac{n\ln\ln k}{k\ln k}}\leq\exp(-16n/k)$ by \Lem~\ref{Lem_size_of_W},
the assertion follows from~(\ref{eqeWiVj2}).
\qed\end{proof}

\begin{lemma}\label{Lemma_degreesFreak}
Let $T_{i}$ be the number of vertices $v\in V_i$ such that $\max_{j\neq i}e(v,V_j)>100\ln k$ and let $T=\sum_{i\in\brk k}T_i$.
Then in $G'(\sigma)$ we have
	$$\pr\brk{T>\frac{n}{4k\ln k}}\leq\exp(-10 n/k).$$
\end{lemma}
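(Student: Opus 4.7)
The plan is to follow the template of \Lem~\ref{Lem_size_of_W} essentially verbatim. First I would invoke \Lem~\ref{Lemma_fixEdgeCounts} to reduce to the event $\cM$ specifying edge counts $e(V_i,V_j)=m_{ij}$ with $m_{ij}\leq 1.01 dn/k^2$ for all $i<j$; the complementary event contributes only $\exp(-11n/k)$. Conditional on $\cM$, the $m_{ij}$ edges between $V_i$ and $V_j$ are thrown uniformly, and by \Cor~\ref{Cor_BallsAndBins} the joint distribution of the degrees $(e(v,V_j))_{v,\,j\neq\sigma(v)}$ is dominated up to a factor $\exp(o(n))$ by that of independent Poisson variables $b_{vj}$ with means $\mu_{vj}=m_{\sigma(v)j}/\nu_{\sigma(v)}\leq 2.1\ln k$ for sufficiently large $k$.

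The pointwise estimate is the heart of the calculation. Since $100\ln k$ is much larger than the mean $\mu_{vj}$, the Poisson analogue of the Chernoff bound in \Lem~\ref{Lemma_Chernoff} gives
\[
\pr\brk{b_{vj}>100\ln k}\leq\exp\brk{-100\ln k\cdot\ln\bcfr{100}{2.1\eul}}\leq k^{-100}.
\]
A union bound over the $k-1$ choices of $j\neq\sigma(v)$ shows that any individual vertex is ``freak'' (in the sense of the lemma) with probability at most $k^{-99}$. Let $\cT$ denote the corresponding count of freak vertices when each $e(v,V_j)$ is replaced by $b_{vj}$. Because the $b_{vj}$ are independent across distinct $v\in V$, $\cT$ is stochastically dominated by a binomial random variable with mean at most $nk^{-99}$.

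A second application of \Lem~\ref{Lemma_Chernoff} then yields
\[
\pr\brk{\cT>\frac{n}{4k\ln k}}\leq\exp\brk{-\frac{n}{4k\ln k}\cdot\ln\bcfr{k^{98}}{4\eul\ln k}}\leq\exp(-20n/k).
\]
Transferring back to the true count $T$ costs only an $\exp(o(n))$ factor via the dominance relation~(\ref{eqWPoisson}), and averaging over the admissible edge counts $(m_{ij})$ by Bayes' formula, as at the end of the proof of \Lem~\ref{Lem_size_of_W}, absorbs the $\exp(-11n/k)$ conditioning loss with room to spare. No real obstacle arises: the enormous gap between the Poisson mean $\approx 2\ln k$ and the threshold $100\ln k$ provides ample slack, and the argument is essentially a bookkeeping exercise that tracks the constants $2.1$, $100$, $1/4$, and $10$ through the two Chernoff bounds.
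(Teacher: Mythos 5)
Your proposal is correct and follows essentially the same structure as the paper's proof: condition on fixed edge counts via \Lem~\ref{Lemma_fixEdgeCounts}, Poissonize the per-vertex degrees $e(v,V_j)$ via \Cor~\ref{Cor_BallsAndBins}, bound the per-vertex freak probability, dominate $\cT$ by a binomial, apply Chernoff, and average over the admissible $\vec m$ by Bayes' formula.

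The only place you diverge is in the pointwise tail estimate: you invoke a ``Poisson analogue'' of \Lem~\ref{Lemma_Chernoff} (which as stated is for binomial random variables), whereas the paper sidesteps this by bounding $\pr\brk{b_{vj}>100\ln k}$ directly via Stirling's formula, $\sum_{s>100\ln k}\Erw\brk{b_{vj}}^s/s!\leq k^{-90}$. Your route works --- the function $\varphi$ in Bennett's inequality applies verbatim to Poisson variables --- but if you want to stay strictly within the tools the paper declares, the direct Stirling estimate is the cleaner choice and requires no appeal to an unstated extension. Both give enormous slack (you get $k^{-286}$ versus the claimed $k^{-100}$; the paper's cruder bound gives $k^{-90}$, still far more than needed), so the constants $2.1$, $3$, $100$, $1/4$ can be relaxed freely. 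Your use of the $1.01dn/k^2$ bound from \Lem~\ref{Lemma_fixEdgeCounts} versus the paper's looser $2dn/k^2$ is another inconsequential cosmetic difference.
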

\begin{proof}
For an integer vector $\vec m=(m_{ij})_{1\leq i<j\leq k}$ let $\cE_{\vec m}$ be the event
that $e(V_i,V_j)=m_{ij}$ for all $1\leq i<j\leq k$.
Set $m_{ji}=m_{ij}$ for $1\leq i<j\leq k$.
By \Lem~\ref{Lemma_fixEdgeCounts} we may confine ourselves to the case that $e(V_i,V_j)\leq\frac{2dn}{k^2}$ for all $i\neq j$.
Thus, fix any $\vec m$ such that $m_{ij}\leq\frac{2dn}{k^2}$ for all $i<j$.
Given $\cE_{\vec m}$, for each of the $m_{ij}$ edges between color classes $V_i$, $V_j$ the actual vertex
in $V_i$ that the edge is incident with is uniformly distributed.
Thus, we can think of the vertices $v\in V_i$ as bins and of edge $m_{ij}$ edges as balls of color $j$, and our goal
is to figure out the probability that bin $v$ contains more than $100\ln k$ balls  colored $j$ for some $j\neq i$.
Because we are conditioning on $\cE_{\vec m}$, these balls-and-bins experiments are independent for all color pairs $i\neq j$.

To study these balls-and-bins experiments we  use \Cor~\ref{Cor_BallsAndBins}.
Thus, let $(b_{vj})_{v\in V,j\in\brk k\setminus\cbc{\sigma(v)}}$ be a family of mutually independent Poisson variables
such that $\Erw[b_{vj}]=m_{ij}/\nu_i$ for all $v\in V_i$, $j\in\brk k\setminus\cbc i$.
In addition, let $\cT_i$ be the number of vertices $v\in V_i$ such that $\max_{j\neq i}b_{vj}>100\ln k$ and let $\cT=\sum_{i=1}^k\cT_i$.
Then \Cor~\ref{Cor_BallsAndBins} gives
	\begin{equation}\label{eqdegreesFreak1}
	\pr\brk{T>\frac{n}{4k\ln k}\,\big|\,\cE_{\vec m}}
		\leq\exp(o(n))\cdot	\pr\brk{\cT>\frac{n}{4k\ln k}}
	\end{equation}

To complete the proof we need to bound $\pr\brk{\cT>\frac{n}{4k\ln k}}$.
For each vertex $v\in V_i$ and each $j\neq i$ we have
	$\Erw\brk{b_{vj}}= m_{ij}/\nu_i\leq\frac{2dn}{k^2\nu_i}\leq3\ln k$.
Hence, by Stirling's formula
	$$\pr\brk{b_{vj}>100\ln k}\leq\sum_{s>100\ln k}\Erw\brk{b_{vj}}^{s}/s!\leq k^{-90}.$$
Because the random variables $b_{vj}$ are mutually independent, $\cT$ is a sum of independent Bernoulli random variables.
Applying the union bound, we thus have
	\begin{equation}\label{eqdegreesFreak3}
	\pr\brk{\max_{j\neq\sigma(v)}b_{vj}>100\ln k}\leq k^{-89}\qquad\mbox{ for any $v\in V$}.
	\end{equation}
Therefore, (\ref{eqdegreesFreak3}) shows that $\cT$ is stochastically dominated by a binomial random variable $\Bin(n,k^{-89})$.
Consequently, the Chernoff bound yields
	\begin{equation}\label{eqdegreesFreak4}
	\pr\brk{\cT>\frac{n}{4k\ln k}}\leq\pr\brk{\Bin(n,k^{-89})>\frac{n}{4k\ln k}}\leq\exp(-20n/k).
	\end{equation}
Finally, combining~(\ref{eqdegreesFreak1}) and 
	(\ref{eqdegreesFreak4}) yields the assertion.		
\qed\end{proof}

\medskip\noindent{\em Proof of \Lem~\ref{Lem_size_of_U}.}
Let $\vec d=(d_{vj})_{v\in V,j\in\brk k\setminus\cbc{\sigma(v)}}$ be an integer vector.
Moreover, let $\cE_{\vec d}$ be the event that $e(v,V_j)=d_{vj}$ for all $v\in V$, $j\neq\sigma(v)$.
We are going to estimate the size of $U$ given that $\cE_{\vec d}$ occurs for a vector $\vec d$
that is ``compatible'' with the properties established in \Lem s~\ref{Lemma_fixEdgeCounts}--\ref{Lemma_degreesFreak}.
More precisely, we call $\vec d$ \emph{feasible} if the following conditions are satisfied.
\begin{enumerate}
\item[i.] For all $i\neq j$ we have $m_{ij}=\sum_{v\in V_i}d_{vj}\geq\frac{dn}{2k^2}$.
	Moreover, $m_{ij}=m_{ji}$.
\item[ii.] 
	 For all $i\neq j$ we have $w_{ij}=\sum_{v\in V_i:d_{vj}\leq3\ell}\leq\frac{2n\ln\ln k}{k}$.
\item[iii.]
	Let $\cT$ be the set of all vertices $v$ such that $\max_{j\neq\sigma(v)}d_{vj}>100\ln k$.
	Then $\abs\cT\leq\frac n{4k\ln k}$.
\end{enumerate}
By \Lem s~\ref{Lemma_fixEdgeCounts}--\ref{Lemma_degreesFreak}, we just need to show that
for any feasible $\vec d$ we have
	\begin{equation}\label{eqLem_size_of_U1}
	\pr\brk{\abs U>\frac{n\ln\ln k}{k\ln k}\,\big|\,\cE_{\vec d}}\leq\exp(-10n/k).
	\end{equation}

Given the event $\cE_{\vec d}$, the total number $m_{ij}$ of $V_i$-$V_j$-edges is fixed.
So is the number $w_{ji}$ of $W_j$-$V_i$ edges.
What remains random is how these edges are matched to the vertices in $V_i$.
More specifically, think of the $W_j$-$V_i$-edges as black balls, of the $V_j\setminus W_j$-$V_i$-edges are white balls,
and of the vertices $v\in V_i$ as bins.
Each bin $v$ has a capacity $d_{vj}$.
Now, the balls are tossed randomly into the bins, and our objective is to figure out the number of bins that receive more than $\ell$ black balls.
Observe that these numbers are independent for all pairs $i\neq j$ of colors.

To estimate this probability, consider a family $(b_{vj})_{v\in V,j\in\brk k\setminus\cbc{\sigma(v)}}$ of independent binomial
random variables such that $b_{vj}$ has distribution $\Bin(d_{vj},w_{ji}/m_{ji})$.
Let $\cB$ be the event that $\sum_{v\in V_i}b_{vj}=w_{ji}$ for all $i\neq j$.
Furthermore, let $\cU$ be the number of vertices $v$ such that $\max_{j\neq\sigma(v)}b_{vj}>\ell$.
Then
	\begin{equation}\label{eqLem_size_of_U2}
	\pr\brk{\abs U>\frac{n\ln\ln k}{k\ln k}\,\big|\,\cE_{\vec d}}=	\pr\brk{\cU>\frac{n\ln\ln k}{k\ln k}\,\big|\,\cB}\leq\frac{\pr\brk{\cU>\frac{n\ln\ln k}{k\ln k}}}{\pr\brk\cB}.
	\end{equation}

The sums $\sum_{v\in V_i}b_{vj}$ are binomial random variables $\Bin(m_{ij},w_{ji}/m_{ji})$.
Moreover, they are independent for all $i\neq j$.
Therefore, Stirling's formula yields
	\begin{equation}\label{eqLem_size_of_U3}
	\pr\brk\cB=\prod_{i\neq j}\pr\brk{\Bin(m_{ij},w_{ij}/m_{ij})=m_{ij}}=\Theta(n^{-k(k-1)/2})=\exp(o(n)).
	\end{equation}
	
Let $v\in V_i$ be a vertex such that for color $j\neq i$ we have $d_{vj}\leq100\ln k$.
Then our assumptions i.\ and ii.\ on $\vec d$ ensure that
	$\Erw\brk{b_{vj}}=\frac{w_{ji}d_{vj}}{m_{ji}}\leq300\ln\ln k.$
Therefore, by the Chernoff bound
	$$\pr\brk{b_{vj}\geq\ell}\leq\exp\brk{-\ell\cdot\ln\bcfr{\ell}{\eul\cdot \Erw\brk{b_{vj}}}}\leq k^{-100}.$$
Hence, taking the union bound, we find
	\begin{equation}\label{eqLem_size_of_U3}
	\pr\brk{\max_{j\neq\sigma(v)} b_{vj}\geq\ell}\leq k^{-99}\qquad\mbox{if }\max_{j\neq\sigma(v)}d_{vj}\leq100\ln k.
	\end{equation}
Let $\cU'$ be the number of vertices $v$ such that $\max_{j\neq\sigma(v)} b_{vj}\geq\ell$ while $\max_{j\neq\sigma(v)}d_{vj}\leq100\ln k$.
Because the random variables $b_{vj}$ are independent, (\ref{eqLem_size_of_U3}) implies that $\cU'$ is stochastically dominated by
a binomial random variable $\Bin(n,k^{-99})$.
Therefore, the Chernoff bound gives
	\begin{equation}\label{eqLem_size_of_U4}
	\pr\brk{\cU'\geq\frac{n\ln\ln k}{2k\ln k}}\leq\pr\brk{\Bin(n,k^{-99})\geq\frac{n\ln\ln k}{2k\ln k}}\leq\exp(-11 n/k).
	\end{equation}
As $\cU\leq\cT+\cU'\leq\cU'+\frac n{4k\ln k}$ by our assumption iii.\ on $\vec d$, (\ref{eqLem_size_of_U4})
implies that $\pr\brk{\cU\geq\frac{n\ln\ln k}{k\ln k}}\leq\exp(-11 n/k)$.
Thus, the assertion follows from~(\ref{eqLem_size_of_U2}) and~(\ref{eqLem_size_of_U3}).
\qed

\section{Proof of \Prop~\ref{Prop_noCover}}\label{Sec_Prop_noCover}

Throughout this section, we 
let $\zeta:V\ra\cbc{0,1,\ldots,k}$,
$V_i=\zeta^{-1}(i)$ and $\nu_i=|V_i|$ for $i=0,1,\ldots,k$.
In addition, we let $\alpha_i=\nu_i/n$.
We always assume that the conditions of \Prop~\ref{Prop_noCover} hold, namely
\begin{description}
\item[Z1.] $|\zeta^{-1}(0)|\leq nk^{-2/3}$.
\item[Z2.]  $|\zeta^{-1}(i)|=(1+o_k(1))n/k$ for all $i\in\brk k$.
\item[Z3.]  There are no more than $\ln^9k$ indices $i\in\brk k$ such that $|\zeta^{-1}(i)-n/k|>n/(k\ln^3k)$.
\end{description}
In addition, we assume that $d=2k\ln k-\ln k-c$ for some $0\leq c\leq 4$.

\subsection{Counting covers}

To prove \Prop~\ref{Prop_noCover} we perform a first moment argument over the number of covers $\zeta$.
Let $\cI_\zeta$ be the event that $V_1,\ldots,V_k$ are independent sets in $G'(n,m)$.
Moreover, let $\cC_\zeta$ be the event that $\zeta$ is a $k$-cover in $G'(n,m)$.
Clearly, $\cC_\zeta\subset\cI_\zeta$, and we begin
begin by estimating the probability the latter event.
Let $F_\zeta=\sum_{j=1}^k\alpha_j^2.$

\begin{lemma}\label{Lemma_respectsClasses}
We have
	$\frac1n\ln\pr\brk{\cI_\zeta}= \frac d2\ln(1-F_\zeta)$.
\end{lemma}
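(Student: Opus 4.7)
The proof should be essentially a one-line computation, exploiting the fact that in $G'(n,m)$ the edges are perfectly independent. The plan is as follows.

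First, I would fix an arbitrary edge $\vec e_i=(\vec u_i,\vec v_i)$ in $G'(n,m)$ and compute the probability that it violates $\cI_\zeta$, i.e., that both endpoints lie in a common color class $V_j$ with $j\in\brk k$. Since $\vec e_i$ is uniform over $V\times V$ (all $n^2$ ordered pairs), this probability is exactly
\begin{equation*}
\pr\brk{\exists j\in\brk k:\vec u_i,\vec v_i\in V_j}=\sum_{j=1}^k\bcfr{\nu_j}{n}^2=F_\zeta.
\end{equation*}
Note that we do not require the joker class $V_0$ to be independent, which is why the sum runs only from $1$ to $k$.

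Next, I would invoke the key feature of the $\Gnm$ model: the edges $\vec e_1,\ldots,\vec e_m$ are mutually independent. Therefore the events ``$\vec e_i$ does not join two vertices of the same class $V_j$, $j\in\brk k$'' are independent across $i\in\brk m$, giving
\begin{equation*}
\pr\brk{\cI_\zeta}=(1-F_\zeta)^m.
\end{equation*}

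Finally, taking logarithms and dividing by $n$, and using that $m=\lceil dn/2\rceil$ so that $m/n=d/2+O(1/n)$, I obtain
\begin{equation*}
\frac1n\ln\pr\brk{\cI_\zeta}=\frac mn\ln(1-F_\zeta)=\frac d2\ln(1-F_\zeta)+o(1),
\end{equation*}
where the $o(1)$ term is harmless because under \textbf{Z1}--\textbf{Z2} one has $F_\zeta=(1+o_k(1))/k$, hence $\ln(1-F_\zeta)$ is bounded. There is no real obstacle here; the lemma is essentially a restatement of independence of the edge trials in $\Gnm$, and the only thing to be mildly careful about is absorbing the $\lceil\cdot\rceil$ rounding in $m$ into the $o(1)$ (or equivalently, reading the stated equality up to a $1+o(1)$ factor, consistently with the ``$o(n)$'' conventions used in \Lem~\ref{Lemma_firstM}).
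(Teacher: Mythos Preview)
Your proof is correct and matches the paper's own argument essentially line for line: compute the probability that a single uniform edge avoids all monochromatic classes $V_1,\ldots,V_k$ as $1-F_\zeta$, then use independence of the $m$ edges in $\Gnm$. If anything, you are more careful than the paper in explicitly handling the rounding in $m=\lceil dn/2\rceil$; the paper simply asserts the identity without comment on the $o(1)$.
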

\begin{proof}
For each of the edges $\vec e_i$ the probability of joining two vertices in $V_j$ is $(\nu_j/n)^2=\alpha_j^2$.
Hence, the probability that $\vec e_i$ does not fall inside any of the classes $V_1,\ldots,V_k$ is equal to $1-F_\zeta$.
Thus, the assertion follows from the independence of  $\vec e_1,\ldots,\vec e_m$.
\qed\end{proof}

\noindent
In \Sec~\ref{Sec_isCover} we are going to establish the following estimate of the probability of $\cC_\sigma$.

\begin{lemma}\label{Lemma_isCover}
We have
	$\frac1n\ln\pr\brk{\cC_\zeta|\cI_\zeta}\leq
		\sum_{i=0}^k\alpha_i\ln p_i+o(1)$,
where
	\begin{eqnarray*}
	p_0&=&\sum_{i,j\in\brk k:i\neq j}\bc{\frac12+\frac{\alpha_j d}{1-F_\zeta}}\exp\brk{-\frac{(\alpha_i+\alpha_j)d}{1-F_\zeta}},\\
	p_i&=&\prod_{j\in\brk k\setminus\cbc i}1-\bc{1+\frac{\alpha_j d}{1-F_\zeta}}\exp\brk{-\frac{\alpha_j d}{1-F_\zeta}}\quad\mbox{ for }i=1,\ldots,k.
	\end{eqnarray*}
\end{lemma}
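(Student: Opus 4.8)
The plan is to condition on the event $\cI_\zeta$ (all classes $V_1,\dots,V_k$ independent), and then bound the conditional probability that $\zeta$ satisfies \textbf{CV2} and \textbf{CV3}. Given $\cI_\zeta$, the multi-graph $G'(n,m)$ consists of $m$ independent edges, each uniform over the pairs $(u,v)$ with $\zeta(u)\neq\zeta(v)$ or with $\zeta(u)=0$ or $\zeta(v)=0$; more precisely, an edge has no constraint if it touches $V_0$, and otherwise it must join two different non-zero classes. The first step is therefore to record the conditional distribution of the ``colored degrees'' $(e(v,V_j))_{v\in V,\,j\in\brk k}$ given $\cI_\zeta$: for a vertex $v\in V_i$ with $i\neq0$, the number of neighbors in a class $V_j$ with $j\neq i$, $j\neq0$ is, up to the balls-and-bins conditioning, Poisson with mean $\alpha_j d/(1-F_\zeta)$ (since each of the $m\approx dn/2$ edges lands in the ordered pair $(V_i,V_j)\cup(V_j,V_i)$ with probability $2\alpha_i\alpha_j/(1-F_\zeta)$, and there are $\nu_i$ bins). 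For $v\in V_0$ the same computation applies for every $j\in\brk k$. I would invoke Corollary~\ref{Cor_BallsAndBins} (Poissonization) so that the joint degree distribution is dominated, up to an $\exp(o(n))$ factor, by a product of independent Poisson variables $b_{vj}$ with those means, exactly as in the proofs of Lemmas~\ref{Lem_size_of_W} and~\ref{Lem_size_of_U}.

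Next I would compute, for a \emph{single} vertex $v$, the probability of the local event that \textbf{CV2}/\textbf{CV3} demands. For $v\in V_i$ with $i\in\brk k$: \textbf{CV2} says $v$ is stable, i.e.\ for every $j\in\brk k\setminus\{i\}$ it has at least two neighbors in $V_j$; with independent Poisson $b_{vj}$ of mean $\mu_j:=\alpha_j d/(1-F_\zeta)$ this has probability $\prod_{j\neq i}\pr[\Po(\mu_j)\geq 2]=\prod_{j\neq i}\bigl(1-(1+\mu_j)\eul^{-\mu_j}\bigr)=p_i$. For $v\in V_0$: \textbf{CV3} requires the existence of an ordered pair $(i,j)$ of distinct colors such that $v$ has \emph{no} neighbor in $V_i$ and \emph{at most one} neighbor in $V_j$; a union bound over the $k(k-1)$ ordered pairs gives at most $\sum_{i\neq j}\pr[\Po(\mu_i)=0]\cdot\pr[\Po(\mu_j)\leq 1]=\sum_{i\neq j}\eul^{-\mu_i}(1+\mu_j)\eul^{-\mu_j}=p_0$, matching the stated formula (the $\tfrac12$ versus $1$ discrepancy in the displayed $p_0$ I would reconcile by noting that one may also ask $v$ to have at most one neighbor in $V_j$ counting the ``missing'' class with weight — in any case the union bound yields an expression of exactly this shape, and since we only need an upper bound the constant in front of each summand only has to be an upper bound on $\pr[\Po(\mu_j)\le 1]$, here written as $(\tfrac12+\mu_j)\eul^{-\mu_j}\ge(1+\mu_j)\eul^{-\mu_j}$ is false, so more care is needed; the honest statement is the union bound with $(1+\mu_j)\eul^{-\mu_j}$, and I would simply carry that through).

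The third step is to multiply over vertices. Because, \emph{under the Poisson model}, the families $(b_{vj})_v$ are mutually independent across all $v$, the probability that \emph{every} vertex satisfies its local requirement is at most $\prod_{i=0}^k p_i^{\nu_i}$, hence
\[
\frac1n\ln\pr[\cC_\zeta\mid\cI_\zeta]\le \frac1n\ln\Bigl(\exp(o(n))\prod_{i=0}^k p_i^{\nu_i}\Bigr)=\sum_{i=0}^k\alpha_i\ln p_i+o(1),
\]
using $\nu_i=\alpha_i n$ and the $\exp(o(n))$ cost of de-Poissonization from Corollary~\ref{Cor_BallsAndBins} (here $\mu=m=\Theta(n)$, so the factor is $O(\sqrt n)^{k^2}=\exp(o(n))$). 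The one genuine subtlety — and the step I expect to be the main obstacle — is that \textbf{CV2}/\textbf{CV3} are \emph{not} literally independent across vertices: a single edge $\{u,v\}$ contributes simultaneously to the degree of $u$ \emph{and} of $v$, so the events ``$v$ stable'' and ``$u$ stable'' share randomness. The Poissonization lemma handles the $\cI_\zeta$-conditioning (the balls-and-bins structure per ordered color pair), but the cross-vertex dependence inside one color pair must be argued away separately: here one uses that the event in question is a function of the \emph{joint} degree sequence $(e(v,V_j))$, that this sequence is exactly a collection of independent balls-into-bins experiments (one per ordered pair $(V_i,V_j)$), and that within each such experiment the counts $(e(v,V_j))_{v\in V_i}$ are, after Poissonization, genuinely independent — the ``two endpoints'' issue is a red herring because we are counting, for each \emph{ordered} pair, how many times class $V_i$'s bins are hit, and a single undirected edge between $V_i$ and $V_j$ is one ball for the $(i,j)$ experiment and one ball for the $(j,i)$ experiment, but these two experiments are coupled. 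I would resolve this exactly as the paper does in \Lem~\ref{Lem_size_of_W}: condition further on all the pair-totals $m_{ij}=e(V_i,V_j)$, note that given these totals the two directed experiments are each uniform and the relevant events for bins in $V_i$ depend only on the $(i,\cdot)$ experiments, which \emph{are} mutually independent across $i$; then de-condition via Bayes, the totals contributing only $\exp(o(n))$. That reduction, combined with the per-vertex Poisson computations above, yields the claimed bound.
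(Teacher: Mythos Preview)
Your outline is the same as the paper's: Poissonize the colored degrees, compute the per-vertex probabilities $p_i$, and multiply using independence in the Poisson model. Two points deserve correction.

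First, the $\tfrac12$ in $p_0$ that puzzled you is not a mistake. The event in \textbf{CV3} is ``$\exists$ ordered $(i,j)$ with $b_{vi}=0$ and $b_{vj}\le 1$''. Split $\{b_{vj}\le1\}$ into $\{b_{vj}=0\}$ and $\{b_{vj}=1\}$. The first piece, $\{b_{vi}=b_{vj}=0\}$, is symmetric in $i,j$, so the union bound need only run over \emph{unordered} pairs; the second piece is not symmetric and stays over ordered pairs. Rewriting the unordered sum as half an ordered sum gives exactly $\sum_{i\neq j}(\tfrac12+\mu_j)\exp(-\mu_i-\mu_j)$. Your cruder bound with $1+\mu_j$ would also work downstream, but it is not the stated $p_0$.

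Second, and more substantively, your de-Poissonization step has a gap. \Cor~\ref{Cor_BallsAndBins} requires the Poisson mean to equal the balls-to-bins ratio, so conditioning on $\vec m=(m_{ij})$ and applying it yields Poissons with means $m_{ij}/\nu_i$, not the fixed $\alpha_j d/(1-F_\zeta)$ you want; ``de-conditioning via Bayes'' does not by itself turn the former into the latter. The paper's fix is to set up the independent Poisson family $(b_{vj})$ with the \emph{target} means $\alpha_j d/(1-F_\zeta)$ from the outset, and to introduce the consistency event $\cV=\{B_{ij}=B_{ji}\ \forall i\neq j,\ \tfrac12 B_{00}+\sum_{i<j}B_{ij}=m\}$ in this Poisson model. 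One then checks (i) $\pr[\cV]=\exp(o(n))$ (Stirling, since each $B_{ij}$ is Poisson with mean $\Theta(n)$), and (ii) for every $\vec m$, $\pr[\cM_{\vec m}\mid\cI_\zeta]/\pr[\cM'_{\vec m}\mid\cV]=\exp(o(n))$, where $\cM'_{\vec m}$ is the event $\{B_{ij}=m_{ij}\}$; the ratio collapses to $m!\eul^m/m^m$ and is independent of $\vec m$. Together with the balls-and-bins identity $\pr[\cC_\zeta\mid\cM_{\vec m}]=\pr[\cB_\zeta\mid\cM'_{\vec m}]$ (\Lem~\ref{Lemma_BallsAndBins}, exact), summing over $\vec m$ gives $\pr[\cC_\zeta\mid\cI_\zeta]\le\exp(o(n))\pr[\cB_\zeta]$. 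Your worry about the $(i,j)$ versus $(j,i)$ coupling is handled precisely by the constraint $B_{ij}=B_{ji}$ inside $\cV$; in $G'(n,m)$ the two endpoints of each edge are independent, so this is the only coupling, and it is absorbed into $\pr[\cV]=\exp(o(n))$.
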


\medskip\noindent{\em Proof of \Prop~\ref{Prop_noCover}.}
Let $A$ be the set of all vectors
	$\vec\alpha=(\alpha_0,\ldots,\alpha_k)\in\brk{0,1}^{k+1}$ that satisfy the following three conditions (cf.\ {\bf Z1--Z3}):
\begin{enumerate}
\item[i.] $\sum_{i=0}^k\alpha_i=1$,
\item[ii.] We have $\alpha_0\leq k^{-2/3}$  and $\alpha_i=(1+o_k(1))/k$ for $i=1,\ldots,k$. Indeed,
		there are no more than $K=\ln^9k$ indices $i\in\brk k$ such that $|\alpha_i-1/k|>k^{-1}\ln^{-3}k$.
\item[iii.] $\alpha_in$ is an integer for $i=0,1,\ldots,k$.
\end{enumerate}
For $\vec\alpha\in A$ let $\cS_{\vec\alpha}$ be the set of all maps $\zeta:V\ra\cbc{0,1,\ldots,k}$
such that $|\zeta^{-1}(i)|=\alpha_i n$ for all $i$.
Then
	\begin{eqnarray}\nonumber
	\cS_{\vec\alpha}&=&\bink{n}{\alpha_0n,\ldots,\alpha_kn}=\bink{n}{\alpha_0 n}\cdot\bink{(1-\alpha_0)n}{\alpha_1 n,\ldots,\alpha_k n}
		\leq\bink{n}{\alpha_0 n}\cdot k^{(1-\alpha_0)n}.
	\end{eqnarray}
Hence, Stirling's formula yields
	\begin{eqnarray}\label{eqCoverCount1}
	\frac1n\ln\cS_{\vec\alpha}&\leq&-\alpha_0\ln\alpha_0-(1-\alpha_0)\ln(\bc{1-\alpha_0}/{k}).
	\end{eqnarray}

\Lem s~\ref{Lemma_respectsClasses} and~\ref{Lemma_isCover} show that for any $\zeta\in\cS_{\vec\alpha}$,
	\begin{eqnarray*}
	\frac1n\ln\pr\brk{\cC_\zeta}&\leq&o(1)+\frac d2\ln(1-F_\zeta)+\sum_{i=0}^k\alpha_i\ln p_i.
	\end{eqnarray*}
Given the value of $\alpha_0$, the sum $F_\zeta=\sum_{i=1}^k\alpha_i^2$ is minimized if $\alpha_i=(1-\alpha_0)/k$ for all $i\in\brk k$.
Thus,
	\begin{eqnarray}\label{eqCoverCount2}
	\frac1n\ln\pr\brk{\cC_\zeta}&\leq&o(1)+\frac d2\ln(1-(1-\alpha_0)^2/k)+\sum_{i=0}^k\alpha_i\ln p_i.
	\end{eqnarray}
Using the approximation $\ln(1-z)=-z-z^2/2+O(z^3)$ and recalling that $d=2k\ln k-\ln k-c$, we see that
	\begin{eqnarray}\nonumber
	\frac d2\ln(1-(1-\alpha_0)^2/k)&=&
		-(1-\alpha_0)^2\ln k\\
		&&\quad+\frac{(1-\alpha_0)^2\ln k}{2k}+\frac{c(1-\alpha_0)^2}{2k}-\frac{(1-\alpha_0)^4\ln k}{2k}+O_k(k^{-1.9})\nonumber\\
			&=&-(1-2\alpha_0)\ln k+\frac{c}{2k}+o_k(1/k)\qquad\mbox{[as $\alpha_0\leq k^{-2/3}$ by ii.]}.
			\label{eqCoverCount3}
	\end{eqnarray}

Furthermore, because $F_\zeta\in(0,1)$ and as $\ln(1-z)\leq -z$ for all $z\in(0,1)$, we get
	\begin{eqnarray}\nonumber
	\sum_{i=1}^k\alpha_i\ln p_i&\leq&-\sum_{i,j\in\brk k:i\neq j}\alpha_i(1+\alpha_j d)\exp(-\alpha_j d)\\
		&=&-\sum_{j\in\brk k}(1-\alpha_0-\alpha_j)(1+\alpha_j d)\exp(-\alpha_j d).\label{eqCoverCount10}
	\end{eqnarray}
Since $\alpha_j=(1+o_k(1))/k$ for all $j\in\brk k$ by ii.\ and as $d=2k\ln k-O_k(\ln k)$, (\ref{eqCoverCount10}) yields
	\begin{eqnarray}\label{eqCoverCount11}
	\sum_{i=1}^k\alpha_i\ln p_i&\leq&O_k(k^{-1.9})-(1-\alpha_0)\sum_{j\in\brk k}(1+\alpha_j d)\exp(-2\alpha_j k\ln k).
	\end{eqnarray}
Moreover, applying condition ii., we obtain from~(\ref{eqCoverCount11})
	\begin{eqnarray}\nonumber
	\sum_{i=1}^k\alpha_i\ln p_i&\leq&O_k(k^{-1.9})+O(K\ln k)\cdot \exp(-2(1+o_k(1))\ln k)\\[-4mm]
			&&\quad-(1-\alpha_0)(k-K)(1+2\ln k+O_k(1/\ln^2 k))\exp(-2(1+O_k(\ln^{-3}k))\ln k)\nonumber\\
		&\leq&o_k(1/k)-(1-\alpha_0)\cdot\frac{1+2\ln k}{k}\qquad\qquad\mbox{[as $K\leq k^{0.01}$]}\nonumber\\
		&\leq&o_k(1/k)-\frac{1+2\ln k}{k}\qquad\qquad\qquad\qquad\mbox{[as $\alpha_0\leq k^{-2/3}$ by ii.]}
		\label{eqCoverCount12}
	\end{eqnarray}
Further, again because $F_\zeta\in(0,1)$  we have
	\begin{eqnarray}\nonumber
	p_0&\leq&\frac12\sum_{i,j\in\brk k:i\neq j}\bc{1+2\alpha_j d}\exp\brk{-(\alpha_i+\alpha_j)d},\\
		&\leq&O_k(k^{-3+o_k(1)}K)+
			\frac{k(k-1)}2\brk{1+4\ln k+O_k(\ln^{-2}k)}
				\exp\brk{-4\ln k+O(\ln^{-2}k)}
				\quad\mbox{[by condition ii.]}\nonumber\\
		&\leq&\frac{1+4\ln k+O_k(\ln^{-1}k)}{2k^{2}}.
		\nonumber \label{eqCoverCount14a}
	\end{eqnarray}	
Hence,
	\begin{eqnarray}		\label{eqCoverCount14}
	\alpha_0\ln p_0&\leq&\alpha_0\ln\bcfr{1+4\ln k}{2k^{2}}+\alpha_0\cdot o_k(1).
	\end{eqnarray}
Plugging~(\ref{eqCoverCount3}), ~(\ref{eqCoverCount12}) and (\ref{eqCoverCount14}) into~(\ref{eqCoverCount2}), we obtain
	\begin{eqnarray}\nonumber
	\frac1n\ln\pr\brk{\cC_\zeta}&\leq&
		\frac{c}{2k}-(1-2\alpha_0)\ln k-\frac{1+2\ln k}{k}+\alpha_0\ln\bcfr{1+4\ln k}{2k^2}+\alpha_0\cdot o_k(1)+o_k(1/k)\\
		&=&\frac{c-2-4\ln k}{2k}-\ln k+\alpha_0\ln\bcfr{1+4\ln k}{2}+\alpha_0\cdot o_k(1)+o_k(1/k).
		\label{eqCoverCount15}
	\end{eqnarray}

Finally, combining~(\ref{eqCoverCount1}) and~(\ref{eqCoverCount15}), we get
	\begin{eqnarray}\nonumber
	\frac1n\ln(\abs{\cS_{\vec\alpha}}\cdot\pr\brk{\cC_\zeta})&\leq&
		\frac{c-2-4\ln k}{2k}-\alpha_0\ln\bcfr{2k\alpha_0}{1+4\ln k}\\
		&&\quad-(1-\alpha_0)\ln(1-\alpha_0)+\alpha_0\cdot o_k(1)+o_k(1/k)\nonumber\\
		&\leq&\frac{c-2-4\ln k}{2k}+\alpha_0\brk{1-\ln\bcfr{2k\alpha_0}{1+4\ln k}+o_k(1)}+o_k(1/k).
		\label{eqCoverCount16}
	\end{eqnarray}
Elementary calculus shows that the function $\alpha_0\in(0,1)\mapsto -\alpha_0(1-\ln\frac{2k\alpha_0}{1+4\ln k}+o_k(1))$ attains its maximum at
	$\alpha_0=(1+o_k(1))\frac{1+4\ln k}{2k}$.
Hence, (\ref{eqCoverCount16}) yields
	\begin{eqnarray}\label{eqFinal1}
	\frac1n\ln(\abs{\cS_{\vec\alpha}}\cdot\pr\brk{\cC_\zeta})&\leq&\frac{c-1+o_k(1)}{2k}. 
	\end{eqnarray}

To complete the proof, consider for any $\vec\alpha\in A$ the number $\Sigma_{\vec\alpha}$ of $k$-covers $\zeta$ of $\Gnm$ such that
$\abs{\zeta^{-1}(i)}=\alpha_i$ for all $i$.
Then~(\ref{eqFinal1}) implies that
$\frac1n\ln\Erw[\Sigma_{\vec\alpha}]\leq\frac{c-1}{2k}-o_k(1)$ for all $\vec\alpha\in A$.
Hence, there is $0<\eps_k=o_k(1)$ such that for $c<1-\eps_k$ we have
	\begin{eqnarray}\label{eqFinal2}
	\Erw[\Sigma_{\vec\alpha}]\leq\exp\brk{\frac{c-1}{2k}-o_k(1)}\leq\exp(-\eps_k/2)=\exp(-\Omega(n)).
	\end{eqnarray}
Since condition iii.\ ensures that $|A|\leq n^{k}=\exp(o(n))$, the assertion follows from (\ref{eqFinal2}) by taking the
union bound over all $\vec\alpha\in A$ and applying \Lem~\ref{Lemma_independentEdges}.
\qed

\subsection{Proof of \Lem~\ref{Lemma_isCover}}\label{Sec_isCover}

Given $\cI_\zeta$, the pairs $\vec e_1,\ldots,\vec e_m$ that constitute the random graph $G'(n,m)$ are simply
distributed uniformly and independently over the set of all $n^2(1-F_\zeta)$ possible pairs that do not join two vertices
in the same class $V_i$ for $i=1,\ldots,k$.
For each vertex $v$ and each $j\in\cbc{0,1,\ldots,k}$ let
	$d_{v,j}$ be the number of pairs $\vec e_i$ such that $\vec e_i$ contains $v$ together with a vertex from $V_j$.
Clearly, given $\cI_\zeta$ we have $d_{v,j}=0$ for all $v\in V_j$, $j\in\brk k$.

It seems reasonable to expect that the $d_{v,j}$ are asymptotically independent Poisson random variables.
To state this precisely, consider a family $(b_{vj})_{v\in V,j\in\cbc{0,1,\ldots,k}}$ of independent Poisson random variables with means
	$$\Erw\brk{b_{vj}}=\frac{\alpha_j d}{1-F_\zeta}.$$
Let $\cB_\zeta$ be the event that
\begin{enumerate}
\item[i.] for any $v\in V_0$ there exist $i,j\in\brk k$, $i\neq j$ such that $b_{vi}=0$ and $b_{vj}\leq1$ and
\item[ii.] for any $1\leq i<j\leq k$ and any $v\in V_i$ we have $b_{vj}>1$.
\end{enumerate}
The key step in the proof (somewhat reminiscent of the Poisson cloning model~\cite{Kim}) is to establish the following.

\begin{lemma}\label{Lemma_Poisson}
We have $\pr\brk{\cC_\zeta|\cI_\zeta}\leq\exp(o(n))\cdot\pr\brk{\cB_\zeta}$.
\end{lemma}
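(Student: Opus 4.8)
The plan is to compare the two random experiments -- the true conditional graph $G'(n,m)\mid\cI_\zeta$ on one side, and the idealized family of independent Poisson degrees $(b_{vj})$ on the other -- and to show that the event ``$\zeta$ is a $k$-cover'' is (essentially) expressible purely in terms of the local degree statistics $(d_{v,j})_{v\in V,\,j\in\{0,\ldots,k\}}$, so that it suffices to transfer the bound from the Poisson model. Recall that given $\cI_\zeta$ the $m$ edges are independent and uniform over the $n^2(1-F_\zeta)$ ordered pairs that avoid all monochromatic coincidences. Hence the vector $(d_{v,j})$ is exactly the outcome of throwing $m$ balls into bins indexed by (vertex, target-class) incidences, with the appropriate weights; in particular, conditioning on $\cI_\zeta$, the quantities $e(V_i,V_j)$ are multinomially distributed and, given those, the $d_{v,j}$ arise from independent balls-and-bins sub-experiments. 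This is precisely the setup of \Lem~\ref{Lemma_BallsAndBins} and \Cor~\ref{Cor_BallsAndBins}.

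First I would observe that the cover conditions \textbf{CV1--CV3}, restricted to the event $\cI_\zeta$ (which already guarantees \textbf{CV1} since the $V_i$ are independent sets), depend on the random graph \emph{only} through the numbers $d_{v,j}$: condition \textbf{CV2} says that every $v\in V_i$ with $i\in[k]$ has $d_{v,j}\geq 2$ for all $j\in[k]\setminus\{i\}$; condition \textbf{CV3} says that every $v\in V_0$ has some ordered pair $i\neq j$ in $[k]$ with $d_{v,i}=0$ and $d_{v,j}\leq 1$. (Strictly, \textbf{CV2} and \textbf{CV3} also reference neighbors of color $0$, but a vertex in $V_0$ is never stable and $\zeta(v)=0$ vertices are exactly the non-stable ones by definition; the colour-$0$ neighbours play no role in the stability condition, which only quantifies over $j\in[k]$. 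Self-loops and the possibility of an edge landing with both endpoints coloured $0$ are handled by the $o(n)$ slack, exactly as in the earlier lemmas.) Matching this against the definition of $\cB_\zeta$ shows that the event $\cC_\zeta$, under $\cI_\zeta$, is the pullback of $\cB_\zeta$ under the identification $d_{v,j}\leftrightarrow b_{vj}$.

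Next I would run the transfer. Condition on the edge counts $\vec m=(m_{ij})$ between colour classes; by \Lem~\ref{Lemma_fixEdgeCounts}-style reasoning we may restrict to $\vec m$ with $m_{ij}/\nu_i=(1+o_k(1))\alpha_j d/(1-F_\zeta)$, and the contribution of atypical $\vec m$ is absorbed into $\exp(o(n))$. Given $\vec m$, for each ordered colour pair $(i,j)$ with $i\neq j$ the $m_{ij}$ edges from $V_i$ to $V_j$ are thrown uniformly and independently among the $\nu_i$ vertices of $V_i$; these $k(k-1)$ balls-and-bins experiments are mutually independent. \Cor~\ref{Cor_BallsAndBins} then gives, for each such pair, a factor $O(\sqrt{m_{ij}})=\exp(o(n))$ loss when replacing the true (conditioned-sum) bin occupancies by independent Poisson variables of mean $m_{ij}/\nu_i=(1+o_k(1))\alpha_j d/(1-F_\zeta)$; multiplying the $k(k-1)=O_k(1)$ factors keeps the total loss at $\exp(o(n))$. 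Since $\cC_\zeta\cap\cI_\zeta$ is a function of these occupancies alone and matches $\cB_\zeta$ under the Poissonization, we get
\[
\pr\brk{\cC_\zeta\mid\cI_\zeta}\leq\exp(o(n))\cdot\pr\brk{\cB_\zeta}+\pr[\vec m\text{ atypical}\mid\cI_\zeta],
\]
and the second term is $\exp(-\Omega(n/k))=\exp(o(n))$ by the Chernoff bound, yielding the claim.

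The main obstacle I expect is bookkeeping rather than conceptual: making precise that $\cC_\zeta$ truly factors through the $d_{v,j}$ (dealing cleanly with multi-edges, self-loops, and the joker-colour neighbours so that the reduction is exact up to $\exp(o(n))$), and verifying that the \Cor~\ref{Cor_BallsAndBins} losses really do multiply to only $\exp(o(n))$ rather than something that scales with $k$ in a damaging way -- here one uses that the number of colour pairs is $O_k(1)$ (fixed once $k$ is fixed) and each $\sqrt{m_{ij}}\leq\sqrt m=\exp(o(n))$. A secondary point to be careful about is that \textbf{CV3} is an ``exists $i\neq j$'' condition, so one cannot bound it by a product over colours directly; but that is exactly why the lemma only \emph{upper}-bounds $\pr\brk{\cC_\zeta\mid\cI_\zeta}$ by $\exp(o(n))\pr\brk{\cB_\zeta}$ (an equality of the underlying point probabilities followed by summing), which is all that is needed downstream.
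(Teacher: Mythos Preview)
Your high-level plan---reduce $\cC_\zeta$ to a condition on the degree vector $(d_{v,j})$ and Poissonize the underlying balls-and-bins experiments---is the right one and matches the paper's strategy. But the execution has a real gap, not merely bookkeeping.

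The first issue is the Poisson means. \Cor~\ref{Cor_BallsAndBins} gives you, conditional on $\cM_{\vec m}$, independent Poisson variables with means $m_{ij}/\nu_i$; these depend on the particular $\vec m$ and are \emph{not} the means $\alpha_j d/(1-F_\zeta)$ that define $\cB_\zeta$. You write ``$m_{ij}/\nu_i=(1+o_k(1))\alpha_j d/(1-F_\zeta)$'' for typical $\vec m$, but never explain how a $(1+o(1))$ perturbation of the means translates into only an $\exp(o(n))$ perturbation of the \emph{product} event over $n$ vertices. That step is not free.

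The second issue is fatal as written. Your final inequality reads
\[
\pr\brk{\cC_\zeta\mid\cI_\zeta}\leq\exp(o(n))\pr\brk{\cB_\zeta}+\pr\brk{\vec m\mbox{ atypical}\mid\cI_\zeta},
\]
and you then claim the additive term is ``$\exp(-\Omega(n/k))=\exp(o(n))$''. For fixed $k$ that term is $\exp(-\Theta(n))$, not $\exp(o(n))$; but more to the point, an \emph{additive} $\exp(-Cn/k)$ cannot be absorbed into $\exp(o(n))\pr\brk{\cB_\zeta}$ unless $\pr\brk{\cB_\zeta}\geq\exp(-Cn/k)$. That fails in the regime the lemma must cover: for $\alpha_0$ near its allowed maximum $k^{-2/3}$ one has $\alpha_0\ln p_0\sim -2k^{-2/3}\ln k$, so $\pr\brk{\cB_\zeta}=\exp(-\Theta(k^{-2/3}n\ln k))$, which for large $k$ is far smaller than any Chernoff-type atypicality bound $\exp(-Cn/k)$. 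Thus the discarded term dominates and the claimed inequality does not follow.

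The paper sidesteps both problems by \emph{not} restricting to typical $\vec m$. It uses the exact \Lem~\ref{Lemma_BallsAndBins} (rather than the one-sided \Cor~\ref{Cor_BallsAndBins}) with the \emph{target} means $\alpha_j d/(1-F_\zeta)$, obtaining $\pr\brk{\cC_\zeta\mid\cM_{\vec m}}=\pr\brk{\cB_\zeta\mid\cM'_{\vec m}}$ for every $\vec m$. It then introduces the conditioning event $\cV=\{B_{ij}=B_{ji},\ \tfrac12 B_{00}+\sum_{i<j}B_{ij}=m\}$ in the Poisson model, shows $\pr\brk{\cV}=\exp(o(n))$, and computes explicitly that the ratio $\pr\brk{\cM_{\vec m}\mid\cI_\zeta}/\pr\brk{\cM'_{\vec m}\mid\cV}$ collapses (via Stirling) to $m!\exp(m)/m^m\cdot\exp(o(n))=\exp(o(n))$ \emph{uniformly} in $\vec m$. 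Summing over all $\vec m$ then yields the lemma with no additive residue. The missing idea in your outline is precisely this uniform measure comparison; restricting to a typical window and paying an additive price does not suffice here.
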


To prove \Lem~\ref{Lemma_Poisson} we consider a further event.
Set
	$B_{ij}=\sum_{v\in V_i}b_{vj}$ for $i,j\in\cbc{0,1,\ldots,k}$, $(i,j)\neq(0,0)$.
Being sums of independent Poisson variables, the random variables $B_{ij}$ are Poisson as well, with means
	$$\Erw[B_{ij}]=\Erw\brk{B_{ji}}=\alpha_i\alpha_j dn/(1-F_\zeta)\qquad(0\leq i<j\leq k).$$
In addition, let
	$B_{00}$ be a random variable that is independent of all of the above
	such that $\frac12 B_{00}$ has distribution $\Po(\alpha_0^2m/(1-F_\zeta))$.
(In particular, $B_{00}$ takes even values only.)
Now, let $\cV$ be the event that
\begin{enumerate}
\item[i.] $B_{ij}=B_{ji}$ for all $i\neq j$ and
\item[ii.] $\frac12B_{00}+\sum_{0\leq i<j\leq k}B_{ij}=m$.
\end{enumerate}

\begin{lemma}\label{Lemma_PoissonConditioning}
We have $\pr\brk{\cV}=\exp(o(n))$.
\end{lemma}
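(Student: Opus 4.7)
The plan is to establish the matching lower bound $\pr\brk{\cV}\geq\exp(-o(n))$, since the upper bound $\pr\brk{\cV}\leq 1$ is automatic. The starting observation is an independence structure: for distinct unordered pairs $\{i,j\}\neq\{i',j'\}$ the four variables $B_{ij},B_{ji},B_{i'j'},B_{j'i'}$ are functions of disjoint subfamilies of the independent Poisson variables $(b_{vj})$, and $B_{00}$ is independent of everything by construction. Consequently, for any non-negative integers $a_{00}$ and $(a_{ij})_{0\leq i<j\leq k}$ satisfying $a_{00}+\sum_{i<j}a_{ij}=m$, independence yields
\[
\pr\brk{\cV}\geq\pr\brk{\tfrac12 B_{00}=a_{00}}\cdot\prod_{0\leq i<j\leq k}\pr\brk{B_{ij}=a_{ij}}\pr\brk{B_{ji}=a_{ij}}.
\]

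For the individual factors I would invoke the local central limit theorem for Poisson (equivalently, Stirling's formula): for any $\lambda\geq 1$ and integer $a$ with $|a-\lambda|\leq 2\sqrt\lambda$ one has $\pr\brk{\Po(\lambda)=a}\geq c_1/\sqrt\lambda$ for an absolute constant $c_1>0$, while if $\lambda<1$ one simply takes $a=0$ and $\pr\brk{\Po(\lambda)=0}\geq \eul^{-1}$. Writing $\mu_{ij}:=\alpha_i\alpha_j\cdot 2m/(1-F_\zeta)$ for $i<j$ and $\mu_{00}:=\alpha_0^2 m/(1-F_\zeta)$, provided the targets $a_{ij}$ can be chosen within an $O(\sqrt{\mu_{ij}})$-window of $\mu_{ij}$ I obtain
\[
\pr\brk{\cV}\geq\exp(-O(k^2))\cdot\prod_{\mu_{ij}\geq 1}\mu_{ij}^{-1/2}.
\]
Since each $\mu_{ij}\leq m=O(n)$ is polynomially bounded in $n$, $\ln\mu_{ij}=O(\ln n)$, and there are only $O(k^2)$ pairs, the product is $\exp(-O(k^2\ln n))$. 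Because $k$ is constant with respect to the asymptotics in $n$, this is $\exp(-o(n))$, as required.

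The one concrete bookkeeping issue, and where the main care is needed, is verifying that integer targets with the exact constraint $a_{00}+\sum_{i<j}a_{ij}=m$ actually exist inside the local CLT windows. Here I exploit the identity $\mu_{00}+\sum_{i<j}\mu_{ij}=m$, which follows from $\sum_{i=0}^k\alpha_i=1$ together with the defining Poisson means (and which mirrors the mean computation that also underlies \Lem~\ref{Lemma_Poisson}). Rounding each $\mu_{ij}$ down to $\lfloor\mu_{ij}\rfloor$ loses a total of at most $O(k^2)$, and this deficit can be absorbed by incrementing a single ``large'' entry $a_{ij}$ with $i,j\geq 1$. Conditions {\bf Z2}--{\bf Z3} guarantee $\mu_{ij}=\Theta(n\ln k/k)$ for any such pair, which dwarfs $k^2$ once $n$ is large, so the adjusted target stays comfortably in the local CLT window. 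The cases $\alpha_0=0$ (so $\mu_{0j}=0$ and the matching events hold trivially) and $\alpha_0>0$ (where $\alpha_0\geq 1/n$ forces $\mu_{0j}=\Omega(\ln k)\geq 1$ for $k\geq k_0$) are both absorbed into this scheme.
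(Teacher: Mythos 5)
Your proof is correct and follows essentially the same strategy as the paper: lower-bound $\pr[\cV]$ by the probability that each $B_{ij}$ hits an explicitly chosen integer near its mean, factorize by the independence of the $(B_{ij})$, and invoke a local CLT/Stirling bound for Poisson point probabilities, giving a product of polynomially small factors over $O(k^2)$ pairs, which is $\exp(o(n))$ since $k$ is a constant. Your write-up is somewhat more careful than the paper's about the existence of admissible integer targets summing to $m$ and about the small-mean regime, but these are refinements of the same argument rather than a different route.
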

\begin{proof}
Since
	$$\Erw\brk{\frac{B_{00}}2+\sum_{1\leq i<j\leq k}B_{ij}}=\frac{dn}{2(1-F(\zeta))}\brk{\alpha_0^2+\sum_{i\neq j}\alpha_i\alpha_j}=m,$$
there exist integers $\beta_{ij}=\Erw[B_{ij}]+O(1)$ such that $\beta_{ij}=\beta_{ji}$
and $\frac12\beta_{00}+\sum_{0\leq i<j\leq k}\beta_{ij}=m$.
Clearly,
	\begin{equation}\label{eqPoissonConditioning}
	\pr\brk\cV\geq\pr\brk{B_{ij}=\beta_{ij}\mbox{ for all }i,j}=\prod_{i,j}\pr\brk{B_{ij}=\beta_{ij}}.
	\end{equation}
Since $\beta_{ij}=\Erw\brk{B_{ij}}+O(1)$ and $B_{ij}$ is a Poisson variable, Stirling's formula yields $\pr\brk{B_{ij}=\beta_{ij}}=\Omega(n^{-1/2})$.
Therefore, (\ref{eqPoissonConditioning}) implies $\pr\brk\cV\geq\Omega(n^{-(k+1)^2/2})=\exp(o(n))$, as claimed.
\qed\end{proof}

\medskip\noindent{\em Proof of \Lem~\ref{Lemma_Poisson}.}
Let $\vec m=(m_{ij})_{i,j\in\cbc{0,1,\ldots,k}}$ be a family of non-negative integers such that
\begin{enumerate}
\item[a.] $m_{ij}=m_{ji}$ for all $i,j$,
\item[b.] $m_{ii}=0$ for $i\in\brk k$ and
\item[c.] $m_{00}+\sum_{0\leq i<j\leq k}m_{ij}=m$.
\end{enumerate}
Let $\cM_{\vec m}$ be the event that
	$$\sum_{v\in V_0}d_{v0}=2m_{00}\quad\mbox{ and }\quad\sum_{v\in V_i}d_{vj}=m_{ij}\mbox{ for any $0\leq i<j\leq k$.}$$
Analogously, let $\cM_{\vec m}'$ be the event that
	$$B_{00}=2m_{00}\quad\mbox{ and }\quad B_{ij}=m_{ij}\mbox{ for any $0\leq i<j\leq k$.}$$
	
We claim that for any $\vec m$ that satisfies a.--c.\ above we have
	\begin{equation}\label{eqLemma_Poisson1}
	\pr\brk{\cC_\zeta|\cM_{\vec m}}=\pr\brk{\cB_\zeta|\cM_{\vec m}'}.
	\end{equation}
Indeed, let either $i=j=0$ or $0\leq i<j\leq k$.
Given that $\cM_{\vec m}$ occurs, we can think of the $m_{ij}$ edges that join $V_i$ and $V_j$ as balls and of
the vertices $v\in V_i$ as bins.
Each ball is tossed into one of the bins randomly and independently, and these events are
independent for all $i,j$.
Thus, (\ref{eqLemma_Poisson1}) simply follows from the Poissonization of the balls and bins experiment (\Lem~\ref{Lemma_BallsAndBins}).

To complete the proof, we need to compare $\pr\brk{\cM_{\vec m}|\cI_\zeta}$ and $\pr\brk{\cM_{\vec m}'|\cV}$.
Because under the distribution $\pr\brk{\,\cdot\,|\cI_\zeta}$ the pairs $\vec e_1,\ldots,\vec e_m$ are simply chosen randomly
subject to the constraint that none of them joins two vertices in the same class $V_i$, $i\in \brk k$, we see that
	\begin{equation}\label{eqLemma_Poisson2}	
	\pr\brk{\cM_{\vec m}|\cI_\zeta}=\frac{m!}{m_{00}!\prod_{0\leq i<j\leq k}m_{ij}!}\cdot\bcfr{\alpha_0^2}{1-F_\zeta}^{m_{00}}
			\prod_{0\leq i<j\leq k}\bcfr{2\alpha_i\alpha_j}{1-F_\zeta}^{m_{ij}}.
	\end{equation}
(The factor of $2$ arises because $\vec e_1,\ldots,\vec e_m$ are {\em ordered} pairs.)
Furthermore, because $\cV$ provides that $B_{ij}=B_{ji}$ for all $i,j$, we have
	\begin{eqnarray}\nonumber
	\pr\brk{\cM_{\vec m}'|\cV}&=&\frac{\pr\brk{B_{00}=2m_{00}}\cdot\prod_{0\leq i<j\leq k}\pr\brk{B_{ij}=m_{ij}}}{\pr\brk{\cV}}.
	\end{eqnarray}
Thus, by \Lem~\ref{Lemma_PoissonConditioning}
	\begin{eqnarray}
	\pr\brk{\cM_{\vec m}'|\cV}&=&\exp(o(n))\cdot \pr\brk{B_{00}=2m_{00}}\cdot\prod_{0\leq i<j\leq k}\pr\brk{B_{ij}=m_{ij}}.
	\label{eqLemma_Poisson3}
	\end{eqnarray}
Since for $0\leq i<j\leq k$ the random variables $B_{ij}$ are Poisson with mean $\alpha_i\alpha_j dn/(1-F_\zeta)$, we have
	\begin{equation}\label{eqLemma_Poisson4}
	\pr\brk{B_{ij}=m_{ij}}=\frac{(\alpha_i\alpha_j dn/(1-F_\zeta))^{m_{ij}}}{m_{ij}!\exp(\alpha_i\alpha_j dn/(1-F_\zeta))}
		=\bcfr{2\alpha_i\alpha_j}{1-F_\zeta}^{m_{ij}}
			\frac{m^{m_{ij}}}{m_{ij}!\exp(2\alpha_i\alpha_j m/(1-F_\zeta))}.
	\end{equation}
Similarly,
	\begin{equation}\label{eqLemma_Poisson5}
	\pr\brk{B_{00}=2m_{00}}=\frac{(\alpha_0^2m/(1-F_\zeta))^{m_{00}}}{m_{00}!\exp(\alpha_0^2m/(1-F_\zeta))}
		=\bcfr{\alpha_0^2}{1-F_\zeta}^{m_{00}}
			\frac{m^{m_{00}}}{m_{00}!\exp(\alpha_0^2 m/(1-F_\zeta))}.
	\end{equation}
Combining~(\ref{eqLemma_Poisson2})--(\ref{eqLemma_Poisson5}), we obtain from Stirling's formula
	\begin{eqnarray}\nonumber
	\frac{\pr\brk{\cM_{\vec m}|\cI_\zeta}}{\pr\brk{\cM_{\vec m}'|\cV}}&=&
		\frac{m!\exp(m(1-F_\zeta)^{-1}(\alpha_0^2+2\sum_{0\leq i<j\leq k}\alpha_i\alpha_j))}{\exp(o(n))m^m}\\
		&=&\frac{m!\exp(m+o(n))}{m^m}=\exp(o(n)).\label{eqLemma_Poisson6}
	\end{eqnarray}

Finally, combining~(\ref{eqLemma_Poisson1}) and~(\ref{eqLemma_Poisson6}) we conclude that for any ${\vec m}$ that satisfies a.--c.\ we have
	\begin{eqnarray*}
	\pr\brk{\cC_\zeta\cap\cM_{\vec m}|\cI_\zeta}&=&\pr\brk{\cC_\zeta|\cM_{\vec m}}\cdot\pr\brk{\cM_{\vec m}|\cI_\zeta}
				\qquad\qquad\mbox{ [as $\cM_{\vec m}\subset\cI_\zeta$]}\\
		&=&\exp(o(n))\pr\brk{\cB_\zeta|\cM_{\vec m}'}\cdot\pr\brk{\cM_{\vec m}'|\cV}\\
		&=&\exp(o(n))\pr\brk{\cB_\zeta\cap \cM_{\vec m}'}/\pr\brk{\cV}\\
		&=&\exp(o(n))\pr\brk{\cB_\zeta\cap\cM_{\vec m}'}\qquad\qquad\quad\mbox{[due to \Lem~\ref{Lemma_PoissonConditioning}].}
	\end{eqnarray*}
Summing over all possible ${\vec m}$ completes the proof.
\qed

\medskip\noindent{\em Proof of \Lem~\ref{Lemma_isCover}.}
We are going to bound the probability of the event $\cB_\zeta$.
For $v\in V_0$ we have
	\begin{eqnarray*}
	\pr\brk{\exists 1\leq i<j\leq k:b_{vi}=b_{vj}=0}&\leq&\sum_{1\leq i<j\leq k}\pr\brk{b_{vi}=b_{vj}=0}
			+\sum_{i,j\in\brk k:i\neq j}\pr\brk{b_{vi}=0,\,b_{vj}=1}=p_0,
	\end{eqnarray*}
because the $b_{vi},b_{vj}$ are independent Poisson variables.
Similarly, if $v\in V_i$ for some $i\in\brk k$, then
	\begin{eqnarray*}
	\pr\brk{\forall j\in\brk k\setminus\cbc i:b_{vj}>1}&=&\prod_{j\in\brk k\setminus\cbc i}1-\pr\brk{b_{vj}\leq1}=p_i.
	\end{eqnarray*}
Due to the mutual independence of the $b_{vj}$, we thus obtain
	$\pr\brk{\cB_\zeta}=
			p_0^{\alpha_0 n}\prod_{i=1}^k p_i^{\alpha_i n}.\label{eqLemma_isCover1}$
Finally, the assertion follows from \Lem~\ref{Lemma_Poisson}.
\qed

\end{document}